\documentclass[11pt,a4paper,reqno]{amsart}

\usepackage{a4wide}

\usepackage[T2A,T1]{fontenc}
\newcommand{\CD}{\mbox{\usefont{T2A}{\rmdefault}{m}{n}\CYRD}}

\usepackage{bbm}
\usepackage[bbgreekl]{mathbbol}

\makeatletter
\def\amsbb{\use@mathgroup \M@U \symAMSb}
\makeatother

\usepackage[english]{babel}

\usepackage{graphicx}
\usepackage{xcolor}
\usepackage{indentfirst}

\usepackage[shortlabels]{enumitem}
\usepackage{todonotes}
\usepackage{comment}
\usepackage{csquotes}

\usepackage[
    style=alphabetic,
    backend=biber,
    maxbibnames=20,
    url=false
]{biblatex}

\DeclareFieldFormat[article]{title}{\mkbibemph{#1}}

\DeclareFieldFormat[book]{title}{\mkbibemph{#1}}

\DeclareFieldFormat[article]{journaltitle}{#1}

\DeclareFieldFormat[article]{volume}{\mkbibbold{#1}}

\usepackage{braket}
\usepackage{mathtools}
\usepackage{amsmath,amssymb,amsfonts,amsthm}
\usepackage{mathrsfs}

\usepackage{hyperref}

\newtheorem{theorem}{Theorem}[section]
\newtheorem{corollary}[theorem]{Corollary}
\newtheorem{lemma}[theorem]{Lemma}
\newtheorem{proposition}[theorem]{Proposition}
\newtheorem{remark}[theorem]{Remark}

\theoremstyle{definition}
\newtheorem{definition}[theorem]{Definition}

\newtheorem{example}{Example}

\DeclarePairedDelimiter\abs{\lvert}{\rvert}%
\DeclarePairedDelimiter\norm{\lVert}{\rVert}%

\makeatletter
\let\oldabs\abs
\def\abs{\@ifstar{\oldabs}{\oldabs*}}
\let\oldnorm\norm
\def\norm{\@ifstar{\oldnorm}{\oldnorm*}}
\makeatother
\usepackage{multirow}
\usepackage{import}
\usepackage{xifthen}
\usepackage{pdfpages}
\usepackage{transparent}
\usepackage{float}

\usepackage{quiver}
\usepackage{tikz}
\usepackage{tikz-cd}

\numberwithin{equation}{section}  % This is the key command

\usetikzlibrary{
  arrows.meta,
  positioning
}

\usepackage{tikz}
\usetikzlibrary{arrows.meta,positioning}

\makeatletter
\newcommand*{\rom}[1]{\expandafter\@slowromancap\romannumeral #1@}
\makeatother
 
\begin{document}

\title[Spectral Measure for Multivariate Dynamical Extremes]
{Spectral Measure and Tail Process for Multivariate Dynamical Extremes}

\author[A. Corveddu]{Alessio Corveddu}

\thanks{The author was partially supported by CMUP, member of LASI, which is financed by national funds through FCT – Fundação para a Ciência e a Tecnologia, I.P., under the projects with reference UID/00144/2025 and associated DOI given by \href{https://doi.org/10.54499/UID/00144/2025}{https://doi.org/10.54499/UID/00144/2025}.
The author also acknowledges the support provided by an FCT scholarship with Reference Number UI/BD/154808/2023.}

\date{\today}

\keywords{Multivariate extremes, point processes, extremal index, tail process, spectral measure} 
\subjclass[2020]{37A50, 37A25, 37B20, 60G70, 60G55}

\begin{abstract}
We study the asymptotic behavior of extreme events in multivariate stationary sequences, with particular emphasis on stochastic processes generated by dynamical systems. We develop a general framework for describing the dependence structure of extremes through two complementary objects: the tail process and the spectral measure.
We first extend the enriched point-process convergence of the univariate theory to the multivariate setting, proving that the decorated point process of exceedances converges to a marked Poisson cluster process whose marks retain the componentwise and temporal structure of extreme clusters. We then construct a spectral measure for stationary sequences, which assigns weight to distinct extremal profiles and characterizes the asymptotic dependence between components, and establish a de Haan-Resnick type representation. We also relate the spectral measure to the tail process, showing that its normalized profile distribution is recovered from the angular component of the tail process. Finally, we recover the classical univariate and multivariate exceedance processes as special cases. We illustrate the theory through applications to dynamical systems, using examples in which different dynamical mechanisms give rise to distinct extremal clustering patterns and cross-dependence structures. These applications demonstrate the ability of the proposed framework to capture genuinely multivariate features of extreme events and to extract information about the underlying dynamics that is not accessible through classical exceedance processes.
\end{abstract}

\maketitle 
\tableofcontents

\section{Introduction}

The study of extreme events is of critical importance in a wide range of disciplines, where their occurrence can cause significant disruption. Examples include natural hazards such as earthquakes, storms, droughts, and pandemics, as well as human-made disasters such as industrial and transport accidents, oil spills, nuclear events, and financial crises. These phenomena often correspond to rare, abnormal states of dynamical systems, namely states in which orbits enter small critical regions of the phase space associated with extreme configurations. There are two principal approaches to analyzing such behavior: the study of limiting distributions of maxima of dynamically generated stochastic processes, and the study of point processes that count the number and intensity of high observations within a given time interval (see, e.g., Freitas et al.~\cite{freitas2020complete,freitas2025enrichedfunctionallimittheorems}).

A more recent line of inquiry extends the analysis of extreme events to the multivariate setting, where one considers events in which at least one component of a multivariate random sample attains an exceptionally high value. While the behavior of individual components is well understood within univariate extreme value theory, the central challenge in the multivariate case is to characterize the dependence between components and to understand how this dependence manifests itself in the limit. In~\cite{aimino2026multivariate}, the authors laid the foundations of multivariate extreme value theory for dynamical systems by proving the existence of a multivariate limiting extreme value law.

The present work follows the second approach. We develop a general theory of multivariate rare-event point processes (REPPs) for stationary sequences, which captures simultaneously the temporal clustering of extremes and their cross-sectional dependence.

We emphasise that, although several works in the dynamical systems literature have considered higher-dimensional observables or multidimensional point processes, a genuinely multivariate perspective on extremes has not previously been adopted in this setting. In some of these works, vector-valued observables are reduced to a univariate problem~\cite{faranda2018correlation}; in others, one considers a rare event together with a (possibly $\mathbb{R}^d$-valued) mark measuring its strength, and studies the convergence of the associated spatio-temporal point processes~\cite{pene2020spatio,zweimuller2022hitting,bansard2026fractional}. Unlike the latter, where the dependence between components of the marks is not the primary object of study, our framework places this dependence at its centre. Here, by contrast, the dependence between components is the central object of study. To the best of our knowledge, no such framework for stationary sequences with temporal dependence is currently available in the literature, and dynamical systems provide an important class of examples to which the theory applies.

Our first main contribution is a convergence theorem for multivariate REPPs. We extend the decorated point-process framework of~\cite{pedersen2018tail,freitas2025enrichedfunctionallimittheorems}, in which each exceedance time is tagged with a bi-infinite sequence describing the evolution of the process within the associated cluster, to the multivariate setting. We prove that the multivariate REPP converges to a limiting Poisson cluster process for stationary sequences admitting a tail process. In contrast to~\cite{pedersen2018tail}, we impose no prescribed tail behavior on the marginal distributions (such as regular variation). The limiting process records both the temporal structure of extreme clusters and the components contributing to each exceedance, and thus retains the extremal dependence across components.

The dependence structure of a multivariate stationary sequence can be described through its copula~\cite{aimino2026multivariate}. For i.i.d.\ sequences, an alternative description is provided by the spectral measure, a measure supported on the unit sphere (or a suitable angular space) that assigns weight to the \emph{extremal profiles}, i.e., the configurations of the components that lead to joint exceedances~\cite{segers2012max,beirlant2006statistics}. The spectral measure is intimately linked to the stable tail dependence function through the classical de~Haan--Resnick representation~\cite{de1977limit}.

Our second main contribution is the construction of a spectral measure for stationary sequences with temporal dependence. Assuming that a limiting law for the maximum exists, we prove the existence and uniqueness of the spectral measure and establish an analogue of the de~Haan--Resnick representation. We further establish a precise connection between the spectral measure and the tail process, the limiting object describing the temporal evolution of extremes within clusters: the spectral measure arises as a projection of the tail process that retains only its angular information. Conversely, the spectral measure describes how the tail process decomposes according to the extremal profiles it reaches. This yields an explicit representation of the multivariate REPP in the dynamical setting, in which the limiting point process splits into contributions from the distinct extremal profiles.

The two objects thus provide complementary descriptions of extremal dependence. The tail process captures both the cross-dependence between components and the temporal dependence within an extremal cluster, whereas the spectral measure aggregates these contributions into a finite measure on the extremal profiles and, through the de~Haan--Resnick representation, specifies how these profiles combine to form the cross-dependence structure. The spectral measure therefore offers a more tractable description of the cross-dependence, while the tail process retains the temporal information needed to resolve the full cluster structure.

The paper is organized as follows. Section~2 reviews the notions from multivariate extreme value theory used throughout, including the stable tail dependence function and the multivariate extremal index, and recalls the main result of~\cite{aimino2026multivariate} for multivariate dynamical systems. Section~3 develops the general multivariate point-process framework for stationary sequences, and Section~4 establishes the corresponding spectral description and its relation to the tail process. Section~5 presents a large class of dynamical systems whose generated sequences fall within the theory. These include non-uniformly expanding systems whose induced maps satisfy the hypotheses of the main results; for these, we show how to modify the constructions of the previous sections. Finally, Section~6 derives several consequences of the point-process convergence. In particular, we recover the previously established univariate theory as a special case and obtain an analogue of the compound-Poisson convergence result of~\cite{nandagopalan1994multivariate} for multivariate processes of exceedances, viewed as random measures on $\mathbb{N}^d$.
\section{Multivariate Extreme Value Law}
Let us consider a dynamical system \((\mathcal{X}, \mathcal{B}_{\mathcal{X}}, \mu, f)\) where \(\mathcal{X} \) is a compact manifold, \(\mathcal{B}_\mathcal{X} \) is its Borel \(\sigma\)-algebra, \(f\) is a measurable map and \(\mu \) is an invariant measure. Let \(d, m \geq 1\) be integers and let \(\bbpsi\colon \mathcal{X} \to (\mathbb{R}^m)^d\) be an observable and define the associated stochastic process \(\mathbf{X} _0, \mathbf{X} _1,\dots \) as
\begin{equation}
	\label{eq:sequence-n}
	\mathbf{X} _n \coloneqq \bbpsi \circ f^n
\end{equation}
we will use the notation \(\mathbf{X} _n = (X _{n,1}, \dots, X _{n,d})\) and \(\bbpsi = (\psi_1,\dots, \psi _d)\) when we need to refer to single components. 
For an asymptotic frequency vectors \(\bbtau = (\tau_1 ,\dots, \tau_d)\) we consider a sequence of normalizing vectors \(\mathbb{u}_n(\bbtau) = (u_{n,1}(\tau_1), \dots, u_{n,d}(\tau_d))\) such that
\begin{equation}
	\label{eq:freq-univ}
	\lim_{n \to \infty} n \mu (\abs{X_{0,j}} > u_{n,j}(\tau_j)) = \lim_{n \to \infty} n \mu(\{x \in \mathcal{X} : |\psi_j(x)| > u_{n,j}(\tau_j)\}) = \tau_j
\end{equation}
With a slight abuse of notation, we write \(\abs{\cdot}\) as the Euclidean norm on \(\mathbb{R}^m\) or \(\mathbb{R} \), and for a vector \(\mathbf{X} = (X_{1}, \dots, X_{d}) \in (\mathbb{R}^m)^d\) we write \(\abs{\mathbf{X}} \coloneqq (\abs{X_{1}}, \dots, \abs{X_{d}}) \in [0,\infty)^d\).
For vectors, the notation \(\mathbf{a} \geq \mathbf{b} \) has to be read as componentwise, so \(a_i \geq b_i\).     

Define the componentwise maximum over the first \(n\) observations as
\begin{equation}
	\label{eq:evl}
	\mathbb{M}_n = \left( \bigvee_{i = 0} ^{n-1}  \abs{X_{i,1}}, \dots,  \bigvee_{i=0} ^{n-1}  \abs{X _{i,d}}  \right) = (M_{n,1}, \dots, M_{n,d}).
\end{equation}

In the multivariate setting, the limiting distribution of \(\mathbb{M}_n\)  characterizes not only the marginal tail behavior but also the extremal dependence among the components. We recall the main definitions and results that will be used in the sequel; see also~\cite{aimino2026multivariate}.

Given \(\bbtau \in (0,\infty )^d\), admitting a \textit{multivariate extreme value law} refers to finding a  multivariate extreme value distribution function \(H\)  supported on \([0,1]^d\) such that
\[
	\lim_{n \to \infty} \mu\left( \mathbb{M}_n \leq \mathbb{u}_n (\bbtau) \right) = H(\bbtau )
\]

As said before, the main interest of multivariate analysis is understanding the dependence between the components. In order to study the limit above, the most important tool will be the \textit{stable tail dependence function} associated to the \textit{copula} of the vector \((\mathbf{X} _n)_{n \geq 0}\).

\begin{definition}
	Let \(\mathbf{Y}  = (Y_1,\dots, Y_d)\)  be a random vector defined on a probability space with d.f.\ \(\mathbb{F} \). We define the copula \(C_ \mathbb{F} \) as a multivariate d.f.\ supported on \([0,1]^d\) and such that  \(C_ \mathbb{F} (\mathbb{t} ) = \mathbb{P} \left( F_1(Y_1) \leq t_1 ,\dots, F_d (Y_d) \leq t_d\right)\),  where \((F_j)_j\) denote the marginals of \(\mathbb{F}\).
\end{definition}

The copula describes the dependence structure among the marginal distributions within the joint law. Regarding the analysis of the multivariate extreme value law and related properties, we will focus on an object that allows us to recover the copula, since it describes it completely, while at the same time appearing explicitly in such a law. We therefore introduce the stable tail dependence function as follows
\begin{definition}
	The stable tail dependence function associated to an extreme copula is defined for \(\bbtau \in [0, \infty) ^d\) as
	\[
		\Gamma(\bbtau) \coloneqq  -\log C(e^{- \bbtau})
	\]
\end{definition}
Let us denote by \(\hat{\mathbf{X} }_0, \hat{\mathbf{X} }_1,\dots \) the associated i.i.d.\ sequence of random vectors with \(\hat{\mathbf{X} }_0(x) = \bbpsi(x)\) and define the analogous component-wise maxima vector and multivariate extreme value d.f.\ \(\hat{H}\). Similarly to the univariate case, we can show that existence of the limit in~\eqref{eq:evl} is equivalent to the existence of the limit 
\begin{equation}
	\label{eq:freq}
	\lim_{n \to \infty} n \mu (\abs{\mathbf{X}_0} \nleq  \mathbb{u}_n (\bbtau)) = \lim_{n \to \infty} n \mu \left(\bigcup_{j=1}^{d}\set{\abs{X _{0,j}} > u_{n,j} (\tau_j)} \right) = \hat{\Gamma} (\bbtau),
\end{equation}
and in that case \(\hat{H}(\bbtau) = e^{-\hat{\Gamma}(\bbtau)}\), where \(\hat{\Gamma }(\bbtau)\) is the average number of large observations among the first \(n\) observations. 

Moreover, the following properties hold
\begin{equation}
	\label{eq:g_prop}
	\hat{\Gamma}(c \bbtau) = c \hat{\Gamma}(\bbtau), \quad \forall c > 0;
\end{equation}
\begin{equation}
	\label{eq:g-prop2}
	\max\{\tau_1, \dots ,\tau_d\} \leq \hat{\Gamma}(\bbtau) \leq \tau _1 + \dots + \tau_d.
\end{equation}

In the stationary case, convergence in distribution of maxima always requires some conditions that allow us to establish some kind of asymptotic independence of our stationary sequence and some that avoid accumulations of clusters - \textit{i.e.} groups of high observations - in time. For \(q \in \mathbb{N} \) and \(\bbtau \in [0,\infty )^d\) we set
\[
	A_n ^{(q)} (\bbtau) \coloneqq \set{\abs{\mathbf{X}_0}\nleq\mathbb{u}_n(\bbtau)} \cap f^{-1}(\set{\mathbb{M}_q \leq \mathbb{u}_n(\bbtau)})
\]
and \(U_n(\bbtau) = A_n ^{(0)} = \set{\abs{\mathbf{X}_0}\nleq\mathbb{u}_n(\bbtau)}\).  
Moreover, for \(B \in \mathcal{B}_{\mathcal{X} }\) an event, we write
\[
	\mathscr{W}_{s,l}(B) \coloneqq \bigcap_{i=s}^{i = s +\max\{l-1,0\}}f^{-i}(B^c), \quad \mathscr{W}_{s,l} ^c (B) = (\mathscr{W}_{s,l}(B))^c
\]
Now we state the two main conditions we will use in what follows\\

\textbf{Condition} \(\mathcal{D}(\mathbb{u} _n)\). We say that \(\mathcal{D} (\mathbb{u} _n (\bbtau))\) holds for the sequence \(\mathbf{X} _0 , \mathbf{X} _1, \dots \) if for every \(l,t,n \in \mathbb{N} \) and \(q \in \mathbb{N}_0\),
\[
	\abs{\mu \left( A_n ^{(q)} (\bbtau) \cap \mathscr{W}_{t,l} \left( A_n ^{(q)} (\bbtau)\right)\right) - \mu \left( A_n ^{(q)} (\bbtau) \right) \mu \left( \mathscr{W}_{0,l} \left( A_n ^{(q)}(\bbtau)\right)\right)} \leq \gamma(q,n,t),
\]
where \(\gamma(q,n,t)\) is decreasing in \(t\) for each \(q,n\) and, for every \(q \in \mathbb{N}_0\) there exists a sequence \((t_n)_n\) such that \(t_n = o(n)\) and \(\lim_{q \to \infty}\limsup_{n \to \infty} n\gamma(q,n, t_n) = 0 \)

\vspace{1em}

\textbf{Condition} \(\mathcal{D} ^{\prime} (\mathbb{u}_n)\). We say that \(\mathcal{D} ^{\prime} (\mathbb{u}_n)\) holds for the sequence \(\mathbf{X}_0 , \mathbf{X}_1,\dots  \) if there is a sequence \((k_n)_{n \geq 0}\) such that
\begin{equation}
	\label{eq:kn}
	k_n \to \infty \quad \text{and}\quad k_n t_n = o(n),
\end{equation}
and such that, given \(r_n = \lfloor \frac{n}{k_n} \rfloor\)  
\[
	\lim_{q \to \infty} \Delta^{(q)}(\mathbb{u}_n (\bbtau)) \coloneqq \lim_{q \to \infty}\limsup_{n \to \infty}  n \mu \left(U_n(\bbtau) \cap \mathscr{W}_{[q+1, r_n)} ^c(U_n(\bbtau))\right) = 0.
\]
This last condition uses a double limit as the original from Leadbetter, as opposed to the one used in~\cite{freitas2025enrichedfunctionallimittheorems}, with a diverging \(q_n\) sequence.\\

\noindent Although the sequence \((\mathbf{X}_n)_{n \geq 0}\)  defined in~\eqref{eq:sequence-n} is stationary, it is generally not independent. The temporal dependence of extremes is captured by the multivariate extremal index function \(\vartheta (\bbtau)\), introduced in~\cite{nandagopalan1994multivariate}.
In the univariate case, the extremal index appears as the exponent of the exponential law of~\eqref{eq:evl}, i.e. given \(\tau_j \geq 0\) and \(u_{n,j} (\tau_j)\) as in~\eqref{eq:freq-univ} 
\begin{equation}
	\lim_{n \to \infty} \mu (M_{nj} \leq u_{n,j} (\tau_j)) = e^{-\theta_j \tau_j} \quad \forall j=1,\dots, d.
\end{equation}
Indeed, it was shown that the same holds in the multivariate case.

In case each component has a well-defined extremal index \(\theta_j\), we can
consider
\[
	\bbtheta \coloneqq  (\theta_1 ,\dots , \theta_d)
\]
and the \textit{multivariate extremal index function} as
\[
	\vartheta(\bbtau) \coloneqq  \frac{\Gamma(\bbtheta\bbtau)}{\hat{\Gamma}(\bbtau)}
\]
It was shown that \(\vartheta(c \bbtau) = \vartheta(\bbtau)\) for all \(c > 0\), so we also have that

\begin{gather}
	\label{eq:g1}
	\text{(\textit{margins})}\quad \Gamma(\bbtheta e_j) = \vartheta(e_j) \hat{\Gamma}(e_j) = \vartheta(e_j) = \theta_j,\quad \forall j = 1,\dots, d;\\[8pt]
	\label{eq:g2}
	\textit{(1-homogeneity)}\quad \Gamma(c \bbtheta\bbtau) = \vartheta(c \bbtau) \hat{\Gamma}(c \bbtau) = c \vartheta(\bbtau)\hat{\Gamma}(\bbtau) \quad \forall c >0.
\end{gather}

\noindent Now we are ready to state the main result in~\cite[Theorem 2.6]{aimino2026multivariate}.
\begin{theorem}
	Let \(\mathbf{X}_0, \mathbf{X} _1, \dots \) be a stationary multivariate stochastic process generated by~\eqref{eq:sequence-n}, and for \(\bbtau \in [0, \infty )^d \setminus \{\mathbf{0}\} \), let \(\mathbb{u}_n (\bbtau)\) be such that both~\eqref{eq:freq-univ} and~\eqref{eq:freq} hold, for some \(\hat{\Gamma}(\bbtau)\). Assume further that conditions \(\mathcal{D}(\mathbb{u}_n)\) and \(\mathcal{D} ^{\prime} (\mathbb{u}_n)\) hold and that \(\vartheta(\bbtau)\) given by
	\begin{equation}
	\label{eq: eimv}
		\vartheta(\bbtau ) = \lim_{q \to \infty} \lim_{n \to \infty} \frac{\mu\left( A_n ^{(q)} (\bbtau)\right)}{\mu \left( U_n(\bbtau) \right)}
	\end{equation}
	is well-defined. Then we have
	\[
		\lim_{n \to \infty} \mu (\mathbb{M} _{n} \leq \mathbb{u} _{n}(\bbtau)) = e^{-\vartheta(\bbtau) \hat{\Gamma}(\bbtau)} = e^{-\Gamma(\bbtheta\bbtau)}.
	\]
	Moreover, the marginal univariate extremal indices \(\theta_j, j = 1,\dots, d\), can be recovered from \(\vartheta(\bbtau)\) by setting all coordinates of \(\tau \)  equal to 0 except for the \(j\)-th, and \(\Gamma\) satisfies~\eqref{eq:g-prop2}.
\end{theorem}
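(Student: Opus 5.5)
The proof follows the classical blocking argument of Leadbetter, adapted as in the univariate dynamical setting of Freitas et al.\ to the event \(A_n^{(q)}(\bbtau)\) in place of the single-threshold exceedance. The key observation is that \(\set{\mathbb{M}_n \leq \mathbb{u}_n(\bbtau)} = \mathscr{W}_{0,n}(U_n(\bbtau))\), so everything is stated in terms of the first hitting time of the set \(U_n(\bbtau)\) under \(f\). The union structure of \(U_n\) over components plays no special role once \(\mathcal{D}\), \(\mathcal{D}'\) are formulated for \(U_n\) and \(A_n^{(q)}\).

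\textbf{Step 1 (replacing \(U_n\) by \(A_n^{(q)}\)).} Fix \(q\). I show that
\[
\abs{\mu(\mathscr{W}_{0,n}(U_n)) - \mu(\mathscr{W}_{0,n}(A_n^{(q)}))} \leq (q+1)\mu(U_n)\to 0.
\]
The reason is that if some \(f^{-i}U_n\) occurs for \(i<n\), then the last such occurrence is an \(A_n^{(q)}\) event, unless it lies in the final \(q\) times. This needs \(\mathbb{M}_q\) computed along the orbit from \(f(x)\), which matches the definition. Hence it suffices to study \(\mu(\mathscr{W}_{0,n}(A_n^{(q)}))\).

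\textbf{Step 2 (blocking).} Split \([0,n)\) into \(k_n\) blocks of length \(r_n\), separated by gaps of length \(t_n\). The gaps and the remainder cost \(O(k_n t_n \mu(U_n)) = o(1)\) by \eqref{eq:kn}. Condition \(\mathcal{D}(\mathbb{u}_n)\), applied inductively in the standard way (writing \(\mathscr{W}_{0,l+j}=\mathscr{W}_{0,l}\setminus\) hits, and using stationarity), gives
\[
\Bigl|\mu(\mathscr{W}_{0,n}(A_n^{(q)})) - \bigl(1 - r_n\mu(A_n^{(q)})\bigr)^{k_n}\Bigr| \lesssim k_n r_n\gamma(q,n,t_n) + \text{(within-block error)} + o(1).
\]
The within-block error is
\[
k_n\Bigl|\mu(\mathscr{W}^c_{0,r_n}(A_n^{(q)})) - r_n\mu(A_n^{(q)})\Bigr| \leq k_n r_n\,\mu\bigl(A_n^{(q)}\cap \mathscr{W}^c_{1,r_n}(A_n^{(q)})\bigr)
\]
by Bonferroni. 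The term \(k_nr_n\gamma\le n\gamma(q,n,t_n)\) vanishes under \(\lim_q\limsup_n\).

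\textbf{Step 3 (controlling the within-block error).} Points of \(A_n^{(q)}\) have no \(U_n\) in the next \(q\) times, so \(A_n^{(q)}\cap f^{-j}A_n^{(q)}\) for \(j\le q\) is empty. Therefore the error is bounded by
\[
n\mu\bigl(U_n\cap\mathscr{W}^c_{[q+1,r_n)}(U_n)\bigr),
\]
which tends to zero by \(\mathcal{D}'(\mathbb{u}_n)\) after \(\lim_q\). This is the main obstacle: the errors depend on \(q\), and only the double limit kills them. I therefore carry all estimates with \(q\) fixed, take \(\limsup_n\), and only then send \(q\to\infty\). Combining the steps, and using \(n\mu(A_n^{(q)}) = n\mu(U_n)\cdot\mu(A_n^{(q)})/\mu(U_n)\to\hat\Gamma(\bbtau)\vartheta_q\), where \(\vartheta_q\to\vartheta(\bbtau)\) by \eqref{eq: eimv} and \eqref{eq:freq}, gives
\[
(1-r_n\mu(A_n^{(q)}))^{k_n}\to e^{-\vartheta_q\hat\Gamma(\bbtau)}.
\]
Since the left side \(\mu(\mathbb{M}_n\le\mathbb{u}_n)\) does not depend on \(q\), its \(\limsup\) and \(\liminf\) are squeezed between quantities converging to \(e^{-\vartheta(\bbtau)\hat\Gamma(\bbtau)}\).

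\textbf{Step 4 (identification and margins).} The identity \(e^{-\vartheta\hat\Gamma}=e^{-\Gamma(\bbtheta\bbtau)}\) is the definition of \(\vartheta\) together with \eqref{eq:g2}. Setting \(\bbtau=\tau_j e_j\) makes \(U_n\) and \(A_n^{(q)}\) reduce to the univariate events, since thresholds with \(\tau_i=0\) are never exceeded asymptotically (interpreting \(u_{n,i}(0)\) appropriately). The formula then reduces to the univariate one and yields \(\vartheta(e_j)=\theta_j\). For the bound \eqref{eq:g-prop2} on \(\Gamma\), I compare with the margins. The lower bound \(\max_j\theta_j\tau_j\le\Gamma(\bbtheta\bbtau)\) follows from \(\mu(\mathbb{M}_n\le\mathbb{u}_n)\le\mu(M_{n,j}\le u_{n,j})\). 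The upper bound \(\Gamma(\bbtheta\bbtau)\le\sum_j\theta_j\tau_j\) follows from a union bound on the complements in the limit, i.e.\ \(1-e^{-\Gamma}\le\sum(1-e^{-\theta_j\tau_j})\). Then homogeneity scaling \(\bbtau\to c\bbtau\), \(c\to0\), upgrades this to the linear inequality.
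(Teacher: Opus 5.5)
Your argument is correct and is the standard Leadbetter/Freitas--Freitas--Todd blocking proof: swap $U_n$ for $A_n^{(q)}$ at cost $q\mu(U_n)$, factorise over blocks via $\mathcal{D}(\mathbb{u}_n)$, bound the within-block Bonferroni error by $n\mu(U_n\cap\mathscr{W}^c_{[q+1,r_n)}(U_n))$, and let $q\to\infty$ only at the end. This is the blocking technique behind \cite[Theorem 2.6]{aimino2026multivariate}, which the present paper only quotes without proof; the one step to phrase differently is the identification $e^{-\vartheta(\bbtau)\hat\Gamma(\bbtau)}=e^{-\Gamma(\bbtheta\bbtau)}$, which follows not from \eqref{eq:g2} but from Sklar's theorem applied to the limit law (its margins are $e^{-\theta_j\tau_j}$, so $-\log$ of the limit equals $-\log C(e^{-\bbtheta\bbtau})$), and this is what reconciles \eqref{eq: eimv} with the ratio definition $\Gamma(\bbtheta\bbtau)/\hat\Gamma(\bbtau)$.
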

\section{Convergence of multivariate Point Process}
In this section we consider a generic stationary sequence \(\mathbf{X}_0, \mathbf{X}_1, \dots\) taking values in \((\mathbb{R}^m)^d\), and, mimicking the approach developed in~\cite{freitas2025enrichedfunctionallimittheorems}, we extend it to study multivariate extremes. The results from this and the following section hold under conditions \(\mathcal{D}(u_n)\) and \(\mathcal{D}'(u_n)\); in Section~\ref{sec:systems} we present a class of dynamical systems to which these results can be applied.

Throughout this section we adopt the same notation as in~\cite{freitas2025enrichedfunctionallimittheorems}, so the parallelism between the two settings will be clear.

We begin by considering the two-sided shift operator on \(((\mathbb{R}^m)^d)^{\mathbb{Z}}\), defined by
\begin{align*}
\sigma \colon ((\mathbb{R}^m)^d)^{\mathbb{Z}} &\to ((\mathbb{R}^m)^d)^{\mathbb{Z}}, \\
(\sigma(\mathbf{x}))_i &= (\mathbf{x})_{i+1}.
\end{align*}

Next, we identify the different clusters. This step is standard in extreme value theory; indeed, the usual approach for stationary sequences is to partition a sample of \(n \in \mathbb{N}\) observations into blocks of size \(r_n = \lfloor n/k_n \rfloor\), separated by gaps of size \(t_n\). Accordingly, we choose \(t_n\) and \(k_n\) such that
\[
k_n, t_n \to \infty, \qquad k_n t_n = o(n),
\]
as required by condition \(\mathcal{D}'(u_n)\). The role of conditions \(\mathcal{D}\) and \(\mathcal{D}'(u_n)\) is precisely to guarantee that the blocks defined above are asymptotically independent, with each block containing at most one cluster.

With this in place, we introduce the following notation. For every \(i \in \mathbb{N}\), let
\[
    \mathbb{u}_n^{-1}(\mathbf{X}_i) \coloneqq \left(u_{n,1}^{-1}(\abs{X_{i,1}}),\dots, u_{n,d}^{-1}(\abs{X_{i,d}})\right)\in \mathbb{R}^d.
\]
For a vector \(\mathbf{X} \in (\mathbb{R}^m)^d\), let
\[
    \nu(\mathbf{X}) = \left(\frac{X_{1}}{\abs{X_{1}}}, \dots, \frac{X_{d}}{\abs{X_{d}}}\right).
\]
Moreover, for \(u \in \mathbb{R}^d\) and \(v \in (\mathbb{R}^m)^d\) with \(d,m > 0\), we denote by
\[
    (u \odot v)_{i,j} \coloneqq u_i v_{i,j} \quad \forall\, i = 1,\dots,d,\ j = 1,\dots,m
\]
the Hadamard product, which in particular for \(u,v \in \mathbb{R}^d\) reads
\[
    u \odot v = (u_1 v_1, \dots, u_d v_d).
\]
We extend it to the product of a vector \(u \in \mathbb{R}^d\) with \(v \in (({\mathbb{R}^m})^d)^{\mathbb{Z}}\) in the same way:
\[
    (u \odot v)_j \coloneqq (u \odot v_j) \quad \forall\ j \in \mathbb{Z}.
\]

Given the stationary process \((\mathbf{X} _n)_{n \geq 0}\) and assumption $\mathcal{D}^{\prime} (\mathbb{u}_n)$, the information regarding the structure of the clusters is kept in each block of size \(r_n\). We are going to store the information regarding the frequencies of our observations in the following way, by defining for all \(i < j \in \{0,\dots n\}\)

\begin{equation}
	\label{eq:blocks}
	\mathbb{X}_{n} ^{i,j} \coloneqq (\mathbb{u}_n ^{-1} (\mathbf{X}_i) \odot \nu(\mathbf{X}_i) , \dots \mathbb{u}_n ^{-1} (\mathbf{X}_{j-1}) \odot \nu(\mathbf{X}_{j-1}) ), \quad \mathbb{X}_{n,i} \coloneqq \mathbb{X} _n ^{(i-1)r_n, ir_n},
\end{equation}
so that \(\mathbb{X}_{n,i}\)  denotes the \(i\)-th normalized block.\\

\subsection{Clusters space}

We introduce a complete metric space of sequences in which every \(\mathbb{X}_n^{i,j}\) is embedded, allowing us to study point processes on it.

Fix \(d \geq 2\) the number of observables and define the spaces,
\begin{gather}
	l_0 ^{d} = \Set{\mathbf{x}  = (x_j)_{j \in \mathbb{Z} } \in ((\mathbb{R}^m)^d)^\mathbb{Z} \colon  \lim_{\abs{j}  \to \infty} \max_{i = 1,\dots d} \abs{x_{j,i}} =0 }\\
	l_\infty ^{d} = \Set{\mathbf{x}  = (x_j)_{j \in \mathbb{Z} } \in ((\overline{\mathbb{R}^m} \setminus \{\boldsymbol{0}\})^d)^\mathbb{Z}   \colon  \lim_{\abs{j}  \to \infty} \min_{i =1,\dots d}\abs{x_{j,i}} = \infty }.
\end{gather}

We define a metric structure on \(l_0^d\) and transfer it to \(l_\infty^d\) via an isometry \(\mathcal{P} \colon l_\infty^d \to l_0^d\). It is induced by the map \(p \colon \overline{\mathbb{R}^m}\setminus\{\boldsymbol{0}\} \to \mathbb{R}^m\),
\[
p(x) \coloneqq
\begin{cases}
x/\abs{x}^2 & \text{if } x \neq \infty, \\
0 & \text{otherwise,}
\end{cases}
\]
acting componentwise:
\[
\bigl(\mathcal{P}(\mathbf{x})\bigr)_j \coloneqq \bigl( p(x_{j,1}), \dots, p(x_{j,d}) \bigr).
\]
For every \(x_j \in (\mathbb{R}^m)^d\) consider the \(L^\infty\) norm  
\[
	\norm{x_j} \coloneqq \max_{k=1,\dots d}\abs{x_{j,k}}
\]
and then for \(\mathbf{x}  \in l_0 ^d\)
\[
	\norm{\mathbf{x} }_0 \coloneqq \sup_{j \in \mathbb{Z}} \norm{x_j} = \sup_{j \in \mathbb{Z}} \max_{k=1,\dots d} \abs{x_{j,k}}
\]
And we equip \(l_0 ^d \setminus \{\underline{0}\}\) with the complete separable metric given by
\[
	d(\mathbf{x}  ,\mathbf{y} ) \coloneqq (\lVert \mathbf{x}   - \mathbf{y}  \rVert_0  \land 1) \lor \left( \frac{1}{\lVert \mathbf{x}   \rVert_0} - \frac{1}{\norm{\mathbf{y} }_0}  \right)
\]
and a complete metric on \(l_\infty ^d \setminus \{\infty \}\) is then defined via \(\mathcal{P} \)  
\[
	d(\mathbf{x} , \mathbf{y} ) \coloneqq  d(\mathcal{P} (\mathbf{x}  ), \mathcal{P} (\mathbf{y} )) \quad \forall \mathbf{x} , \mathbf{y} \in l_\infty ^d.
\]
The process \(\mathbb{X}_n^{i,j}\) embeds in \(l_\infty^d\) in a simple way: we consider the sequence whose components with indices \(i, i+1, \dots, j-1\) are given by the normalized blocks of observations, while all other components are set to \(\mathbf{\infty}\). That is, for every \(i < j\),
\[
    \mathbb{X}_n^{i,j} \coloneqq \Big( \dots, \mathbf{\infty}, \mathbb{u} _n^{-1}(\mathbf{X}_i) \odot \nu(\mathbf{X}_i), \dots, \mathbb{u} _n^{-1}(\mathbf{X}_{j-1}) \odot \nu(\mathbf{X}_{j-1}), \mathbf{\infty}, \dots \Big).
\]
We define the quotient spaces \(\tilde{l}_0 ^d = l_0 ^d / \sim\) and \(\tilde{l}_\infty ^d = l_\infty ^d / \sim  \) by the relation \(\mathbf{x} \sim \mathbf{y} \) if there is \(k \in \mathbb{Z} \) such that \(\sigma ^k (\mathbf{x}  ) = \mathbf{y} \), where \(\sigma \) is the shift operator, and we denote by \(\tilde{\pi}\) the projection from \(l_\infty ^d \) to \(\tilde{l}_\infty ^d\).

The invertibility of $\mathcal{P}$ allows us to define $\tilde{\mathcal{P}} \colon 
\tilde{l}_\infty^d \to \tilde{l}_0^d$ and pull back the 
complete separable metric to the quotient.

Finally, for every \(i =1,\dots ,d\) we define
\[
	\pi_i \colon l^d _\infty \to  l_\infty
\]
the canonical projection to its \(i\)-th component.

\begin{remark}
	From the definition of the complete metric \(d(\cdot, \cdot)\) we see that a set \(A \subset \tilde{l}_\infty ^d\), \(A\) is bounded iff for every \(\mathbf{x}\in A\) there is \(\epsilon > 0\) such that \(\norm{\tilde{\mathcal{P} }(\mathbf{x} )}_0 > \epsilon  \), i.e. iff there is a \(i = 1,\dots d\) such that \(\inf_{j\in \mathbb{Z} } \abs{x_{ij}} < \frac{1}{\epsilon}\)
\end{remark}

Before introducing the tail process, we describe the objects needed to specify the dependence structure, together with the sets that will be used to verify convergence of the point processes.\\

\noindent Let \( A \in \mathcal{B}^{\mathbb{N}_0, \mathbb{Z}} \) be an event, let \( J \) be an interval contained in \([0, \infty)\). We define

\begin{equation}
	\mathscr{W}_J(A) \coloneqq  \bigcap_{i \in J \cap \mathbb{N}_0} \sigma^{-i}(A^c), \quad
	\mathscr{W}_J^c(A) \coloneqq  (\mathscr{W}_J(A))^c = \bigcup_{i \in J \cap \mathbb{N}_0} \sigma^{-i}(A).
\end{equation}

\noindent Moreover, for \( i, j \in \mathbb{N} \) we set
\( \mathscr{W}_j(A) := \mathscr{W}_{[0,j)}(A), \
\mathscr{W}_{i,j}(A) := \mathscr{W}_{[i,j)}(A) \)\\

\noindent For the event \( A \in \mathcal{B}^{\mathbb{N}_0, \mathbb{Z}} \) and \( j \in \mathbb{N} \),
\begin{align}
	& A^{(j)} := A \cap \sigma^{-1}(A^c) \cap \cdots \cap \sigma^{-j}(A^c), 
\end{align}
and for \( j = 0 \) we simply define \( A^{(0)} := A \).\\
\noindent Let
\begin{multline}
    \label{eq:F-set}
    \mathscr{F} \coloneqq \Bigl\{\left\{\mathbf{x}  = (x_j)_{j \in \mathbb{Z} } \in \left((\mathbb{R}^m)^d\right)^\mathbb{Z}  \colon x _j \in (H_{j,1} \times \dots \times H_{j,d}), j = 0,\dots N \right\} \\
	\colon H_{j,i} \in \mathscr{F}_{\mathcal{V}}\ \forall i = 1,\dots d, \ \forall j = 0,\dots N,\ N\in \mathbb{N} \Bigr\}
\end{multline}
where $\mathscr{F}_\mathcal{V} $ denotes the field generated by sets of the form $[e_{1}, f_{1}) \times \dots \times [e_{m}, f_{m})$. \\

For each $l = 1, \ldots, p \in \mathbb{N}$ suppose
$A_l \in \mathscr{F}$ and $J_l = [a_l, b_l)$,
where $0 \le a_1 < b_1 \le a_2 < b_2 \le \cdots \le a_p < b_p \le 1$.
Then for each $n \in \mathbb{N}$ define
\begin{equation}
	\tilde{J}_{k_n,l} := k_n J_l, \qquad
	J_{n,l} := \left( [(k_n a_l - 1) r_n,\ (k_n b_l + 1) r_n ) \right),
\end{equation}

\begin{equation}
	\label{eq: A-n}
	A_{n,l} \coloneqq \Set{\left(\mathbb{u}_n ^{-1}(\mathbf{X}_j) \odot \nu(\mathbf{X}_j) \right)_{j \in \mathbb{Z} } \in \left((\mathbb{R}^m) ^d\right)^\mathbb{Z} \colon \left(\mathbb{u}_n ^{-1}(\mathbf{X}_j) \odot \nu(\mathbf{X}_j) \right)_j \in A_l}
\end{equation}

\noindent For \(A \in \mathscr{F}\), we define
\begin{equation}
	\label{eq:A-set-0}
	\tilde{A} = \set{\tilde{\mathbf{x} } \in \tilde{l}_\infty \colon \tilde{\pi }^{-1} (\tilde{\mathbf{x} })\cap A \neq \emptyset}; \quad \tilde{\mathscr{J}} = \set{\tilde{A}\colon A \in \mathscr{F}}.
\end{equation}
The sets belonging to $\tilde{\mathscr{J}}$ are fundamental, since, as shown in \cite{freitas2025enrichedfunctionallimittheorems}, they are convergence determining for both weak and weak$^\#$ convergence.
Also define
\begin{gather}
	\label{eq:A-bold}
	\mathbb{A} = \set{\mathbf{x}  \in l_\infty ^d\ \colon \tilde{\pi }(\mathbf{x} )\in \tilde{A}  } = \Set{\mathbf{x} \in l_\infty ^d \ \colon \mathbf{x}  \in \bigcup_{j \in \mathbb{Z}} \sigma^{-j}(A)}\\
	\mathscr{J} = \set{\mathbb{A} \colon A \in \mathscr{F} }.
\end{gather}

\noindent Finally, we describe the sets
\begin{equation}
    U(\bbtau) = \set{\mathbf{x} \in l_\infty^d \colon \mathbf{x}_0 \in L(\bbtau)}
\end{equation}
where \(L(\bbtau) = \set{x \in (\mathbb{R}^m)^d \colon (\abs{x_1}, \dots, \abs{x_d}) \ngeq \bbtau}\) and
\[
    U_n(\bbtau) = \set{\mathbb{u}_n^{-1}(\mathbf{X}_0) \odot \nu(\mathbf{X}_0) \in L(\bbtau)} = \bigcup_{i=1}^{d} \set{\mathbb{u}_{n,i}^{-1}(\abs{\mathbf{X}_{0,i}}) < \tau_i} = \set{\mathbb{u}_n^{-1}(\mathbf{X}_0) \ngeq \bbtau}.
\]

We now slightly modify conditions \(\mathcal{D}(\mathbb{u}_n)\) and \(\mathcal{D}^{\prime}(\mathbb{u}_n)\) to be tailored to these sets, so that in what follows we work with the following conditions.\\

\textbf{Condition} \(\mathcal{D}\). We say that \(\mathcal{D}\) holds for the sequence \(\mathbf{X} _0, \mathbf{X} _1,\dots \) if for every \(m,t,n \in \mathbb{N} \), for every \(J_l\) and \(A_l\) with \(l = 0,\dots, p\) chosen as above, we have 
\[
	\abs{ \mu \left( A_{n,l} ^{(q)}  \cap \bigcap_{l=1}^p \mathscr{W}_{J_{n,l}} \left( A_{n,l} ^{(q)}  \right) - \mu \left( A_{n,l} ^{(q)}  \right) \mu \left(\mathscr{W}_{J_{n,l}} \left( A_{n,l} ^{(q)} \right)\right) \right)} \leq \gamma(q,n,t),
\]
where \(min\{J_{n,l} \cap \mathbb{N}_0\} \geq t\) and \(\gamma(q,n,t)\) is decreasing in \(t\) for each \(q,n\) and, for every \(q \in \mathbb{N}_0\) there exists a sequence \((t_n)_n\) such that \(t_n = o(n)\) and \(\lim_{q \to \infty}\limsup_{n \to \infty} n\gamma(q,n, t_n) = 0 \).\\

As explained in~\cite{freitas2025enrichedfunctionallimittheorems, aimino2026multivariate} condition \(\mathcal{D}\) is weaker than the usual mixing conditions for stationary sequences and that makes it easy to check for many dynamical systems, as it follows from a sufficiently fast decay of correlations. 

\textbf{Condition} \(\mathcal{D} ^{\prime} \). We say that \(\mathcal{D} ^{\prime}\) holds for the sequence \(\mathbb{X}_0 , \mathbb{X}_1,\dots  \) if there is a sequence \((k_n)_n\) such that
\begin{equation}
	k_n \to \infty \quad \text{and}\quad k_n t_n = o(n),
\end{equation}
and such that
\[
	\lim_{q \to \infty} \Delta^{(q)} \coloneqq \lim_{q \to \infty}\limsup_{n \to \infty} n \mu \left( U_n(\bbtau) \cap \mathscr{W}_{[q+1, r_n)} ^c (U_n (\bbtau)) \right) = 0.
\]

\subsection{Transformed anchored tail process}

In this section we define the tail process and the transformed anchored tail process, which are the main objects allowing us to describe the limiting behavior of the point processes to be introduced in the next section. The definition of such a process is similar to the one given in~\cite{pedersen2018tail}, where the authors show it to be equivalent to anisotropic regular variation in each component, a property called vector scaling regular variation (VSRV). It is worth noting that our definition has the advantage of being more general, as it does not require heavy-tailed marginals; however, its existence does not follow from any simpler, readily verifiable property, and must therefore be checked on a case-by-case basis.
Since it has nevertheless been shown to be equivalent to VSRV, our work does not lose any generality by taking the existence of the tail process as a starting point.\\

Before defining it, we fix some simple notation that will be used frequently in the sequel. We write $\bbtau = (\tau_1, \ldots, \tau_d)$ with $\tau_j \in \mathbb{R}_+$ 
for each $j = 1, \ldots, d$, and $\bbtau^* = (\tau, \ldots, \tau)$ with $\tau \in \mathbb{R}_+$,
As in~\cite{freitas2025enrichedfunctionallimittheorems}, we assume the existence of a process \((\mathbf{Y}_j)_{j\in\mathbb{Z}}\in l^d _{\infty }\) satisfying
\begin{enumerate}[(i)]
	\item \(
	      \mathcal{L}\left(\frac{1}{\tau} \mathbb{X}_n ^{r_n+s, r_n+t}\ \middle\vert \ \mathbb{u}_n ^{-1} (\mathbf{X}_{r_n}) \ngeq  \bbtau^*\right) \to \mathcal{L}((\mathbf{Y} _j)_{j = s,\dots t})\quad \text{for all} \  \tau>0, s<t, s,t \in \mathbb{Z};
	      \)
	\item the process \((\Theta_j)_{j\in \mathbb{Z}} \) defined as \(\Theta_j \coloneqq \frac{\mathbf{Y}  _j}{\min _{i = 1,\dots, d} \abs{Y _{0,i}}}\) is independent of \(\min _{i = 1,\dots, d} \abs{Y _{0,i}} \);
	\item \(\lim_{\abs{j} \to \infty} \min_{i = 1,\dots d}\abs{Y _{j,i}} = \infty \) a.s;
	\item \(\mu \left( \inf_{j\leq -1} \min_{i = 1,\dots d}\abs{Y _{j,i}} \geq 1\right) > 0\),
\end{enumerate}
where \(r_n = \lfloor \frac{n}{k_n} \rfloor\) is as in the definition of \(\mathcal{D}\) and \(\mathcal{D}^{\prime} \).

We finally define the transformed anchored tail process conditioning to the fact that \(\mathbf{Y}_0\) marks the beginning of a cluster. To do so
we choose as an anchoring map \(l_\infty ^d \to l_\infty ^d\), the first exceedance map
\[
	A(\mathbf{x}) \coloneqq \min \{j \in \mathbb{Z} : \min_{i = 1,\dots d}\abs{x_{j,i}} < 1\}
\]
Now we define the transformed anchored tail process\footnote{As noted in~\cite[Lemma 3.5.]{basrak2021compound}, if \(\mu \left( \mathbf{Y}  \in l_\infty ^d  \right) = 1\) then any anchoring gives rise to a transformed anchored processes with the same distribution.
} as
\[
	\mathbf{Z} \overset{\mathcal{L}}{=} \mathbf{Y} \mid A(\mathbf{Y} ) = 0
\]

\begin{lemma}
	\label{lemma:uniformY0}
	The random variable \(\min_{i = 1,\dots ,d} \abs{Y_{0,i}} \) is uniformly distributed on \((0,1)\). 
\end{lemma}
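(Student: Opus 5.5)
We read off the law of \(\min_{i}\abs{Y_{0,i}}\) directly from condition (i) with \(s=t=0\), combined with the defining property~\eqref{eq:freq-univ} of the normalising thresholds \(u_{n,i}\).

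The key observation is that the maps \(u_{n,i}^{-1}\) are the inverse frequency functions. So the coordinate \(u_{n,i}^{-1}(\abs{X_{r_n,i}})\) is asymptotically the "frequency level" of the observation. Consequently, for \(0<s\le\tau\), the event \(\{\min_i u_{n,i}^{-1}(\abs{X_{r_n,i}})<s\}\) is exactly \(U_n(\bbtau^*)\) with \(\tau\) replaced by \(s\); that is, \(\{\mathbb{u}_n^{-1}(\mathbf{X}_{r_n})\ngeq \mathbf{s}^*\}\).

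The steps, in order, are as follows.

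\textbf{Step 1.} Fix \(\tau>0\) and \(x\in(0,1]\). Note that \(\abs{\nu(\mathbf{X})_i}=1\), so the norms of the components of \(\mathbb{u}_n^{-1}(\mathbf{X}_{r_n})\odot\nu(\mathbf{X}_{r_n})\) are just the \(u_{n,i}^{-1}(\abs{X_{r_n,i}})\). Condition (i) then gives
\[
\mu\Bigl(\min_i \abs{Y_{0,i}}<x\Bigr)=\lim_{n\to\infty}\frac{\mu\bigl(\mathbb{u}_n^{-1}(\mathbf{X}_{r_n})\ngeq x\bbtau^*\bigr)}{\mu\bigl(\mathbb{u}_n^{-1}(\mathbf{X}_{r_n})\ngeq\bbtau^*\bigr)}
=\lim_{n\to\infty}\frac{n\,\mu(U_n(x\bbtau^*))}{n\,\mu(U_n(\bbtau^*))}.
\]
To pass to this limit we need \(x\) to be a continuity point of the limit law. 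We handle this by first proving the identity for all \(x\) outside a countable set, and then extending it by monotonicity and right-continuity.

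\textbf{Step 2.} Use stationarity to move the time index to \(0\). By~\eqref{eq:freq}, the numerator and denominator converge to \(\hat\Gamma(x\bbtau^*)\) and \(\hat\Gamma(\bbtau^*)\). The homogeneity~\eqref{eq:g_prop} then makes the ratio equal to \(x\). Property~\eqref{eq:g-prop2} guarantees \(\hat\Gamma(\bbtau^*)\ge\tau>0\), so the ratio is well defined.

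\textbf{Step 3.} Conclude \(\mu(\min_i\abs{Y_{0,i}}<x)=x\) for \(x\in(0,1]\). This is the uniform distribution on \((0,1)\) and proves the lemma.

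The main obstacle is Step 1. The difficulty is that the thresholds \(u_{n,i}\) are only asymptotically determined by~\eqref{eq:freq-univ}, so \(u_{n,i}^{-1}\) need not be a genuine inverse. We must therefore justify that \(\{u_{n,i}^{-1}(\abs{X_{0,i}})<x\tau\}\) agrees with \(\{\abs{X_{0,i}}>u_{n,i}(x\tau)\}\) up to sets of measure \(o(1/n)\).

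We would handle this by choosing \(u_{n,i}(\cdot)\) monotone in \(\tau\), which is always possible via generalized inverses of the tail function of \(\abs{X_{0,i}}\). We would then sandwich the event between the events for \(x\tau\pm\varepsilon\), apply~\eqref{eq:freq} at these perturbed levels, and let \(\varepsilon\to0\) using the continuity of \(\hat\Gamma\). That continuity follows from homogeneity together with the Lipschitz-type bounds in~\eqref{eq:g-prop2}, since \(\hat\Gamma\) is also monotone and subadditive in each coordinate.
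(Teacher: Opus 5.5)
Your proof is correct. The paper states this lemma without proof, and your computation \(n\mu(U_n(x\bbtau^*))/n\mu(U_n(\bbtau^*))\to\hat\Gamma(x\bbtau^*)/\hat\Gamma(\bbtau^*)=x\) is the argument its setup takes for granted: it identifies \(\{\mathbb{u}_n^{-1}(\mathbf{X}_0)\ngeq\bbtau\}\) with \(U_n(\bbtau)\) outright and relies on the homogeneity of \(\hat\Gamma\), as in the univariate setting of \cite{freitas2025enrichedfunctionallimittheorems}.

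Your handling of generalized inverses and continuity points is extra rigor the paper skips, and it needs less than you invoke. For the sandwich, homogeneity of \(\hat\Gamma\) along the diagonal suffices, and monotonicity alone extends the identity from a dense set of continuity points (the function \(x\mapsto\mu(\min_i\abs{Y_{0,i}}<x)\) is left-, not right-, continuous). One small nit: condition (i) requires \(s<t\), so project onto index \(0\) from any window containing it rather than taking \(s=t=0\).
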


\begin{proposition}
	\label{prop: tetaY}
	Consider \(\vartheta(\bbtau)\) as in~\eqref{eq: eimv} and \(\mathbf{Y} \) defined above, then given \(\bbalpha \coloneqq (\alpha_1, \dots, \alpha_d)\) with 
	\[
		\alpha_i = \frac{\tau_i}{\tau_1 + \dots + \tau_d} \quad \forall i = 1, \dots, d;
	\] 
	we have 
\begin{equation}
\begin{aligned}
\vartheta(\bbtau)
&= \mu \left(
\bigcap_{i=1}^{d} \bigcap_{j=1}^{\infty}
\set{\abs{Y_{j,i}} > \alpha_i}
\ \middle|\ 
\bigcup_{i=1}^{d} \{\abs{Y_{0,i}} < \alpha_i\}
\right) \\
&= \mu \left(
\bigcap_{i=1}^{d} \bigcap_{j=-\infty}^{-1}
\set{\abs{Y_{j,i}} > \alpha_i}
\ \middle|\ 
\bigcup_{i=1}^{d} \{\abs{Y_{0,i}} < \alpha_i\}
\right).
\end{aligned}
\end{equation}
	Moreover, we also have 
	\begin{equation}
		\vartheta(\bbtau^*) = \vartheta(1,\dots, 1) = \mu  \left(\inf_{j \leq - 1}\min_{i = 1,\dots, d} \abs{Y_{j,i}}\geq 1\right) = \mu  \left(\inf_{j \geq 1}\min_{i = 1,\dots, d} \abs{Y_{j,i}}\geq 1\right). 		
	\end{equation}
\end{proposition}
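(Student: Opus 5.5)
The plan is to rewrite the ratio in~\eqref{eq: eimv} as a conditional probability of a finite-window event, pass to the limit $n\to\infty$ with property~(i) of the tail process, and then let $q\to\infty$. Note the convention of the paper: an exceedance of $\mathbf{X}_j$ in component $i$ corresponds to $u_{n,i}^{-1}(\abs{X_{j,i}})<\tau_i$, i.e.\ to a \emph{small} normalized value. Hence
\[
A_n^{(q)}(\bbtau)=U_n(\bbtau)\cap\bigcap_{j=1}^{q}\bigcap_{i=1}^d\set{u_{n,i}^{-1}(\abs{X_{j,i}})\ge\tau_i},
\]
and $\mu(A_n^{(q)}(\bbtau))/\mu(U_n(\bbtau))$ is the probability of no exceedance at times $1,\dots,q$ given an exceedance at time $0$. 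By stationarity we may shift the time index so that time $0$ becomes time $r_n$, which is the anchor in~(i).

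\emph{Matching the conditioning event.} Set $s=\tau_1+\dots+\tau_d$, so that $\tau_i=s\alpha_i$ with $\alpha_i\le1$. Then
\[
U_n(\bbtau)=\bigcup_i\set{u_{n,i}^{-1}(\abs{X_{0,i}})<s\alpha_i}\subset\set{\mathbb{u}_n^{-1}(\mathbf{X}_0)\ngeq \bbtau^*},\qquad \bbtau^*=(s,\dots,s).
\]
Writing $C_n=\set{\mathbb{u}_n^{-1}(\mathbf{X}_0)\ngeq\bbtau^*}$, we get
\[
\frac{\mu(A_n^{(q)}(\bbtau))}{\mu(U_n(\bbtau))}=\frac{\mu\bigl(A_n^{(q)}(\bbtau)\mid C_n\bigr)}{\mu\bigl(U_n(\bbtau)\mid C_n\bigr)}.
\]
Both numerator and denominator are probabilities of events involving only the coordinates $0,\dots,q$ of $\frac1s\mathbb{X}_n$, namely the events $\set{\abs{\cdot_{j,i}}>\alpha_i}$ and $\set{\abs{\cdot_{0,i}}<\alpha_i}$. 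Property~(i), applied with $\tau=s$ and the window $0\le j\le q$, therefore yields
\[
\lim_{n\to\infty}\frac{\mu(A_n^{(q)}(\bbtau))}{\mu(U_n(\bbtau))}=\mu\Bigl(\bigcap_{i=1}^d\bigcap_{j=1}^{q}\set{\abs{Y_{j,i}}>\alpha_i}\ \Big|\ \bigcup_{i=1}^d\set{\abs{Y_{0,i}}<\alpha_i}\Bigr).
\]
The conditioning event has positive probability, since it contains $\set{\min_i\abs{Y_{0,i}}<\min_i\alpha_i}$, which has positive probability by Lemma~\ref{lemma:uniformY0}.

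\emph{The main obstacle} is justifying this passage to the limit: these sets must be continuity sets for the law of $(\mathbf{Y}_j)_{0\le j\le q}$, i.e.\ we need $\mu(\abs{Y_{j,i}}=\alpha_i)=0$. For $j=0$ and the minimum over components this follows from Lemma~\ref{lemma:uniformY0}. In general I would use~(ii): write $\abs{Y_{j,i}}=\abs{\Theta_{j,i}}\cdot\min_k\abs{Y_{0,k}}$ with the two factors independent and the second uniform, hence continuous. Conditioning on $\Theta$ then shows that atoms can occur only where $\Theta_{j,i}=0$, and that event does not meet the level $\alpha_i>0$. If this fails for some $\bbtau$, I would argue with $\alpha_i\pm\varepsilon$ and use monotonicity in $\varepsilon$ to squeeze.

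\emph{Letting $q\to\infty$ and the backward form.} The events in $q$ are decreasing, so continuity from above gives the first formula with $j$ running over $1,\dots,\infty$. For the backward formula, write $E_n$ for the exceedance event at time $0$. Stationarity gives
\[
\mu(A_n^{(q)})=\mu(\mathscr{W}_{1,q}(E_n))-\mu(\mathscr{W}_{0,q+1}(E_n))=\mu(\mathscr{W}_{0,q}(E_n))-\mu(\mathscr{W}_{0,q+1}(E_n)).
\]
The last difference is the probability of an exceedance at time $q$ with none in $0,\dots,q-1$. Shifting by $-q$, this becomes an exceedance at $0$ with none in $-q,\dots,-1$. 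Repeating the limit argument, now using a window $[-q,0]$ in~(i), gives the second expression.

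\emph{The case $\bbtau^*$.} Finally, $\vartheta(\bbtau^*)=\vartheta(1,\dots,1)$ by the scale invariance $\vartheta(c\bbtau)=\vartheta(\bbtau)$. Rather than passing through the $\alpha$'s, I would apply~(i) directly with $\tau=1$. In that case the conditioning event $\set{\mathbb{u}_n^{-1}(\mathbf{X}_{0})\ngeq\mathbf{1}}$ coincides with $U_n(1,\dots,1)$, so $\mathbf{Y}$ is already conditioned on the exceedance at time $0$, and the same argument gives
\[
\vartheta(1,\dots,1)=\mu\bigl(\inf_{j\ge1}\min_i\abs{Y_{j,i}}\ge1\bigr)=\mu\bigl(\inf_{j\le-1}\min_i\abs{Y_{j,i}}\ge1\bigr).
\]
Here the difference between $\ge$ and $>$ is absorbed by the continuity argument above.
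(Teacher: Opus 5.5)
Your proposal is correct and follows the paper's route. You shift the anchor to time $r_n$, rescale by $\tau_1+\dots+\tau_d$ so that $U_n(\bbtau)$ lies inside the diagonal exceedance event on which property~(i) conditions, pass to the limit $n\to\infty$ in the ratio of two finite-window probabilities, let $q\to\infty$ by monotonicity, and for $\vartheta(1,\dots,1)$ apply~(i) directly with $\tau=1$. Your continuity-set check via~(ii) and Lemma~\ref{lemma:uniformY0}, and your stationarity identity $\mu(A_n^{(q)})=\mu(\mathscr{W}_{0,q}(E_n))-\mu(\mathscr{W}_{0,q+1}(E_n))$ for the backward ($j\le -1$) representations, supply steps the paper's proof leaves implicit; the one slip is that the positivity argument via $\{\min_i\abs{Y_{0,i}}<\min_i\alpha_i\}$ fails when some $\tau_i=0$, and there you should instead use $\mu(U_n(\bbtau))/\mu(C_n)\to\hat{\Gamma}(\bbtau)/\hat{\Gamma}(s\mathbf{1})>0$.
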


\begin{proof}
	By stationarity and the definition of \(\mathbf{Y} \) we have, denoting \(\tau_1 + \dots + \tau _d = \hat{\tau } \),   
	\begin{align*}
		& \vartheta(\bbtau) = \lim_{q \to \infty} \lim_{n \to \infty} \frac{\mu (A_n ^{(q)}(\bbtau))}{\mu (A_n ^{0}(\bbtau))} = \\
		& = \lim_{q \to \infty} \lim_{n \to \infty} \frac{\mu\left(  \mathbb{u}_n ^{-1} (\mathbf{X}_{r_n}) \ngeq \bbtau, \mathbb{u}_n ^{-1} (\mathbf{X} _{r_n +1}) \geq \bbtau, \dots, \mathbb{u}_n ^{-1}  (\mathbf{X}_{r_n +q}) \geq \bbtau\right)}{\mu \left(\mathbb{u}_n ^{-1} (\mathbf{X}_{r_n}) \ngeq \bbtau\right)} = \\ 
		& = \lim_{q \to \infty} \lim_{n \to \infty} \frac{\mu\left( \frac{1}{\hat{\tau}} \mathbb{u}_n ^{-1} (\mathbf{X}_{r_n}) \ngeq \bbalpha \cap \bigcap_{j=1}^{q}  \set{\frac{1}{\hat{\tau }} \mathbb{u}_n ^{-1} (\mathbf{X} _{r_n +j}) \geq \bbalpha} \ \middle\vert \ \mathbb{u}_n ^{-1}(\mathbf{X}_{r_n}) \ngeq \hat{\bbtau}^* \right)}{\mu \left( \frac{1}{\hat{\tau }}\mathbb{u}_n ^{-1} (\mathbf{X}_{r_n}) \ngeq \bbalpha \ \middle\vert \ \mathbb{u}_n ^{-1}(\mathbf{X}_{r_n}) \ngeq \hat{\bbtau}^* \right)}; 
	\end{align*} 
	which by definition of \(\mathbf{Y} \) converges to  (as \(n \to \infty \))
	\begin{align*}
		& \lim_{q \to \infty} \frac{\mu \left(\bigcup_{i=1}^{d} \{\abs{Y_{0,i}} < \alpha_i \}, \bigcap_{i=1}^{d} \bigcap_{j=1}^{q} \set{ \abs{Y_{j,i}} > \alpha_i }\right)}{\mu \left(\bigcup_{i=1}^{d} \{\abs{Y_{0,i}} < \alpha_i \}\right)} = \\ 
		& \lim_{q \to \infty} \mu \left(\bigcap_{i=1}^{d} \bigcap_{j=1}^{q} \set{ \abs{Y_{j,i}} > \alpha_i  } \ \middle\vert \ \bigcup_{i=1}^{d} \{\abs{Y_{0,i}} < \alpha_i \}\right) = 
		\mu \left(\bigcap_{i=1}^{d} \bigcap_{j=1}^{\infty } \set{ \abs{Y_{j,i}} > \alpha_i  } \ \middle\vert \ \bigcup_{i=1}^{d} \{\abs{Y_{0,i}} < \alpha_i \}\right).
	\end{align*}
	
	To prove the other equality for \(\vartheta(1,\dots 1)\) we simply notice that 
	 \begin{align*}
		& \vartheta(\mathbb{1}) = \lim_{q \to \infty} \lim_{n \to \infty} \frac{\mu\left(  \mathbb{u}_n ^{-1} (\mathbf{X}_{r_n}) \ngeq \mathbb{1}, \mathbb{u}_n ^{-1} (\mathbf{X} _{r_n +1}) \geq \mathbb{1}, \dots, \mathbb{u}_n ^{-1}  (\mathbf{X}_{r_n +q}) \geq \mathbb{1}\right)}{\mu \left(\mathbb{u}_n ^{-1} (\mathbf{X}_{r_n}) \ngeq \mathbb{1}\right)} = \\ 
		& = \lim_{q \to \infty} \lim_{n \to \infty} \mu\left(\mathbb{u}_n ^{-1} (\mathbf{X}_{r_n}) \ngeq \mathbb{1} \cap \bigcap_{j=1}^{q}  \set{ \mathbb{u}_n ^{-1} (\mathbf{X} _{r_n +j}) \geq \mathbb{1}} \ \middle\vert \ \mathbb{u}_n ^{-1}(\mathbf{X}_{r_n}) \ngeq \mathbb{1}\right); 
	\end{align*} 
	which converges to \(\mu  \left(\inf_{j \geq 1}\min_{i = 1,\dots, d} \abs{Y_{j,i}}\geq 1\right).\) 
\end{proof}

\begin{proposition}
	\label{prop:gammaY}
	For every \(d \geq 2\), 
	\[
		\hat{\Gamma }(\mathbf{1}) = \frac{d}{1 + \sum _{\substack{m \in \{0,1\}^d \\ \abs{m}>1}} (-1)^{\abs{m}} \mu  \left(\abs{Y_{0,j}} \mathbbm{1}_{m_j = 1} < 1, j = 1,\dots, d \right)} 
	\] 
\end{proposition}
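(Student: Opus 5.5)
The plan is to apply inclusion--exclusion to the event defining \(\hat{\Gamma}(\mathbf{1})\) in~\eqref{eq:freq}, and then convert every intersection term into a conditional probability given the union, which the tail process controls. Set \(\tau_1=\dots=\tau_d=1\) and \(E_{n,j} \coloneqq \set{u_{n,j}^{-1}(\abs{X_{0,j}})<1}\), so that \(U_n(\mathbf{1})=\bigcup_{j=1}^d E_{n,j}\). Write \(\Gamma_n \coloneqq n\mu(U_n(\mathbf{1}))\). By~\eqref{eq:freq}, \(\Gamma_n \to \hat{\Gamma}(\mathbf{1})\), and by~\eqref{eq:freq-univ}, \(n\mu(E_{n,j})\to 1\) for every \(j\).

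\textbf{Step 1: inclusion--exclusion.} For \(m\in\{0,1\}^d\setminus\{0\}\), let \(E_{n,m}\coloneqq\bigcap_{j:\,m_j=1}E_{n,j}\). Then
\[
\Gamma_n=\sum_{\abs{m}=1} n\mu(E_{n,m}) + \sum_{\abs{m}>1}(-1)^{\abs{m}+1} n\mu(E_{n,m}).
\]
The first sum tends to \(d\).

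\textbf{Step 2: rewrite the higher-order terms.} For \(\abs{m}>1\) we have \(E_{n,m}\subset U_n(\mathbf{1})\). Hence
\[
n\mu(E_{n,m})=\Gamma_n\,\mu\bigl(E_{n,m}\,\big|\,\mathbb{u}_n^{-1}(\mathbf{X}_0)\ngeq \mathbf{1}^*\bigr).
\]
By stationarity we may shift the index \(0\) to \(r_n\). Condition (i) in the definition of \(\mathbf{Y}\), with \(\tau=1\) and \(s=t=0\), then gives
\[
\mu\bigl(E_{n,m}\,\big|\,\mathbb{u}_n^{-1}(\mathbf{X}_{r_n})\ngeq\mathbf{1}^*\bigr)\to p_m\coloneqq\mu\bigl(\abs{Y_{0,j}}<1 \text{ for all } j \text{ with } m_j=1\bigr).
\]
This \(p_m\) is exactly the term appearing in the statement.

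\textbf{Step 3: solve for \(\hat{\Gamma}(\mathbf{1})\).} Letting \(n\to\infty\) in Step 1 gives
\[
\hat{\Gamma}(\mathbf{1}) = d - \hat{\Gamma}(\mathbf{1})\sum_{\abs{m}>1}(-1)^{\abs{m}}p_m .
\]
Rearranging,
\[
\hat{\Gamma}(\mathbf{1})\Bigl(1+\sum_{\abs{m}>1}(-1)^{\abs{m}}p_m\Bigr)=d.
\]
To divide, note that the bracket equals \(d/\hat{\Gamma}(\mathbf{1})\). This quantity lies in \([1,d]\) by~\eqref{eq:g-prop2}, because \(\hat{\Gamma}(\mathbf{1})\in[1,d]\) is finite and positive. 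So the bracket is nonzero and the formula follows.

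\textbf{Main obstacle.} The delicate point is the weak-convergence step in Step 2. The sets \(\set{\abs{y_{0,j}}<1,\ j\in m}\) must be continuity sets for the law of \(\mathbf{Y}_0\), which requires \(\mu(\abs{Y_{0,j}}=1)=0\) for each \(j\). I would argue this by a scaling argument. Condition (i) holds for every \(\tau>0\), so the conditional laws at levels \(\tau\) and \(1\) are related by \(\mathbf{Y}\mapsto\tau\mathbf{Y}\) on the corresponding conditioning events. Combined with the ratio of the conditioning probabilities, which by the homogeneity~\eqref{eq:g_prop} of \(\hat{\Gamma}\) is \(\tau\), this shows that the distribution function of \(\abs{Y_{0,j}}\) is continuous, in the same spirit as Lemma~\ref{lemma:uniformY0}. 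Concretely, the map \(\tau\mapsto\mu(\abs{Y_{0,j}}<\tau)\) is determined by the marginal limits and is affine in \(\tau\) on \((0,1]\), hence has no atoms. If this argument turns out to be too delicate, I would instead approximate the level \(1\) from below and above by \(1\pm\epsilon\). For those levels the limits still exist via~\eqref{eq:freq-univ} and~\eqref{eq:g_prop}, and letting \(\epsilon\to0\) sandwiches \(p_m\).
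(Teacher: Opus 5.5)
Your proposal is correct and is essentially the paper's argument: the paper carries out exactly this inclusion--exclusion for $d=2$ (writing the joint exceedance term as $n\mu(U_n(\mathbf{1}))$ times a conditional probability at time $r_n$, passing to $\mathbf{Y}_0$ via condition (i), and solving the resulting linear equation for $\hat\Gamma(\mathbf{1})$), asserts the general case by inclusion--exclusion, and does not discuss continuity sets. Your extra care there is worthwhile, but note that affinity of $\tau\mapsto\mu(|Y_{0,j}|<\tau)$ on $(0,1]$ does not by itself exclude an atom exactly at $1$; that case is handled by your $1\pm\epsilon$ sandwich (since $\lim_n n\mu\bigl(u_{n,j}^{-1}(|X_{0,j}|)\in[1-\epsilon,1+\epsilon)\bigr)=2\epsilon$ by~\eqref{eq:freq-univ}), or by writing $|Y_{0,j}|=|\Theta_{0,j}|\min_i|Y_{0,i}|$, where the factor $\min_i|Y_{0,i}|$ is uniform (Lemma~\ref{lemma:uniformY0}) and independent of $\Theta_{0}$ by condition (ii).
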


\begin{proof}
	We deal with the case \(d = 2\) for simplicity; the general case follows from an inclusion-exclusion argument. By stationarity we have
	\begin{align*}
		&\hat{\Gamma }(\mathbf{1})  = \lim_{n \to \infty} n \mu (\mathbb{u}_n ^{-1} (\mathbf{X}_{0}) \ngeq \mathbf{1} ) = \\
		&= \lim_{n \to \infty}\left( n \mu (X_{r_n,1} > u_{n,1}(1)) + n \mu (X_{r_n,2} > u_{n,2}(1)) - n \mu (X_{r_n,1} > u_{n,1}(1), X_{r_n,2} > u_{n,2}(1)) \right)
	\end{align*}
	From~\eqref{eq:freq-univ} the first two terms converges to 1, so we focus on the last term. We have 
	\begin{align*}
		& \lim_{n \to \infty}  n \mu (X_{r_n,1} > u_{n,1}(1), X_{r_n,2} > u_{n,2}(1)) = \lim_{n \to \infty}  n \mu (X_{r_n,1} > u_{n,1}(1), X_{r_n,2} > u_{n,2}(1) \cap \mathbb{u}_n ^{-1} (\mathbf{X}_{r_n}) \ngeq \mathbf{1} )= \\
		& \lim_{n \to \infty} n \mu (\mathbb{u}_n ^{-1} (\mathbf{X}_{r_n}) \ngeq \mathbf{1} ) \mu (X_{r_n,1} > u_{n,1}(1), X_{r_n,2} > u_{n,2}(1) \mid \mathbf{X}_{r_n} \nleq u_n (\mathbf{1})) ,  
	\end{align*}
	which by definition of \(\hat{\Gamma }\) and \(\mathbf{Y} \) converges to  
	\[
		\hat{\Gamma }(1,1) \cdot \mu \left(\cap_{i = 1,2} \set{\abs{Y_{0,i}} < 1}\right)
	\]  
	so putting everything together we have
	\[
		\hat{\Gamma }(1,1) = 2 - \hat{\Gamma }(1,1) \cdot \mu \left(\cap_{i = 1,2} \set{\abs{Y_{0,i}} < 1}\right)
	\]
	which gives the desired result.
\end{proof}

These proofs are useful because they allow us to express both the temporal and spatial dependence indicators, \(\vartheta(\bbtau)\) and \(\hat{\Gamma}\), solely in terms of the tail process \(\mathbf{y}\). Moreover,

$$
\Gamma(\bbtheta)=\vartheta(\mathbf{1})\hat{\Gamma}(\mathbf{1}).
$$

As we will see below, this quantity represents the intensity of the limiting point process and allows us to express the spectral measure \(H\) as a suitable projection of the tail process.\\

The following results are direct consequences of~\cite{freitas2025enrichedfunctionallimittheorems}, with only minor modifications, since the condition \(\mathcal{D}^{\prime}\) has been relaxed to allow \(q\) to be independent of \(n\). For completeness, we include the proofs, with the necessary adaptations.

\begin{proposition}
	\label{prop 3.11}
	Let \(A_1 \in \mathscr{F} \). Under \(\mathcal{D}^{\prime} \), for \(\mathbb{X}_{n,1}\) as in~\eqref{eq:blocks} we have
	\[
		\lim_{q \to \infty} \lim_{n \to \infty}\vert k_n \mu  (\mathbb{X} _{n,1} \in \mathbb{A} _1) - n \mu  (A_{n,1}^{(q)})\vert  = \lim_{q \to \infty} \lim_{n \to \infty}\vert k_n \mu  (\mathscr{W} ^c _{r_n} (A_{n,1})) - n \mu  (A_{n,1} ^{(q)}) \vert = 0,
	\]
	where \(A_{n,1}\) and \(\mathbb{A}_1\) are defined as in~\eqref{eq: A-n} and~\eqref{eq:A-bold}.
\end{proposition}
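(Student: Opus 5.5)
The first equality holds by definition: \(\mathbb{X}_{n,1}\in\mathbb{A}_1\) means that some shift of the embedded first block lies in \(A_1\). Since \(A_1\in\mathscr{F}\) constrains only finitely many coordinates, this is the event that \(A_{n,1}\) occurs at some time in the first block, i.e. \(\mathscr{W}^c_{r_n}(A_{n,1})\), up to boundary effects at the block edges. These effects involve at most \(N\) indices (\(N\) the length of the cylinder defining \(A_1\)). I would handle them by the convention that coordinates outside the block are \(\infty\), and I expect them to cost only \(O(N)\) terms of order \(\mu(U_n)=O(1/n)\) each, hence \(o(1/k_n)\) after multiplying by \(k_n\). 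So the work is in proving
\[
\lim_{q\to\infty}\lim_{n\to\infty}\bigl| k_n\mu(\mathscr{W}^c_{r_n}(A_{n,1})) - n\mu(A^{(q)}_{n,1})\bigr| = 0 .
\]

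\emph{Step 1 (last-occurrence decomposition).} I decompose the event according to the last occurrence of \(A_{n,1}\) in \([0,r_n)\), and write \(\mathscr{W}^c_{r_n}(A_{n,1}) = \bigcup_{i=0}^{r_n-1}\sigma^{-i}(A_{n,1})\). If \(A_{n,1}\) occurs at \(i\) but not during the next \(q\) steps, then \(\sigma^{-i}(A^{(q)}_{n,1})\) occurs. If every occurrence is followed by another within \(q\) steps, the last one must lie in \([r_n-q,r_n)\). Hence
\[
\mu\bigl(\mathscr{W}^c_{r_n}(A_{n,1})\bigr) \le \sum_{i=0}^{r_n-1}\mu\bigl(\sigma^{-i}A^{(q)}_{n,1}\bigr) + q\,\mu(A_{n,1}) = r_n\mu(A^{(q)}_{n,1}) + O(q/n),
\]
using stationarity and \(\mu(A_{n,1})\le C\mu(U_n)=O(1/n)\). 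The latter holds because \(A_1\) being bounded away from the \(\infty\) sequence forces an exceedance at one of the first \(N\) coordinates. Multiplying by \(k_n\) and using \(k_nr_n\sim n\), \(k_nq/n\to0\), I obtain the upper bound.

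\emph{Step 2 (lower bound by Bonferroni).} I use
\[
\mu\Bigl(\bigcup_i \sigma^{-i}A^{(q)}_{n,1}\Bigr) \ge r_n\mu(A^{(q)}_{n,1}) - \sum_{i<j<r_n}\mu\bigl(\sigma^{-i}A^{(q)}_{n,1}\cap\sigma^{-j}A^{(q)}_{n,1}\bigr).
\]
For \(j-i\le q\) the intersection is empty, or, after accounting for the length-\(N\) cylinder, essentially empty, since \(A^{(q)}\) forbids a return within \(q\) steps. For \(j-i>q\), both events imply an exceedance of level \(\mathbb{u}_n\) at time \(i+\ell\) with \(\ell<N\), followed by another at distance more than \(q-N\). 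By stationarity the sum is therefore bounded by
\[
N r_n\,\mu\bigl(U_n\cap\mathscr{W}^c_{[q+1-N,r_n)}(U_n)\bigr)
\]
(shifting indices, with \(\mathbb{u}_n\) at a level \(\bbtau\) small enough that \(A_1\subset\) the exceedance cylinder). After multiplying by \(k_n\) this is \(\lesssim N\, n\mu(U_n\cap\mathscr{W}^c_{[q+1-N,r_n)}(U_n))\), whose \(\limsup_n\) tends to \(0\) as \(q\to\infty\) by \(\mathcal{D}'\). Since \(N\) is fixed, replacing \(q\) by \(q-N\) is harmless.

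\emph{Main obstacle.} The key step is the second one. It requires reducing pairs of occurrences of a general cylinder event \(A_{n,1}\) to pairs of exceedances \(U_n(\bbtau)\), so that \(\mathcal{D}'\), which is stated only for \(U_n\), applies. This in turn requires choosing \(\bbtau\) so that \(A_1\) is contained in the set where some coordinate among the first \(N\) lies below \(\bbtau\); such a choice is possible because sets in \(\mathscr{F}\) giving bounded \(\tilde A\) are bounded away from \(\infty\). The double limit also needs care: the \(n\)-limit inside is really a \(\limsup\), and the statement should be read with \(\lim_q\limsup_n\).
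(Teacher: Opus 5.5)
\paragraph{A gap in Step 2.} The Bonferroni remainder $\sum_{i<j<r_n}\mu(\sigma^{-i}A^{(q)}_{n,1}\cap\sigma^{-j}A^{(q)}_{n,1})$ is a sum of pairwise probabilities. For fixed $i$, it is the integral over $\sigma^{-i}A^{(q)}_{n,1}$ of the \emph{number} of later times $j\in(i+q,r_n)$ at which $A^{(q)}_{n,1}$ occurs. It therefore dominates, rather than is dominated by, the probability that $A^{(q)}_{n,1}$ occurs at \emph{some} later time.

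The paper's $\mathcal{D}'$ is of union type. It controls $n\mu(U_n\cap\mathscr{W}^c_{[q+1,r_n)}(U_n))$, i.e.\ whether a later exceedance happens, not how many there are. So your bound of the pair sum by $N r_n\,\mu(U_n\cap\mathscr{W}^c_{[q+1-N,r_n)}(U_n))$ does not follow, and it can fail. A block containing $M$ well-separated clusters contributes of order $M$ to the quantity in $\mathcal{D}'$ but of order $M^2$ to your pair sum. Rare blocks carrying many clusters can therefore make $k_n\sum_{i<j}\mu(\cdot)$ diverge while $\mathcal{D}'$ holds and the statement remains true. Your Bonferroni argument would need a summed, Leadbetter-type condition $\lim_q\limsup_n n\sum_{j=q+1}^{r_n-1}\mu(U_n\cap\sigma^{-j}U_n)=0$, which is stronger than what is assumed.

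\paragraph{How to repair it.} Drop the second-order term and use the disjoint last-occurrence decomposition you already have in Step 1; this is the paper's route, following the univariate argument it cites. With $E_i:=\sigma^{-i}A^{(q)}_{n,1}$, use $E_i\cap E_j=\emptyset$ for $0<j-i\le q$, $E_j\subseteq\sigma^{-j}A_{n,1}$, and stationarity to get
\[
\mu\Bigl(\bigcup_{i<r_n}E_i\Bigr)=\sum_{i<r_n}\mu\Bigl(E_i\setminus\bigcup_{i<j<r_n}E_j\Bigr)\ge r_n\mu(A^{(q)}_{n,1})-r_n\,\mu\bigl(A^{(q)}_{n,1}\cap\mathscr{W}^c_{[q+1,r_n)}(A_{n,1})\bigr).
\]
Equivalently, the exact identity $\mu(\mathscr{W}^c_{r_n}(A_{n,1}))=\sum_{i<r_n}\mu(\sigma^{-i}A_{n,1}\cap\mathscr{W}_{[i+1,r_n)}(A_{n,1}))$ gives both bounds at once, namely the paper's estimate
\[
\bigl|k_n\mu(\mathscr{W}^c_{r_n}(A_{n,1}))-n\mu(A^{(q)}_{n,1})\bigr|\le 2qk_n\mu(A_{n,1})+n\mu\bigl(A^{(q)}_{n,1}\cap\mathscr{W}^c_{[q+1,r_n)}(A_{n,1})\bigr).
\]
The error term is now a union-type probability. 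Your reduction from occurrences of $A_{n,1}$ to exceedances of $U_n$, at lags shifted by at most $N$, should be applied to this term. The paper passes over that reduction when it says the term vanishes ``by $\mathcal{D}'$''. The rest of your plan is fine: the block-boundary effects, $\mu(A_{n,1})=O(1/n)$, and reading the inner limit as a $\limsup$.
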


\begin{proof}
	Following the proof of~\cite[Proposition 3.11]{freitas2025enrichedfunctionallimittheorems}, we obtain this bound, which now depends on \(q\) 
	\[
		\vert k_n \mu (\mathbb{X}_{n,1} \in \mathbb{A} _1) - n \mu (A_{n,1}^{(q)}) \vert \leq 2q k_n \mu \left(A_{n,1}\right) + n \mu (A_{n,1} ^{(q)} , \mathscr{W}_{q+1, r_n} ^c (A_{n,1})).
	\]
	The second term of the right-hand side goes to zero by conditions \(\mathcal{D}^{\prime} \), while for the first one we observe that \(A_{n,1} \subseteq \set{\mathbf{X} _0 \nleq  \mathbb{u}_n (\mathbb{h})} \) where  \(\mathbb{h} = (h_1,\dots h_d)\) and  \(h_i \coloneqq \inf \{ \abs{x} \colon x \in H_{0,i}\}\) so that 
	\[
		2q k_n \mu (A_{n,1}) \leq 2q k_n \mu (\mathbf{X} _0 \nleq \mathbb{u}_n (\mathbb{h})) = \frac{2 q k_n}{n} n \mu (\mathbf{X} _0 \nleq \mathbb{u}_n (\mathbb{h})) \sim \frac{2k_n q}{n} \hat{\Gamma }(\mathbb{h} )
	\]
	and this goes to \(0\) when \(n\) goes to \(\infty \), since \(k_n = o(n)\).    
\end{proof}

\begin{corollary}
	\label{SM:cor}
	Under \(\mathcal{D}^{\prime} \),
	\[
		\lim_{n \to \infty} k_n \mu (\mathscr{W}_{r_n} ^c(U_n(\bbtau))) = \Gamma(\bbtheta \bbtau) =\vartheta(\bbtau) \hat{\Gamma}(\bbtau)
		\quad \text{and } \quad \lim_{n \to \infty} \frac{\mu (\mathscr{W}^c _{r_n}(U_n(\bbtau)))}{r_n \mu (U_n(\bbtau))} = \vartheta(\bbtau)
	\]
\end{corollary}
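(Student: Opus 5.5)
Both limits will follow from Proposition~\ref{prop 3.11} applied to the particular set \(A_1\) corresponding to \(U_n(\bbtau)\), i.e.\ \(A_1 = \set{\mathbf{x} \colon x_0 \in L(\bbtau)}\), so that \(A_{n,1} = U_n(\bbtau)\). This set is not literally in \(\mathscr{F}\) because \(L(\bbtau)\) is a complement of a box in the moduli. The bound in the proof of Proposition~\ref{prop 3.11}, however, uses only two facts: that \(A_{n,1}\) is contained in an exceedance event \(\set{\mathbf{X}_0 \nleq \mathbb{u}_n(\mathbb{h})}\), and condition \(\mathcal{D}'\). Here both hold trivially with \(\mathbb{h} = \bbtau\). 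Alternatively, \(L(\bbtau)\) is a finite union of sets in the field \(\mathscr{F}_{\mathcal{V}}\) after intersecting with a bounded region.

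The plan is therefore to repeat that estimate directly:
\[
\bigl| k_n \mu(\mathscr{W}^c_{r_n}(U_n(\bbtau))) - n\mu(A_n^{(q)}(\bbtau)) \bigr| \le 2q k_n \mu(U_n(\bbtau)) + n\mu\bigl(U_n(\bbtau)\cap \mathscr{W}^c_{[q+1,r_n)}(U_n(\bbtau))\bigr).
\]
To obtain it, decompose \(\mathscr{W}^c_{r_n}(U_n)\) according to the last exceedance time \(i\) in \([0,r_n)\). The event that \(i\) is the last exceedance in the block is \(f^{-i}(A_n^{(r_n-1-i)})\), and by stationarity it is compared to \(A_n^{(q)}\). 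The terms with \(i \ge r_n - q\) are controlled by \(q\mu(U_n)\). The discrepancy between \(A_n^{(q)}\) and \(A_n^{(r_n-1-i)}\) is \(\mu(U_n \cap \mathscr{W}^c_{[q+1,r_n)}(U_n))\), multiplied by \(r_n\). Multiplying by \(k_n\) and using \(k_n r_n \le n\) then gives the displayed bound.

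Since \(n\mu(U_n(\bbtau)) \to \hat\Gamma(\bbtau)\) by~\eqref{eq:freq}, the first term is asymptotically \(2qk_n\hat\Gamma(\bbtau)/n \to 0\) for each fixed \(q\). The second term vanishes under \(\lim_q\limsup_n\) by \(\mathcal{D}'\). Next, \(n\mu(A_n^{(q)}(\bbtau)) = n\mu(U_n(\bbtau)) \cdot \mu(A_n^{(q)})/\mu(U_n)\), and by~\eqref{eq:freq} and~\eqref{eq: eimv} this tends to \(\hat\Gamma(\bbtau)\vartheta(\bbtau)\) under \(\lim_q\lim_n\). Because \(k_n\mu(\mathscr{W}^c_{r_n}(U_n))\) does not depend on \(q\), we can first take \(\limsup_n\) and \(\liminf_n\) of it for fixed \(q\), then send \(q\to\infty\). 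Both bounds converge to \(\vartheta(\bbtau)\hat\Gamma(\bbtau)\), and this equals \(\Gamma(\bbtheta\bbtau)\) by the definition of \(\vartheta\).

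For the second limit, write
\[
\frac{\mu(\mathscr{W}^c_{r_n}(U_n))}{r_n\mu(U_n)} = \frac{k_n\mu(\mathscr{W}^c_{r_n}(U_n))}{k_n r_n \mu(U_n)}.
\]
Since \(k_n r_n/n \to 1\), the denominator tends to \(\hat\Gamma(\bbtau)\), and the ratio converges to \(\vartheta(\bbtau)\). If \(\hat\Gamma(\bbtau) > 0\) this is immediate, and that is guaranteed for \(\bbtau \ne \mathbf{0}\) by~\eqref{eq:g-prop2}. The only delicate point is the order of limits: the \(n\)-limit of the left side must exist independently of \(q\). The sandwich argument above settles this, provided the limit in~\eqref{eq: eimv} exists as assumed.
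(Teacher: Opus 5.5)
Your proposal is correct and follows the same route as the paper. Like the paper, you apply the estimate of Proposition~\ref{prop 3.11} to the set $U_n(\bbtau)$ and then combine \eqref{eq:freq} with \eqref{eq: eimv}; your last-exceedance decomposition, the $\liminf/\limsup$ sandwich before sending $q\to\infty$, and the reduction of the second limit to the first via $k_nr_n/n\to 1$ and $\hat\Gamma(\bbtau)>0$ only make explicit what the paper's two-line proof leaves implicit. One small caveat: your aside that $L(\bbtau)$ becomes a finite union of sets of $\mathscr{F}_{\mathcal{V}}$ after intersecting with a bounded region is not accurate for $m\ge 2$, since Euclidean balls are not finite unions of rectangles. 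You do not need it, however, because you re-derive the estimate directly for $U_n(\bbtau)$.
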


\begin{proof}
	By propositions~\ref{prop 3.11},~\eqref{eq: eimv} and~\eqref{eq:freq}
	\begin{align*}
		\lim_{n \to \infty} k_n \mu (\mathscr{W}^c _{r_n} (U_n(\bbtau))) = \lim_{q \to \infty} \lim_{n \to \infty} n\mu (U_n(\bbtau)^{(q)}) & = \lim_{q \to \infty} \lim_{n \to \infty} n \mu (U_n (\bbtau)) \frac{\mu (U_n ^{(q)}(\bbtau))}{\mu (U_n(\bbtau))}= \\
		& = \vartheta(\bbtau) \hat{\Gamma }(\bbtau) = \Gamma(\bbtheta \bbtau)
	\end{align*}
	and
	\begin{align*}
		\lim_{n \to \infty} \frac{\mu (\mathscr{W}^c _{r_n}(U_n(\bbtau)))}{r_n \mu (U_n(\bbtau))} = \lim_{q \to \infty} \lim_{n \to \infty} \frac{\mu(U_n ^{(q)}(\bbtau))}{\mu (U_n(\bbtau))} = \vartheta(\bbtau)
	\end{align*}
\end{proof}

The following result shows how the transformed tail process describes what happens within a cluster.

\begin{proposition}
	\label{prop:gen-Z}
	Under the assumptions used to define the transformed anchored tail process and condition \(\mathcal{D}^{\prime} \), for every \(\tau >0\) and \(\bbtau^* = (\tau ,\dots \tau )\) and for \(\mathbb{X}_{n,1}\) as in~\eqref{eq:blocks} we have
	\[
		\mathcal{L} \left(\tilde{\pi }\left(\frac{1}{\tau }\mathbb{X}_{n,1}\right) \middle\vert \ \mathbb{X}_{n,1}\in \mathscr{W}^c _{r_n}(U(\bbtau^*)) \right) \to \mathcal{L} (\tilde{\pi }(Z))
	\]
	as \(n \to \infty \).
\end{proposition}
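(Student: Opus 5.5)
It suffices to check convergence on the convergence-determining class $\tilde{\mathscr{J}}$. Fix $A\in\mathscr{F}$ with associated $\tilde A$ and $\mathbb{A}$, and assume without loss of generality that $A\subseteq U(\bbtau^*)$ after rescaling. This is harmless, because conditionally on $\mathbb{X}_{n,1}\in\mathscr{W}^c_{r_n}(U(\bbtau^*))$ the shift class of $\frac1\tau\mathbb{X}_{n,1}$ always contains a representative whose $0$-th entry is the first exceedance. Under that anchoring convention, the goal is
\[
\lim_{n\to\infty}\frac{\mu\bigl(\tfrac1\tau\mathbb{X}_{n,1}\in\mathbb{A}\bigr)}{\mu\bigl(\mathbb{X}_{n,1}\in\mathscr{W}^c_{r_n}(U(\bbtau^*))\bigr)}=\mu(\mathbf{Z}\in\mathbb{A}).
\]

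\emph{Denominator.} By Corollary~\ref{SM:cor}, $k_n\mu(\mathscr{W}^c_{r_n}(U_n(\bbtau^*)))\to\vartheta(\bbtau^*)\hat\Gamma(\bbtau^*)$.

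\emph{Numerator.} Apply Proposition~\ref{prop 3.11} with $A_1$ taken to be the rescaled version of $A$ intersected with the event that $0$ is the first exceedance. Then $k_n\mu(\frac1\tau\mathbb{X}_{n,1}\in\mathbb{A})$ is asymptotically $n\mu(A_{n,1}^{(q)})$ as $n\to\infty$ and then $q\to\infty$. Concretely, $A_{n,1}^{(q)}$ is the event that there is an exceedance at time $0$, no exceedance at times $1,\dots,q$ of the rescaled version of $U$, and the block viewed from time $0$ lies in $A$; the portion before time $0$ must also contain no exceedance.

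Conditioning on $\{\mathbb{u}_n^{-1}(\mathbf{X}_0)\ngeq\bbtau^*\}$ and using stationarity to move the anchor to $r_n$, write
\[
n\mu(A_{n,1}^{(q)})=n\mu(U_n(\bbtau^*))\cdot\mu\bigl(\cdot\mid\mathbb{u}_n^{-1}(\mathbf{X}_{r_n})\ngeq\bbtau^*\bigr).
\]
The first factor tends to $\hat\Gamma(\bbtau^*)$. The conditional probability involves only finitely many coordinates, namely indices $-q,\dots,q$ together with the finitely many $N$ defining $A$. Since $A$ is built from a field of rectangles, property (i) of $\mathbf{Y}$ gives convergence to
\[
\mu\Bigl(\mathbf{Y}\in A,\ \min_i|Y_{j,i}|\ge1\ \forall\,1\le|j|\le q\ \text{(past and future in the appropriate sense)}\Bigr),
\]
modulo boundary effects, which I handle by restricting to continuity sets or by assuming the boundaries of $A$ are null for the law of $\mathbf{Y}$. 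Letting $q\to\infty$ and using (iii) gives $\mu(\mathbf{Y}\in A,\ A(\mathbf{Y})=0)$. By Proposition~\ref{prop: tetaY}, $\mu(A(\mathbf{Y})=0)=\mu(\inf_{j\le-1}\min_i|Y_{j,i}|\ge1)=\vartheta(\bbtau^*)$; note that $\bbtau^*$ normalized gives $\vartheta(\mathbb 1)$ by homogeneity. Dividing, the ratio tends to $\mu(\mathbf{Y}\in A\mid A(\mathbf{Y})=0)=\mu(\mathbf{Z}\in A)$. Since $\mathbf{Z}$ is anchored at $0$, this equals $\mu(\tilde\pi(\mathbf{Z})\in\tilde A)$.

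\emph{Main obstacle.} The hard step is the reduction from "$\mathbb{X}_{n,1}$ lies in $\mathbb{A}$ for some shift" to the single-anchor event $A_{n,1}^{(q)}$. Proposition~\ref{prop 3.11} handles this only when the relevant representative is the last or first exceedance. I must therefore combine the past and future constraints. The future part is controlled by $\mathcal{D}'$, which says that exceedances beyond lag $q$ in the block are negligible. The past part requires the equality of the forward and backward expressions in Proposition~\ref{prop: tetaY}, which holds by the time-change/stationarity of $\mathbf{Y}$. Using this to replace "last exceedance" by "first exceedance" anchoring, following the footnote and \cite[Lemma 3.5]{basrak2021compound}, is where I expect the most care to be needed. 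The remaining step is to upgrade from convergence on $\tilde{\mathscr{J}}$ to weak convergence, which follows from the convergence-determining property cited from \cite{freitas2025enrichedfunctionallimittheorems}.
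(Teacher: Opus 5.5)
Your denominator and the final division are fine, but the numerator step does not go through as sketched. This is the step you yourself flag as the main obstacle, and there are three problems with it.

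\emph{The anchoring reduction.} The ``without loss of generality'' anchoring is not harmless. The event $\mathbb{A}=\bigcup_{j}\sigma^{-j}(A)$ only asks that \emph{some} representative of the shift class lie in $A$, not the one anchored at the first exceedance. For a general $A\in\mathscr{F}$ the identity $\mu(\mathbf{Z}\in A)=\mu(\tilde\pi(\mathbf{Z})\in\tilde A)$ is therefore false. An example is ``an exceedance at time $0$ followed by a non-exceedance at time $1$'', which every cluster realises at its last exceedance. Restricting to anchored $A$ changes the test class, and you have not shown the new class is still convergence determining.

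\emph{The use of Proposition~\ref{prop 3.11}.} That proposition cannot carry the constraint ``$0$ is the first exceedance''. Sets in $\mathscr{F}$ only constrain coordinates $0,\dots,N$. Moreover, $A_{n,1}^{(q)}=A_{n,1}\cap\bigcap_{k=1}^{q}\sigma^{-k}(A_{n,1}^c)$ says that $A_{n,1}$ does not \emph{recur} at times $1,\dots,q$; it does not say that $U$ has no exceedance there.
\begin{itemize}
\item Applied correctly with $A_1=\tau A$, the proposition yields in the limit $\hat\Gamma(\bbtau^*)\,\mu(\mathbf{Y}\in A,\ \sigma^{k}\mathbf{Y}\notin A\ \forall k\ge 1)$. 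This anchors at the last occurrence of $A$, not at the first exceedance.
\item Converting last-occurrence anchoring into first-exceedance anchoring requires a time-change identity for $\mathbf{Y}$. That identity is not among the hypotheses: $\mathbf{Y}$ is not stationary, and Proposition~\ref{prop: tetaY} only asserts the forward/backward equality for the extremal-index events.
\end{itemize}

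\emph{The limit you actually write down.} Your expression imposes $\min_i\abs{Y_{j,i}}\ge 1$ for all $1\le\abs{j}\le q$. As $q\to\infty$ it tends to the probability that $\mathbf{Y}\in A$ and time $0$ is the \emph{only} exceedance. Property (iii) does not remove this future constraint, so clusters with several exceedances are lost. In the period-two system of Example~\ref{ex:dist-tail}, already for $A=\{\text{exceedance at time }0\}$ your ratio tends to $\tfrac12$ instead of $1$.

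The missing idea is to decompose by the position of the first exceedance \emph{inside the block}, while keeping the shift-invariant event $\mathbb{A}$ intact. This is what the paper does. Let $B_{n,i}$ be the probability that $\mathbb{X}_{n,1}\in\tau\mathbb{A}$ and the first exceedance of the block is at $i$.
\begin{enumerate}
\item Discard the terms with $i<q$. The cost is $q\,\mu(U_n(\bbtau^*))=o(1/k_n)$.
\item Replace ``no exceedance in $[0,i)$'' by ``no exceedance in $[i-q,i)$''. The total cost is at most $r_n\,\mu\bigl(U_n(\bbtau^*)\cap\mathscr{W}^c_{[q+1,r_n)}(U_n(\bbtau^*))\bigr)$, which is $o(1/k_n)$ by $\mathcal{D}^{\prime}$.
\item Use stationarity of $(\mathbf{X}_n)$ to move each exceedance to position $r_n$. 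The sum becomes $r_n\,\mu(U_n(\bbtau^*))$ times a single conditional probability.
\item By (i), that conditional probability converges to $\mu\bigl(\mathbf{Y}\in\mathbb{A},\ \min_i\abs{Y_{j,i}}\ge 1 \text{ for } -q\le j\le -1\bigr)$. Only a past constraint appears, because once the first exceedance is fixed nothing is imposed on the future.
\item Let $q\to\infty$ and use $r_n\mu(U_n(\bbtau^*))/\mu(\mathscr{W}^c_{r_n}(U_n(\bbtau^*)))\to\vartheta(\bbtau^*)^{-1}=\mu(A(\mathbf{Y})=0)^{-1}$, from Corollary~\ref{SM:cor} and Proposition~\ref{prop: tetaY}.
\end{enumerate}
This gives $\mu(\mathbf{Z}\in\mathbb{A})$ directly, with no anchoring reduction and no time-change formula.
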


\begin{proof}
	Since \(\tilde{\mathscr{J}}\) is convergence determining, we prove that, for every \(\tilde{A} \in \tilde{\mathscr{J} }\) such that 
	\(\mu (\tilde{\pi } (\mathbf{Z} ) \in \partial\tilde{A}) = 0 \) we have 
	\[
		\mu (\mathbb{X}_{n,1} \in \tau \mathbb{A} \mid \mathbb{X}_{n,1} \in \mathscr{W}^c _{r_n} (U(\bbtau^*))) \to \mu ((\mathbf{Y}_j)_{j\in \mathbb{Z} } \in \mathbb{A} \mid \inf_{j \leq -1}  \min_{i = 1,\dots d} \abs{Y_{j,i}} >1)
	\]  

	We use the same estimate and decompose the event \(\mathbb{X}_{n,1} \in \mathscr{W}^c _{r_n} (U(\bbtau^*))\) according to the first exceedance by introducing 
	\[
		B_{n,i} \coloneqq  \mu (\mathbb{X}_{n,1} \in \tau \mathbb{A} \cap \mathscr{W} _i (U(\bbtau^*)) \cap \sigma^{-i}U(\bbtau^*))
	\] 
	and for \(i \geq q\) we define  
	\[
		D_{n,i} \coloneqq  \mu (\mathbb{X}_{n,1} \in \tau \mathbb{A} \cap \mathscr{W}_{i-q,i}(U(\bbtau^*)) \cap \sigma^{-i}U(\bbtau^*))
	\]
	so that we have 
	\[
		\abs{\mu (\mathbb{X}_{n,1} \in \tau \mathbb{A} \cap \mathscr{W} _{r_n} ^c (U(\bbtau^*))) - \sum_{i = q} ^{r_n -1} B_{n,i}} \leq q \mu (U_n(\bbtau^*)) \eqqcolon I_1(n,q)
	\]
	while
	\[
		\abs{\sum_{i= q} ^{r_n} B_{n,i} - \sum_{i=q} ^{r_n} D_{n,i}} \leq (r_n - q)\mu (U_n ^{(q)}(\bbtau^*) \cap \mathscr{W}_{[q+1, r_n)} ^c(U_n(\bbtau^*))) \eqqcolon I_2(n,q)
	\]
	using these two estimates and stationarity we can write 
	\[
		\left\lvert \mu \left(\frac{1}{\tau } \mathbb{X}_{n,1} \in \mathbb{A}\ \middle\vert \mathscr{W} ^c _{r_n}(U_n(\bbtau^*)) \right) - P_n \right\rvert \leq \frac{2I_1(q,n) + I_2(n,q)}{\mu (\mathbb{X} _{n,1} \in \mathscr{W} ^c _{r_n}(U_n(\bbtau^*)))} \eqqcolon E(n,q) 
	\]
	where 
	\[
		P_n = \frac{r_n \mu (U_n(\bbtau^*))}{\mu (\mathscr{W} ^c _{r_n}(U_n(\bbtau^*)))}\mu (\mathbb{X}_n ^{r_n - q_n, 2r_n}\in \tau \mathbb{A} \cap \mathscr{W}_{r_n -q} (U(\bbtau^*)) \mid \abs{\mathbf{X}_{r_n}} \nleq \mathbb{u}_n (\bbtau^*))
	\]
	First we need to show that the right-hand side goes to zero as \(n \to \infty\) (and then \(q \to \infty\)). By Corollary~\ref{SM:cor} we have that \(E(n,q) \leq C k_n (2I_1(q,n) + I_2(n,q))\) and the first term goes to 0 by definition of \(k_n\) while the second term goes by condition \(\mathcal{D}^{\prime} \). To conclude we need to show that \(P_n\) converges to \(\mu ((\mathbf{Y}_j)_{j\in \mathbb{Z} } \in \mathbb{A} \mid \inf_{j \leq -1}  \min_{i = 1,\dots d} \abs{Y_{j,i}} >1)\). Again, by Corollary~\ref{SM:cor} we have 
	\[
		\frac{r_n \mu (U_n(\bbtau^*))}{\mu (\mathscr{W} ^c _{r_n}(U_n(\bbtau^*)))} \to \vartheta(\bbtau^*)^{-1} 
	\]
	so that we are left to prove that \(\mu (\mathbb{X}_n ^{r_n - q_n, 2r_n}\in \tau \mathbb{A} \cap \mathscr{W}_{r_n -q} (U(\bbtau^*)) \mid \abs{\mathbf{X}_{r_n}} \nleq \mathbb{u}_n (\bbtau^*))\) converges to \(\mu ((\mathbf{Y}_j)_{j\in \mathbb{Z} } \in \mathbb{A} \mid \inf_{j \leq -1}  \min_{i = 1,\dots d} \abs{Y_{j,i}} >1)\). 
	This follows from the definition of \(\mathbf{Y} \) and the fact that the \(\set{(\mathbf{Y}_j)_{j \in \mathbb{Z} } \in \partial \mathscr{W}_{-\infty, 0} (U(\mathbf{1}))}\) is negligible.  
	Indeed, that set is contained in \(\bigcup_{j\leq 1} \bigcup_{i=1}^{d} \set{\abs{Y_{j,i}} = 1}\) and from the fact that \(\mathbf{Y}_j = \Theta_j \cdot \min_{i =1,\dots d} \abs{Y_{j,i}} \) and \(\Theta_j\) is independent of \(\min_{i =1,\dots d} \abs{Y_{j,i}} \) we have that \(\mu ( \bigcup_{i=1}^{d} \set{\abs{Y_{j,i}} = 1}) = 0\).     
\end{proof}

We can decompose the transformed anchored tail process \(\mathbf{Z} =  L_Z \mathbf{K}\) where
\[
	L_Z \coloneqq \norm{\mathcal{P}(\mathbf{Z} )}_0 ^{-1}  \quad \text{and} \quad \mathbf{K} = \frac{\mathbf{Z}}{L_Z}
\]
then it follows from the same arguments as in~\cite[Proposition~3.14]{freitas2025enrichedfunctionallimittheorems} that \(L_Z\) is uniformly distributed in \((0,1)\) and \(L_Z\) and \(\mathbf{K} \) are independent, which motivates the following decomposition map on \(\tilde{l}^d_\infty\): 
\begin{align}
	\label{eq:decomposition-Z}
	h \colon \tilde{l}^d_\infty \setminus \{\infty\} &\to \mathbb{R}_+ \times \mathbb{S}^d \notag\\
	\tilde{\mathbf{x}} &\mapsto \left( \frac{1}{\lVert \tilde{\mathcal{P}}(\tilde{\mathbf{x}})\rVert_0},\ \tilde{\mathbf{x}}\,\bigl\lVert \tilde{\mathcal{P}}(\tilde{\mathbf{x}})\bigr\rVert_0 \right)
\end{align}
where \(\mathbb{S} ^d\) is the unitary sphere in \(l_\infty ^d\).

\subsection{Convergence of clusters}

The next result gives more insight into how the tail process encodes the information
contained in a cluster, by establishing the convergence of the intensity of the cluster
points. Since the clusters lie in $\tilde{l}^d_\infty\setminus\{\boldsymbol\infty\}$, which is
not locally compact, we work with the notion of weak$^\#$ convergence on the space
$\mathcal{M}^\#_{\tilde{l}^d_\infty\setminus\{\boldsymbol\infty\}}$ of boundedly finite
measures. We refer to~\cite[Appendix A]{kulik2020heavy} or
to~\cite{freitas2025enrichedfunctionallimittheorems} for a precise definition of such
convergence.

\begin{proposition}
	\label{prop:cluster_conv}
Under the assumptions used to define the transformed anchored tail process, condition
$\CD^{\prime}$, and with $\mathbb{X}_{n,1}$ as in~\eqref{eq:blocks}, the sequence of
boundedly finite measures $k_n\mu(\tilde\pi(\mathbb{X}_{n,1})\in\cdot)$ converges in the
weak$^\#$ topology of $\mathcal{M}^\#_{\tilde{l}^d_\infty\setminus\{\boldsymbol\infty\}}$:
\begin{equation}
	\label{eq:cluster-intensity}
    k_n\mu\bigl(\tilde\pi(\mathbb{X}_{n,1})\in\cdot\bigr)
    \;\longrightarrow\;
    \Gamma(\bbtheta)\,(\mathrm{Leb}\times\mu_{\tilde{\mathbf K}})\circ h
    \;=:\;\eta(\cdot),
\end{equation}
where $\mu_{\tilde{\mathbf K}}$ is the distribution of $\tilde\pi(\mathbf K)$.
\end{proposition}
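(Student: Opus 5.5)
Since $\tilde{\mathscr{J}}$ is convergence determining for weak$^\#$ convergence, it suffices to show that $k_n\mu(\tilde\pi(\mathbb{X}_{n,1})\in \tilde A)\to\eta(\tilde A)$ for every bounded $\tilde A\in\tilde{\mathscr{J}}$ with $\eta(\partial\tilde A)=0$. The natural first step is to localise on a fixed threshold level. By the Remark on bounded sets, a bounded $\tilde A$ is contained in $\{\tilde{\mathbf x}\colon \inf_j\min_i|x_{j,i}|<1/\epsilon\}$ for some $\epsilon>0$. Set $\tau=1/\epsilon$ and $\bbtau^*=(\tau,\dots,\tau)$. Then $\{\tilde\pi(\mathbb{X}_{n,1})\in\tilde A\}\subseteq \{\mathbb{X}_{n,1}\in\mathscr{W}^c_{r_n}(U(\bbtau^*))\}$, and we can factor
\[
k_n\mu\bigl(\tilde\pi(\mathbb{X}_{n,1})\in\tilde A\bigr)=k_n\mu\bigl(\mathscr{W}^c_{r_n}(U_n(\bbtau^*))\bigr)\cdot \mu\bigl(\tilde\pi(\mathbb{X}_{n,1})\in\tilde A\ \big\vert\ \mathbb{X}_{n,1}\in\mathscr{W}^c_{r_n}(U(\bbtau^*))\bigr).
\]

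Each factor is handled by an earlier result. By Corollary~\ref{SM:cor}, the first factor converges to $\Gamma(\bbtheta\bbtau^*)=\tau\,\Gamma(\bbtheta)$, using 1-homogeneity~\eqref{eq:g2}. By Proposition~\ref{prop:gen-Z}, applied with this $\tau$ and the rescaled set $\tau^{-1}\tilde A$, the second factor converges to $\mu(\tilde\pi(\tau\mathbf Z)\in\tilde A)$. Continuity of this set is needed here; it follows from $\eta(\partial\tilde A)=0$ once we show that the law of $\tilde\pi(\tau\mathbf Z)$ is absolutely continuous with respect to $\eta$ restricted to the bounded region. The limit is therefore $\tau\Gamma(\bbtheta)\,\mu(\tilde\pi(\tau\mathbf Z)\in\tilde A)$.

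It remains to identify this quantity with $\eta(\tilde A)$. Write $\mathbf Z=L_Z\mathbf K$, where $L_Z\sim U(0,1)$ is independent of $\mathbf K$, as recalled before~\eqref{eq:decomposition-Z}. Then
\[
\tau\,\mu\bigl(\tilde\pi(\tau L_Z\mathbf K)\in\tilde A\bigr)=\int_0^\tau \mu\bigl(\tilde\pi(s\mathbf K)\in\tilde A\bigr)\,ds .
\]
Every point of $\tilde A$ has first coordinate of $h$ at most $\tau$, because $h$ records $1/\|\tilde{\mathcal P}(\tilde{\mathbf x})\|_0=\inf_j\min_i|x_{j,i}|$ up to the orientation convention. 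The integral can therefore be extended to $(0,\infty)$, and it equals $(\mathrm{Leb}\times\mu_{\tilde{\mathbf K}})(h(\tilde A))$. This gives $\eta(\tilde A)$ and, as a by-product, shows that the result does not depend on the choice of $\epsilon$. Checking that $h$ is a bimeasurable bijection onto its image, and that the scaling conventions in $h$ match $L_Z=\|\mathcal P(\mathbf Z)\|_0^{-1}$, is routine bookkeeping.

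I expect the main obstacle to be the justification of the boundary and continuity conditions. Proposition~\ref{prop:gen-Z} only gives convergence on $\mathbf Z$-continuity sets, and the equality $\|\mathcal P(\tau\mathbf Z)\|_0^{-1}=\tau$ concentrates mass on the boundary of the threshold region. My first attempt would use the independence of $L_Z$ and $\mathbf K$ together with the uniform law of $L_Z$ (via Lemma~\ref{lemma:uniformY0}) to show that $\mu(\inf_j\min_i|\tau Z_{j,i}|=\tau)=0$. With that in hand, $\eta$-null boundaries transfer to $\mathbf Z$-null boundaries. If this fails for some $\tau$, I would instead choose $\tau$ outside the countable set of atoms, which is possible since enlarging $\tau$ preserves the inclusion of $\tilde A$.
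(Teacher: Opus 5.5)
Your proposal is correct and follows the paper's proof essentially step for step. You localise at a level $\tau$ with $\tilde A\subseteq\{\inf_j\min_i|x_{j,i}|<\tau\}$, factor through $k_n\mu(\mathscr{W}^c_{r_n}(U_n(\bbtau^*)))\to\tau\Gamma(\bbtheta)$ via Corollary~\ref{SM:cor}, apply Proposition~\ref{prop:gen-Z} to the conditional law, and use $\mathbf Z=L_Z\mathbf K$ with $L_Z$ uniform and independent of $\mathbf K$ to obtain $\Gamma(\bbtheta)\int_0^\tau\mu(s\mathbf K\in\mathbb A)\,ds$.

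Your extra observations fill in points the paper leaves implicit. First, the law of $\tilde\pi(\tau\mathbf Z)$ is dominated by $\eta/(\tau\Gamma(\bbtheta))$, which settles the continuity-set requirement directly. Second, the integral extends to $(0,\infty)$ because $\inf_j\min_i|sK_{j,i}|=s$. The boundary worry in your last paragraph is unnecessary: $\inf_j\min_i|\tau Z_{j,i}|=\tau L_Z$ has no atom at $\tau$, and the domination argument already covers the continuity condition.
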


\begin{proof}
	Consider \(\tilde{A} \in \tilde{\mathscr{J} }  \) bounded, so that there exists \(\tau > 0\) such that \(\inf \norm{x_j} < \tau \) for every \(x \in \tilde{A} \). Hence if \(\tilde{\pi }(\mathbb{X}_{n,1}) \in \tilde{A} \) we have \(\mathbb{X} _{n,1} \in \mathscr{W}_{r_n} ^c(U(\bbtau^*))\).
	As before, set \(\bbtau^* = (\tau, \dots ,\tau)\)

	\begin{align*}
		 & \mu \left( \tilde{\pi }(\mathbb{X}_{n,1}) \in \tilde{A} \right) = \mu \left( \mathbb{X} _{n,1} \in \mathbb{A} \right) = \mu \left(\mathbb{X}_{n,1} \in \mathbb{A} \cap \mathscr{W}_{r_n} ^c(U(\bbtau^*)) \right) = \\
		 & = \mu \left(\frac{1}{\tau } \mathbb{X}_{n,1} \in \tau ^{-1} \mathbb{A} \mid \mathbb{X} _{n,1}\in \mathscr{W}_{r_n} ^c(U(\bbtau^*))\right)\mu \left( \mathbb{X} _{n,1}\in \mathscr{W}_{r_n} ^c(U(\bbtau^*)) \right)
	\end{align*}
	Hence, using that \(k_n\mu \left( \mathbb{X}_{n,1} \in \mathscr{W}_{r_n} ^c(U(\bbtau^*)) \right) \to \tau \Gamma(\bbtheta)\), we obtain
	\begin{align*}
		 & k_n \mu \left( \tilde{\pi }\left(\mathbb{X} _{n,1}\right)\in  \tilde{A} \right) \to \tau \Gamma (\bbtheta) \mu \left( \tau \mathbf{Z} \in \mathbb{A}  \right)
	\end{align*}
	And to conclude,  
	\begin{align*}
		&\mu (\tau \mathbf{Z} \in \mathbb{A}) = \int_0 ^1 \mu (\tau (\mathbf{Z}  _j)_j \in \mathbb{A} \mid L_ \mathbf{Z} = v)dv = \\ 
		&= \int_0 ^1 \mu (\tau v (\mathbf{K}  _j)_j \in \mathbb{A} \mid L_ \mathbf{Z} = v)dv
		= \frac{1}{\tau }\int_0 ^1 \mu (s (\mathbf{K}_j)_j \in \mathbb{A} )ds. 
	\end{align*}
\end{proof}
Before proving the convergence of the point processes, let us give some intuition as to why \(\Gamma(\bbtheta)\) arises naturally here. Unlike the univariate case, where the frequency is simply \(\theta\), one might at first expect the multivariate frequency to be \(\vartheta(\bbtau)\), yet this turns out not to be the case.
\begin{remark}
    It is not surprising that the frequency appears to be \(\Gamma(\bbtheta)\). Indeed, since \(\mathbf{Y} \) is obtained conditioning on having a big event controlled by common threshold  \(\tau\), and the (normalized) avoidance probability satisfies
    \[
        \mu\!\left( \mathbb{M} _n \leq \mathbb{u} _n(\bbtau^*) \right) \longrightarrow e^{-\Gamma(\bbtheta)\, \tau },
    \]
	then the void probabilities of the limiting point process were expected to be of a Poisson process of intensity \(\Gamma(\bbtheta)\). One thus expects the limit to be a marked Poisson process with (temporal) frequency \(\Gamma(\bbtheta)\), the marks being distributed as \(\mathbf{K} \), which by construction describes what happens inside a cluster.
\end{remark}

We define and prove the weak convergence of the point processes that keep all the cluster information. Consider a random element on \(\mathcal{N}^\# _{\mathbb{R}^+ _0 \times \tilde{l}^d _\infty \setminus \{\infty \} }\) of boundedly finite point measures by

\begin{equation}
	\label{eq:ppn}
	N_n = \sum_{i=1} ^\infty \delta_{(i/k_n, \tilde{\pi}(\mathbb{X}_{n,i}))}.
\end{equation}

Consider \((T_i, U_i)_{i \in \mathbb{N}} \) as points of a Poisson point process on \((\mathbb{R}^+ _0)^2\) with intensity \(\Gamma(\bbtheta) Leb^2\) and then consider \(N_\infty\) as the \((\mathbbm{1} , \tilde{\mathbf{K}} )\)-marking of that point process, i.e.
\begin{equation}
	\label{eq:ppf}
	N_\infty = \sum_{i=0} ^\infty  \delta_{(T_i, U_i \tilde{\mathbf{K}}_i)}
\end{equation}
where \(\tilde{\mathbf{K}}_i\) are i.i.d copies of \(\tilde{\mathbf{K}} \). This is a Poisson point process with intensity \(Leb \times \eta \), where \(\eta \) is the cluster measure defined above.\\

\noindent As in~\cite{freitas2020complete} we assume the existence of a piling measure \(\nu \) in \(l_\infty ^d\) such that
\[
	\nu (B_l) = \lim_{q \to \infty} \lim_{n \to \infty} n \mu \left( B_{n,l} ^{(q)}\right)
\]
for every \(B_l \in \mathscr{F} \). As a consequence
\begin{equation}
	\label{eq:equiv-measure}
	\nu (B_l) = \eta (\tilde{B}_{l})
\end{equation}
This is because, from corollary~\ref{SM:cor} and proposition~\ref{prop:cluster_conv} we have
\[
	\nu (B _l) = \lim_{q \to \infty}\lim_{n \to \infty} n \mu \left( B_{n,l} ^{(q)} \right) = \lim_{q \to \infty}\lim_{n \to \infty} k_n \mu \left( \mathbb{X}_{n,1} \in \mathbb{B}_{n,l} \right) = \eta (\tilde{B}_l)
\]

Now we can state the main result about convergence.

\begin{theorem}
	\label{teo:convergence}
	Let \(\mathbf{X}_0, \mathbf{X}_1,\dots \) be a stationary process of random vectors in \((\mathbb{R}^m)^d\) such that~\eqref{eq:freq-univ} holds. Assume that the transformed anchored tail process is well-defined and conditions \(\mathcal{D}\) and \(\mathcal{D}^{\prime}\) hold. Then point process \(N_n\), given in~\eqref{eq:ppn} converges weakly in \(\mathcal{N}^\# _{\mathbb{R} ^+ _0 \times \tilde{l}_\infty  ^d}\) to the Poisson process \(N_\infty \) given by~\eqref{eq:ppf}
\end{theorem}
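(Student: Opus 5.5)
The plan follows the univariate argument of~\cite{freitas2025enrichedfunctionallimittheorems}. Since the sets in $\tilde{\mathscr{J}}$ are convergence determining for weak$^\#$ convergence, it suffices to apply Kallenberg's criterion for simple point processes. Concretely, I would verify two things for finite unions of rectangles of the form $J_l\times\tilde A_l$, with $J_l=[a_l,b_l)$ disjoint and $\tilde A_l\in\tilde{\mathscr J}$ bounded and $\eta$-continuity sets:
\begin{enumerate}[(a)]
\item convergence of the mean measures, $\mathbb{E}N_n(J\times\tilde A)\to \mathrm{Leb}(J)\,\eta(\tilde A)$;
\item convergence of the void probabilities,
\[
\mu\Bigl(\bigcap_{l=1}^p\{N_n(J_l\times\tilde A_l)=0\}\Bigr)\to \exp\Bigl(-\sum_{l=1}^p (b_l-a_l)\,\eta(\tilde A_l)\Bigr).
\]
\end{enumerate}
Since $N_\infty$ is a Poisson process with intensity $\mathrm{Leb}\times\eta$ and $\eta$ has no atoms in the time direction, these two conditions identify the limit.

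For (a), I would use stationarity to write $\mathbb{E}N_n(J\times\tilde A)$ as approximately $k_n(b-a)\,\mu(\tilde\pi(\mathbb X_{n,1})\in\tilde A)$. Proposition~\ref{prop:cluster_conv} then gives the limit $(b-a)\eta(\tilde A)$.

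For (b), I would translate the event $\{N_n(J_l\times\tilde A_l)=0\}$ into the dynamical event $\mathscr W_{J_{n,l}}(A_{n,l})$. The two differ only on blocks straddling the endpoints of $J_l$ or in the gaps. By stationarity the boundary blocks contribute $O(k_n^{-1})$ blocks' worth of probability, which vanishes as $n\to\infty$ via Proposition~\ref{prop 3.11}. Next I would replace $A_{n,l}$ by $A_{n,l}^{(q)}$; the error is controlled by $\mathcal{D}'$ after letting $n\to\infty$ and then $q\to\infty$.

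Then the standard blocking argument applies. Split $k_nJ_{n,l}$ into $\approx k_n(b_l-a_l)$ blocks of length $r_n$, and remove gaps of length $t_n$ at the end of each block; this costs $k_nt_n\mu(U_n)=o(1)$ by~\eqref{eq:kn}. Iterating condition $\mathcal{D}$ over the blocks and over the $p$ intervals factorises the probability into a product of $\prod_l\bigl(1-\mu(\mathscr W^c_{r_n}(A^{(q)}_{n,l}))\bigr)^{k_n(b_l-a_l)}$, up to an error of order $k_n\gamma(q,n,t_n)\le n\gamma(q,n,t_n)$. That error vanishes in the double limit.

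Finally, Proposition~\ref{prop 3.11} together with~\eqref{eq:equiv-measure} gives
\[
k_n\,\mu\bigl(\mathscr W^c_{r_n}(A^{(q)}_{n,l})\bigr)\to\nu(A_l)=\eta(\tilde A_l),
\]
which yields the exponential limit. The same boundedness argument as in Proposition~\ref{prop:cluster_conv} ensures each $A_{n,l}$ is contained in some $U_n(\bbtau^*)$, so all the relevant probabilities are $O(1/n)$.

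The main obstacle is the factorisation step under the weakened $\mathcal{D}$ and the $q$-dependent $\mathcal{D}'$. All estimates must be carried with $q$ fixed, letting $n\to\infty$ first and only then $q\to\infty$. I must also check that the truncation to $A^{(q)}_{n,l}$ is compatible with $\mathcal{D}$ when several different sets $A_l$ are involved simultaneously. Here I would first bound each block's contribution separately via the telescoping inequality
\[
\Bigl|\prod x_i-\prod y_i\Bigr|\le\sum|x_i-y_i| \qquad\text{for } x_i,y_i\in[0,1],
\]
and only then pass to the limits. A secondary point is checking that $\eta$-continuity of $\tilde A_l$ transfers to continuity for the law of $\mathbf Z$. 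This holds by Lemma~\ref{lemma:uniformY0} and the independence of $L_Z$ and $\mathbf K$, exactly as in the last step of Proposition~\ref{prop:gen-Z}.
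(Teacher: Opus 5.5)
Your overall architecture matches the one the paper relies on. The paper gives no separate proof of this theorem and defers to the univariate argument of~\cite{freitas2025enrichedfunctionallimittheorems}: Kallenberg's criterion on finite unions of $J_l\times\tilde A_l$, Proposition~\ref{prop:cluster_conv} for the mean measures, and blocking plus Proposition~\ref{prop 3.11} and~\eqref{eq:equiv-measure} to identify $\eta(\tilde A_l)$ in the void probabilities.

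\textbf{The gap: the factorisation step.} Condition $\mathcal D$ is not a block-mixing condition. It only decorrelates the \emph{single-time} event $A^{(q)}_{n,l}$ from a future avoidance event $B$ at distance at least $t$. It says nothing directly about $\mu\bigl(\mathscr W_{r_n}(A^{(q)}_{n,l})\cap B\bigr)$. So ``iterating $\mathcal D$ over the blocks'' does not yield $\prod_l\bigl(1-\mu(\mathscr W^c_{r_n}(A^{(q)}_{n,l}))\bigr)^{k_n(b_l-a_l)}$ with error $O(k_n\gamma)$. The telescoping inequality does not help here, because it only compares products you already have.

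The missing idea is to expand each block over its $r_n$ positions. Since $A^{(q)}_{n,l}\cap\sigma^{-j}(A^{(q)}_{n,l})=\emptyset$ for $1\le j\le q$,
\[
0\le\sum_{i=0}^{r_n-1}\mu\bigl(\sigma^{-i}(A^{(q)}_{n,l})\cap B\bigr)-\mu\bigl(\mathscr W^c_{r_n}(A^{(q)}_{n,l})\cap B\bigr)\le r_n\,\mu\bigl(A^{(q)}_{n,l}\cap\mathscr W^c_{[q+1,r_n)}(A_{n,l})\bigr).
\]
You then shift each of the $r_n$ terms by stationarity and apply $\mathcal D$ to it; the gap $t_n$ keeps the future at distance at least $t_n$. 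This costs $r_n\gamma(q,n,t_n)$ per block.

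Summing over the roughly $k_n$ blocks gives two error terms:
\begin{itemize}
\item $n\gamma(q,n,t_n)$, which vanishes in $\lim_q\limsup_n$ by $\mathcal D$;
\item $n\,\mu\bigl(A^{(q)}_{n,l}\cap\mathscr W^c_{[q+1,r_n)}(A_{n,l})\bigr)$, which vanishes by $\mathcal D'$, as in the proof of Proposition~\ref{prop 3.11}.
\end{itemize}
This is why the hypotheses are scaled by $n$ rather than $k_n$, and it is where $\mathcal D'$ is actually needed. The resulting product is $\prod_l\bigl(1-r_n\mu(A^{(q)}_{n,l})\bigr)^{k_n(b_l-a_l)}$. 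It tends to $\exp\bigl(-\sum_l(b_l-a_l)\nu(A_l)\bigr)$ directly, with $\nu(A_l)=\eta(\tilde A_l)$.

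\textbf{Two smaller corrections.}
\begin{enumerate}
\item Replacing $A_{n,l}$ by $A^{(q)}_{n,l}$ inside $\mathscr W_{J_{n,l}}$ needs no $\mathcal D'$. By a last-occurrence argument, the two avoidance events differ on a set of measure at most $q\,\mu(A_{n,l})=O(q/n)$.
\item The event $\{N_n(J_l\times\tilde A_l)=0\}$ differs from $\mathscr W_{J_{n,l}}(A_{n,l})$ not only at the ends of $J_l$ and in the gaps, but also at every internal block boundary. Windows of length $N+1$ straddling two blocks see the $\boldsymbol\infty$-padding of $\mathbb X_{n,i}$, but real observations in $A_{n,l}$. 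Since $\tilde A_l$ is bounded, such a discrepancy forces an exceedance of $\mathbb u_n(\bbtau^*)$ within distance $N$ of a boundary. The total cost is therefore $O\bigl(Nk_n\mu(U_n(\bbtau^*))\bigr)=O(Nk_n/n)\to0$, which is harmless but should be included in the estimate.
\end{enumerate}
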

We now present an example that serves as a direct test of a substantial portion of the results established in this work

\begin{example}
	\label{ex:tail}
    Consider a map $T \colon [0,1] \to [0,1]$ given by $T(x) = 2x \bmod 1$ 
    and the observables $\psi_i = -\log(\mathrm{dist}(x, \xi_i))$ for 
    $i = 1,2$, where $\xi_i$ are chosen so that $T(\xi_1) = \xi_2$ 
    and $\mathcal{O}^+(\xi_2) \cap \{\xi_1 , \xi_2\} = \emptyset$.

We compute the transformed anchored tail process for this example. 
First, since $\mathbf{Y}$ is independent of the choice of $\bbtau^*$, 
we write $U_n(\bbtau^*) = U_n$ and likewise for all sets derived from it. 
Given the form of the observables,
\[
    \set{\abs{\mathbf{X}_0} \nleq \mathbb{u}_n(\bbtau^*)} 
    = U_{n,1} \cup U_{n,2} 
    = \bigcup_{i=1}^2 B(\xi_i, e^{-u_{n,i}(\tau)}) 
    = \bigcup_{i=1}^2 B_{n,i},
\]
with
\[
    \frac{\mu(B_{n,i})}{\mu(U_n)} = \frac{1}{2} \quad \forall\, i = 1,2.
\]
To compute the process $(\mathbf{Z}_j)_{j \in \mathbb{Z}}$, we first compute 
the process $(\mathbf{Y}_j)_{j \in \mathbb{Z}}$ and then the event on which 
we condition, namely $\Set{\inf_{j \leq -1} \min_{i = 1,\dots d} \abs{Y_{j,i}} > 1}$, 
so that $\mathbf{Y}_0$ is the first observation above the threshold within 
the cluster.

Fixing \(s \leq 0,\ t>0\), we begin by computing the tail process $(\mathbf{Y}_j)_{j = s} ^t$ 
via its definition, that is, as the limit of
\[
    \left(
        \frac{1}{\tau}
        \left(
            \mathbb{u}_n^{-1}(\lvert \mathbf{X}_{r_n + s} \rvert), 
            \dots, 
            \mathbb{u}_n^{-1}(\lvert \mathbf{X}_{r_n + t} \rvert)
        \;\middle|\;
        \mathbf{X}_{r_n} \nleq \mathbb{u}_n(\bbtau^*)
        \right)
    \right).
\]
To this end we consider each case according to which set $T^{r_n}(x)$ 
belongs to. Suppose that $T^{r_n}(x) \in B_{n,1}$; then for $n$ sufficiently 
large we have $T^{r_n + j}(x) \notin U_n$ for $j \leq -1$, 
$T^{r_n+1}(x) \in B_{n,2}$, and $T^{r_n+j}(x) \notin U_n$ for $j \geq 2$. 
Moreover, the form of the observable gives
\[
    \mathrm{dist}(T^{r_n+1}(x), \xi_2) = 2\, \mathrm{dist}(T^{r_n}(x), \xi_1).
\]
Hence, in this case the process $(\mathbf{Y}_j)_{j \in \mathbb{Z}}$ is given by
\[
    W \begin{pmatrix}
        \dots & \infty & \overset{*}{1} & \infty & \infty & \dots \\
        \dots & \infty & \infty         & 2      & \infty & \dots
    \end{pmatrix},
\]
where $W \sim \mathcal{U}[0,1]$ by virtue of Lemma~\ref{lemma:uniformY0}, 
and where $*$ marks the position of $\mathbf{Y}_0$.

Turning to the case $T^{r_n}(x) \in B_{n,2}$, the key distinction comes from 
the decomposition
\[
    \{T^{r_n}(x) \in B_{n,2}\} 
    = \{T^{r_n}(x) \in B_{n,2} \cap T^{r_n-1}(x) \in B_{n,1}\} 
      \cup 
      \{T^{r_n}(x) \in B_{n,2} \cap T^{r_n-1}(x) \in B_{n,1}^c\},
\]
where
\[
    \frac{\mu(\{T^{r_n}(x) \in B_{n,2}\} \cap \{T^{r_n-1}(x) \in B_{n,1}\})}
         {\mu(B_{n,2})} = \frac{1}{2}.
\]
It follows that in this case the process $(\mathbf{Y}_j)_{j \in \mathbb{Z}}$ 
is given by
\begin{equation*}
    (\mathbf{Y}_j)_{j \in \mathbb{Z}} 
    = \mathbbm{1}_{K=1}\, W_1 
      \begin{pmatrix}
          \dots & \infty & \infty              & \infty & \dots \\
          \dots & \infty & \underset{*}{1}     & \infty & \dots
      \end{pmatrix}
    + \mathbbm{1}_{K=2}\, W_2 
      \begin{pmatrix}
          \dots & \tfrac{1}{2} & \infty          & \infty & \dots \\
          \dots & \infty       & \underset{*}{1} & \infty & \dots
      \end{pmatrix},
\end{equation*}
where $\mu(K=1) = \mu(K=2) = \tfrac{1}{2}$. Consequently, we can write 
the full process $(\mathbf{Y}_j)_{j \in \mathbb{Z}}$ as
\begin{multline*}
    (\mathbf{Y}_j)_{j \in \mathbb{Z}} 
    = \mathbbm{1}_{E=1}\, W_1
      \begin{pmatrix}
          \dots & \infty & \overset{*}{1} & \infty & \infty & \dots \\
          \dots & \infty & \infty         & 2      & \infty & \dots
      \end{pmatrix} \\
    + \mathbbm{1}_{E=2}\, W_2 
      \begin{pmatrix}
          \dots & \infty & \infty              & \infty & \dots \\
          \dots & \infty & \underset{*}{1}     & \infty & \dots
      \end{pmatrix}
    + \mathbbm{1}_{E=3}\, W_3 
      \begin{pmatrix}
          \dots & \tfrac{1}{2} & \infty          & \infty & \dots \\
          \dots & \infty       & \underset{*}{1} & \infty & \dots
      \end{pmatrix},
\end{multline*}
with $\mu(E=1) = \tfrac{1}{2}$ and $\mu(E=2) = \mu(E=3) = \tfrac{1}{4}$ and \(W_i \sim \mathcal{U}[0,1]\) for $i = 1, 2, 3$.

It remains to compute the probability of the conditioning event, namely 
$\inf_{j \leq -1} \min_{i = 1,\dots d} \abs{Y_{j,i}} > 1$:
\[
    \mu\!\left(\inf_{j \leq -1} \min_{i = 1,\dots d} \abs{Y_{j,i}} > 1\right) 
    = \mu(\{E=1\} \cup \{E=2\}) = \frac{3}{4}.
\]
The process $(\mathbf{Z}_j)_{j \in \mathbb{Z}}$ is therefore given by
\begin{equation*}
    (\mathbf{Z}_j)_{j \in \mathbb{Z}} 
    = \mathbbm{1}_{D=1}\, W_1 
      \begin{pmatrix}
          \dots & \infty & \overset{*}{1} & \infty & \infty & \dots \\
          \dots & \infty & \infty         & 2      & \infty & \dots
      \end{pmatrix} \\
    + \mathbbm{1}_{D=2}\, W_2 
      \begin{pmatrix}
          \dots & \infty & \infty              & \infty & \dots \\
          \dots & \infty & \underset{*}{1}     & \infty & \dots
      \end{pmatrix},
\end{equation*}
where
\[
    \mu(D=1) 
    = \frac{\mu(E=1)}{\mu\!\left(\inf_{j \leq -1} \min_{i = 1,\dots d} \abs{Y_{j,i}} > 1\right)} 
    = \frac{2}{3}, 
    \qquad 
    \mu(D=2) = \frac{1}{3}.
\]
\end{example}

\begin{example}[Separating systems via the anchored tail process]
\label{ex:dist-tail}
In~\cite{aimino2026multivariate} the authors computed the dependence structure via the stable tail dependence function $\Gamma(\bbtheta\bbtau)$ in various examples. In particular, in Examples 3.3.2 and 3.2.2 they obtain the \emph{same} stable dependence function, even though the underlying mechanism generating the rare events $(\mathcal{Z} _i)_{i =1,2}$ is different in the two systems. This shows that the stable tail dependence function alone need not distinguish between systems whose extremal behaviour is, in fact, genuinely different. Here we show that the tail process resolves precisely this ambiguity: computing $\mathbf{Z}$ for these two examples makes the difference explicit, disambiguating between the two systems whenever the tail process exists. In the following sections we will also see why it is natural for them to share the same dependence structure despite this distinction.
\medskip
\noindent\textit{Example 3.2.2:} The first example is a two periodic case for the doubling map, i.e.\
\[
T : [0,1] \to [0,1], \qquad T(x) = \lfloor 2x \rfloor,
\qquad Z_1 = \xi_1, \quad Z_2 = \xi_2
\]
s.t.\ $T(\xi_1) = \xi_2$ and $T^2(\xi_1) = \xi_1$. In this case the tail process is easy to compute (we refer to~\cite[Appendix D.1]{freitas2025enrichedfunctionallimittheorems} as it has been computed there the univariate version) and it is 
\[
\mathbf{Z}  = \mathbbm{1}_{E=0}\, W
\begin{pmatrix}
\overset{*}{1} & \infty & 4 & \infty & \cdots \\
\infty & 2 & \infty & 8 & \cdots
\end{pmatrix}
+ \mathbbm{1}_{E=1}\, W
\begin{pmatrix}
\infty & 2 & \infty & 8 & \cdots \\
\underset{*}{1} & \infty & 4 & \infty & \cdots
\end{pmatrix}
\]
with $W \sim \mathrm{Unif}(0,1)$ and $E \sim \mathrm{Bern}\!\left(\tfrac12\right)$.
\medskip

\noindent\textit{Example \ 3.3.2:} In this case we have the same map, but
$Z_1 = \{\xi_1, \xi_3\}$, $Z_2 = \{\xi_2, \xi_3\}$ s.t.\
$T(\xi_1) = T(\xi_2) = \xi_3$ and
$\theta^+(\xi_3) \cap \{\xi_1, \xi_2, \xi_3\} \neq \varnothing$.
Let us compute $\mathbf{Z} $ in this case: again, since $\mathbf{Y} $ (and $\mathbf{Z} $) are
independent of the choice of $\bbtau^*$, we write $U_n(\bbtau^*) = U_n$.
From
\[ \set{\abs{\mathbf{X}_{r_n}} \leq \mathbb{u}_n(\bbtau^*)} = U_{n,1} \cup U_{n,2}
= \bigcup_{i=1}^{3} B\!\left(\xi_i,\, e^{-u_{n,i}(\tau)}\right)
= \bigcup_{i=1}^{3} B_{i}
\]
with $\mu(B_{n,i})/\mu(U_n) = 1/3$. Since \(T^{-1} (\xi_3) = \set{\xi_1, \xi_2}\) we divide this set as 
\begin{align*}
	\set{T^{r_n}(x)  \in B_1} \cup \set{T^{r_n}(x) \in B_2} \cup \set{T^{r_n}(x) \in B_3, T^{r_n-1}(x) \cap B_1}\\
	\cup \set{T^{r_n}(x) \in B_3, T^{r_n-1}(x)  \cap B_2}
\end{align*}
with
\[
	\frac{\mu \left(\set{T^{r_n}(x) \in B_3, T^{r_n-1}(x) B_i}\right)}{\mu \left(\abs{\mathbf{X}_{r_n}} \leq \mathbb{u}_n(\bbtau^*)\right)} = \frac{1}{6}, \quad i = 1,2;
\]
and 
\[
	\frac{\mu \left(T^{r_n}(x) \in B_i\right)}{\mu \left(\abs{\mathbf{X}_{r_n}} \leq \mathbb{u}_n(\bbtau^*)\right)} = \frac{1}{3}, \quad i =1,2.   
\]
So tha tail process is given by 
\begin{multline*}
\mathbf{Y}  = \mathbbm{1}_{\hat E=0}\, W
\begin{pmatrix}
\overset{*}{1} & 2 & \infty & \cdots \\
\infty & 2 & \infty & \cdots
\end{pmatrix}
+ \mathbbm{1}_{ E=1}\, W
\begin{pmatrix}
\infty & 2 & \infty & \cdots \\
\underset{*}{1} & 2 & \infty & \cdots
\end{pmatrix} + \mathbbm{1}_{ E=3}\, W
\begin{pmatrix}
\frac{1}{2} & \overset{*}{1} & \infty & \cdots \\
\infty & 1 & \infty & \cdots
\end{pmatrix} \\ + \mathbbm{1}_{ E=4}\, W
\begin{pmatrix}
\infty & 1 & \infty & \cdots \\
\frac{1}{2} & \underset{*}{1} & \infty & \cdots
\end{pmatrix}
\end{multline*}
with \(W \sim \mathrm{Unif}(0,1) \) and \(\mu (E = 0) = \mu (E = 1) = \frac{1}{3}\), while \(\mu(E = 3) = \mu (E=4) = \frac{1}{6}\).    
The anchoring leads to the following anchored tail process \(\mathbf{Z} \) 
\[
\mathbf{Z}  = \mathbbm{1}_{\hat E=0}\, W
\begin{pmatrix}
\overset{*}{1} & 2 & \infty & \cdots \\
\infty & 2 & \infty & \cdots
\end{pmatrix}
+ \mathbbm{1}_{\hat E=1}\, W
\begin{pmatrix}
\infty & 2 & \infty & \cdots \\
\underset{*}{1} & 2 & \infty & \cdots
\end{pmatrix}
\]
with \(\hat E \sim \mathrm{Bin(\frac{1}{2})} \). 
So we showed that these two systems, despite having the same
stable tail dependence structure, can be distinguished via the
tail process.
\end{example}
\section{Spectral measure for stationary sequences}
\label{sec:spectral_measure}

In this section, within the framework of stationary sequences $(X_n)_n$ defined in~(2.1), we define the spectral measure associated with the stable tail dependence structure. The spectral measure is a central object in multivariate extreme value theory (see, e.g., \cite{beirlant2006statistics,segers2012max}), as it encodes the relative weights with which extreme events are shared among the components, and thereby characterizes their asymptotic dependence. The standard construction is based on a (pseudo-)polar decomposition with respect to two norms, $\|\cdot\|_1$ and $\|\cdot\|_2$, and the spectral measure is obtained as the angular part of the exponent measure; see~\cite{segers2012max} for details.

For stationary sequences, however, this construction must be adapted, because the components may exhibit temporal dependence. Large observations occurring at different times can still belong to the same extremal episode: for instance, an extreme value in one component may induce an extreme value in another after a finite number of time steps. We therefore group observations into clusters. This grouping is justified by the blocking argument: under the mixing condition introduced above, blocks of length $r_n$ become asymptotically independent by virtue of $D(u_n(\tau))$, while $D'(u_n(\tau))$ ensures that each block contains at most one cluster of extremes. We refer to~\cite[Theorem~2.6]{aimino2026multivariate} for an application of this blocking technique in the multivariate setting.

We first introduce the notion of an \emph{extremal profile}, namely a configuration describing the relative magnitudes of the components in an extreme observation. Let $X_n$ be as above, and let $u_n$ satisfy conditions~(2.2) and~(2.4). For each $n\in\mathbb{N}$ and $j\le n$ we define
\[
  \hat\rho_{n,j} := \min_{i=1,\dots,d} u_{n,i}^{-1}(|X_{j,i}|),
  \qquad
  \hat\omega_{n,j} := \frac{u_n^{-1}(X_j)}{\hat\rho_{n,j}} \in \Big\{x\in[1,\infty]^d : \inf_i x_i = 1\Big\} =: \Sigma_d .
\]
Here $\hat\rho_{n,j}$ measures the overall magnitude of the observation at time $j$, and $\hat\omega_{n,j}$ is the associated angular component, describing the relative magnitudes of the components. A small value of $\hat\rho_{n,j}$ means that a large observation occurred at time $j$; indeed,
\[
  \min_{i=1,\dots,d} u_{n,i}^{-1}(|X_{j,i}|) \le \tau \iff X_j \not\le u_n(\tau^*).
\]

The natural candidate for the spectral measure $H$ on $\Sigma_d$ is then the limit of
\[
n\,\mu\big(\hat\rho_{n,0}<1,\ \hat\omega_{n,0}\in\cdot\,\big)
\] 
as in the i.i.d. case. This choice, however, detects dependence between components only when they become extreme at the same time, which is the only mechanism of dependence available in the i.i.d. setting. For stationary sequences, components may also be dependent through extremes occurring at different times within the same cluster, and this is not seen by the construction above: two components may appear asymptotically independent merely because their large values are not simultaneous, even though they are linked by the dynamics. The extremal profiles of a stationary sequence must therefore be defined at the level of clusters. To this end, we group observations into blocks and keep track of the minimum in each component, which corresponds to recording the maximum observable per component, as in the definition of $M_n$. We therefore refine the pseudo-polar decomposition as follows. First, we define

\begin{equation*}
	m_n \coloneqq  \left(\bigwedge_{j = 0}^{r_n -1} u_{n,1}^{-1}(\abs{X_{j,1}}),\dots , \bigwedge_{j = 0}^{r_n - 1} u_{n,d}^{-1}(\abs{X_{j,d}})\right)
\end{equation*}

Let \((k_n)_n\) be given by condition \(\mathcal{D} ^{\prime} \), from Corollary~\ref{SM:cor} we have that \(\forall \bbtau \in [0, \infty ]^d \setminus \{\boldsymbol{\infty }\}\) we have   
\begin{equation}
	\label{eq:SM-kn}
	\lim_{n \to \infty} k_n\mu (m_n \ngeq \bbtau)  = \lim_{n \to \infty} k_n \mu\left(\mathscr{W} _{r_n} ^c (U_n(\bbtau)) \right) = \vartheta(\bbtau)\hat{\Gamma}(\bbtau) = \Gamma(\bbtheta\bbtau)
\end{equation}

Consider the space $E \coloneqq [0,\infty]^d \setminus \{\boldsymbol{\infty}\}$. It is an open
subset of the compact Polish space $[0,\infty]^d$, hence Polish; a compatible complete metric
is obtained as for the sequence space $l_\infty^d$ (see~\cite{kulik2020heavy}). Its relatively
compact sets are exactly those bounded away from $\boldsymbol{\infty}$. On $E$ we consider the
sequence of measures
\[
    \mu_n \coloneqq k_n\,\mu(m_n \in \cdot).
\]
We seek a limiting measure for this sequence. By~\cite[Proposition 3.16]{resnick1987extreme}, the sequence of boundedly finite measures $(\mu_n)_{n 0}$ is relatively compact for the vague topology if and only if
\[
    \sup_n \mu_n(B) < \infty \qquad \text{for every relatively compact } B \subseteq E.
\]
For each such $B$ there is a $\tau >0$ with $B \subseteq L_{\bbtau^*}$, where
\[
    L_{\bbtau ^*} \coloneqq \bigl\{x \in E : \min_{i=1,\dots,d} x_i < \tau \bigr\}
\]
Consequently, 
\[
    \mu_n(B) \le \mu_n(L_{\bbtau^*})
    = k_n\,\mu\bigl(\mathscr{W}^c_{r_n}(U_n(\bbtau^*))\bigr) < \infty
\]
by~\eqref{eq:SM-kn}, so $(\mu_n)_n$ is vaguely relatively compact.

To uniquely identify the limit, note that the sets $\{L_{\bbtau}\}_{\bbtau}$ form a
measure-determining class.
Hence there exists a unique boundedly finite measure $\Lambda$ on $E$ with $\mu_n \xrightarrow{\,v\,}
\Lambda$ along the full sequence, satisfying, for every $\bbtau \in E$,
\begin{equation}
\label{eq:SM_const}
    \Lambda(L_{\bbtau})
    = \Lambda\bigl(\{y \in E : y \not\ge \bbtau\}\bigr)
    = \Gamma(\bbtheta\,\bbtau).
\end{equation}
In particular $\Lambda\big|_{E} = \Lambda\big|_{(0,\infty]^d \setminus \{\boldsymbol{\infty}\}}$:
since $\Gamma$ is homogeneous of degree $1$, for every $\varepsilon>0$
\[
    \Lambda\bigl(\{y \in E : \min_i y_i = 0\}\bigr)
    \le \Lambda(L_{\varepsilon\mathbf{1}})
    = \Gamma(\varepsilon\,\bbtheta)
    = \varepsilon\,\Gamma(\bbtheta),	
\]
hence $\Lambda(\{y : \min_i y_i = 0\}) = 0$, which gives the claim.
Consider the following decomposition of \((0,\infty]^d \setminus \{\boldsymbol{\infty}\}\) into a radial and an angular part, given by the map \(\varphi\) defined as follows
\begin{align*}
\varphi \colon  (0,\infty ]^d \setminus & \{\boldsymbol{\infty}\} \rightarrow (0,\infty ) \times \Sigma_d \\
& y \mapsto \left(\min_{i=1,\dots, d} y_i,\ \frac{(y_1, \dots y_d)}{\min_{i=1,\dots, d} y_i}\right)
\end{align*}

And introduce the pseudo-polar decomposition by considering
\begin{equation}
	\label{eq:phi}
	\rho _n \coloneqq \varphi_1 (m_n) = \min_{i=1,\dots d} m_{n,i} = \norm{\mathcal{P}(\mathbb{X}_{n,1})}^{-1}, \quad \omega_n \coloneqq \varphi_2 (m_n) \in \Sigma_d 
\end{equation}
Note that while \(\rho_n \) encodes the largest observation inside the cluster, and in particular 
\[
	\rho_n < \tau \iff \mathbb{X}_{n,1} \in \mathscr{W} _{r_n} ^c (U_n(\bbtau^*)),
\] 
\(\omega_n\) retains only the largest observation per component within each cluster.  
As a result, the spectral measure we obtain encodes the dependence at leading order, i.e. the dominant asymptotic dependence between the components, while finer interactions are not captured.\\ 

We define the spectral measure \(H\) on \(\Sigma_d\) as the angular part of \(\Lambda\) 
\begin{equation}
	H(B) = \Lambda\left(y \in (0,\infty]^d \setminus \{\boldsymbol{\infty}\} \colon \varphi(y) < 1,\ \varphi_2(y) \in B\right), \qquad \forall B \subset \Sigma_d. 
\end{equation}

From properties~\eqref{eq:g1},~\eqref{eq:g2} of \(\Gamma\) and \(\vartheta(\cdot)\), we have that \(\forall a > 0\)  
\[ 
\Lambda (aL_\bbtau) = \Lambda(L_{a \bbtau}) = \vartheta(a\bbtau)\hat{\Gamma}(a \bbtau) = a\Lambda (L_\bbtau),
\]
i.e. \(\Lambda \) is 1-homogeneous, and therefore for all \(\tau > 0\) 
\[
	\Lambda\left(\varphi(y) < \tau ,\ \varphi_2(y) \in B\right) = \tau \Lambda \left(\varphi(y) < 1 ,\ \varphi_2(y) \in B\right) = \tau H(B).
\]
This yields us a decomposition of \(\Lambda\) as \( dr H(dw)\) in \((0,\infty)\times \Sigma_d\), i.e. for every \(\Lambda \)-integrable function \(g\)
\[
	\int g(y)\Lambda(dy) = \int_{\Sigma_d}\int_{0}^\infty g(rw) dr H(dw).
\]
Choosing, for a given \(\bbtau \in [0,\infty )^d\), \(g(y) = I_A(y)\) where \(A = \set{y \in (0, \infty]^d \setminus \{\boldsymbol{\infty}\}  : y \ngeq \bbtau}\), from~\eqref{eq:SM_const}, we obtain a useful representation of the stable tail dependence function that allows us to compute the spectral measure in concrete examples, indeed from~\eqref{eq:SM_const} and~\eqref{eq:SM-kn} the left-hand side becomes 
\begin{align*}
	\int_{(0, \infty]^d \setminus \{\boldsymbol{\infty}\}} \mathbbm{1}_{A}(y) \Lambda(dy) = \Lambda\left(y \in (0, \infty]^d \setminus \{\boldsymbol{\infty}\} \colon y \ngeq \bbtau \right) = \Lambda(L_\bbtau) = \Gamma(\bbtheta \bbtau) 
\end{align*}
while the right-hand side is 
\begin{align*}
	\int_{\Sigma_d } \int_0^\infty  \mathbbm{1}_{A}(r \omega) dr H(d\omega) = \int_{\Sigma_d} \int_0 ^\infty \mathbbm{1}_{(r,\omega ) \colon r < \max_{i =1,\dots d}\left(\frac{\tau_i}{\omega_i}\right)} dr H(d \omega) = \int_{\Sigma_d} \max_{i = 1,\dots d} \left(\frac{\tau _i}{\omega_i}\right) H(d \omega)
\end{align*}
This leads to a very useful representation and the stationary case analogue of the de Haan–Resnick representation for max-stable vectors, already obtained in~\cite{de1977limit}
\begin{equation}
\label{eq:spectral-representation}
	\Gamma(\bbtheta\bbtau) = \int_{\Sigma_d}\max_{i = 1, \dots, d}\left(\frac{\tau_i}{\omega_i}\right)H(d\omega)
\end{equation}

Since $\Lambda$ is obtained as the vague limit of $k_n\mu(m_n \in \cdot)$, 
for every bounded $\Lambda$-continuity set $A \in \mathcal{B}$ we have 
\[
    k_n\mu(m_n \in A) \to \Lambda(A).
\]
The set $A_B \coloneqq \{y \in (0,\infty]^d \setminus \{\boldsymbol{\infty}\} 
: \varphi(y) \in (0,1)\times B\}$ is bounded (it is contained in $L_1$), 
and by the product structure $\Lambda = dr\, H(d\omega)$,
\[
\Lambda(\partial A_B) 
    = \Lambda\bigl(\{r = 1\} \times B\bigr) + \Lambda\bigl((0,1) \times \partial B\bigr) = H(\partial B),
\]
hence for every $B \subset \Sigma_d$ with $H(\partial B) = 0$,
\begin{equation}
\label{eq:Hvague}
    H(B) = \Lambda(A_B) = \lim_{n\to\infty} k_n\mu(\rho_n < 1,\ \omega_n \in B),
\end{equation}
that is, $H$ is the weak\footnote{as the space is compact vague and weak convergence are equivalent} limit of the sequence of measures $k_n \mu(\rho_n < 1,\ \omega_n \in \cdot)$ on $\Sigma_d$.
Before elaborating on the properties of the spectral measure as defined here, we start by proving that \(H(\Sigma_d) = \Gamma(\bbtheta)\), which is exactly the frequency of the limiting point process \(N_\infty \) and that the extremal index \(\theta_i\) of each component can be obtained from \(H\).     

When we normalize the spectral measure by its total mass, we obtain the so-called \textit{profile  measure} 
\[
Q(\cdot) = \frac{H(\cdot)}{H(\Sigma_d)},
\]   

\begin{lemma}
	\label{lem:HG}
	\(H(\Sigma_d) = \Gamma(\bbtheta)\), where \(\bbtheta = (\theta_1 ,\dots \theta _d)\) and \(\theta_i\) is the extremal index of the \(i\)-th component of \((X_n)_{n \geq 1}\). Moreover, for every \(i \in \{1,\dots d\}\) we have  
	\begin{equation}
		\int_{\Sigma_d} \frac{H(d \omega)}{\omega_i} = \theta_i 
	\end{equation} 	
	where \(\frac{1}{\infty } \coloneqq  0\).  
\end{lemma}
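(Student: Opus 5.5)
Both identities will follow from the de Haan--Resnick type representation \eqref{eq:spectral-representation}, evaluated at two well-chosen vectors $\bbtau$, together with the margin property \eqref{eq:g1} and the convention $1/\infty = 0$.

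\emph{Total mass.} Take $\bbtau = \mathbf{1} = (1,\dots,1)$ in \eqref{eq:spectral-representation}. Every $\omega \in \Sigma_d$ satisfies $\omega_i \in [1,\infty]$ and $\min_i \omega_i = 1$. Hence $\max_i (1/\omega_i) = 1/\min_i \omega_i = 1$ for every $\omega$. The integrand is therefore identically $1$, and we get $\Gamma(\bbtheta) = \int_{\Sigma_d} H(d\omega) = H(\Sigma_d)$.

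A direct check is also available from \eqref{eq:Hvague}. Take $B = \Sigma_d$, whose boundary in $\Sigma_d$ is empty. Then $H(\Sigma_d) = \lim_n k_n \mu(\rho_n < 1)$. Since $\rho_n < 1 \iff \mathbb{X}_{n,1} \in \mathscr{W}^c_{r_n}(U_n(\mathbf{1}))$, this limit equals $\Gamma(\bbtheta)$ by \eqref{eq:SM-kn}. I would mention this as a consistency check.

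\emph{Marginal extremal indices.} Take $\bbtau = e_i$, the $i$-th canonical vector. Then $\max_k(\tau_k/\omega_k) = 1/\omega_i$, with value $0$ when $\omega_i = \infty$. This value $0$ is consistent with the pointwise computation $\int_0^\infty \mathbbm{1}\{r < \max_k \tau_k/\omega_k\}\,dr$ from the derivation of \eqref{eq:spectral-representation}. The representation then gives $\int_{\Sigma_d} H(d\omega)/\omega_i = \Gamma(\bbtheta e_i)$, and by \eqref{eq:g1} we have $\Gamma(\bbtheta e_i) = \theta_i$.

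\emph{Main obstacle.} The delicate point is that $\bbtau = e_i$ lies on the boundary of $[0,\infty)^d$, since some coordinates vanish. Two things must be confirmed for such $\bbtau$. First, \eqref{eq:SM_const} must hold there, i.e. $\Lambda(L_{e_i}) = \Gamma(\bbtheta e_i)$. This is fine because \eqref{eq:SM-kn} is stated for all $\bbtau \in [0,\infty]^d\setminus\{\boldsymbol\infty\}$, and degenerate coordinates correspond to $u_{n,k}(0) = \infty$, so only component $i$ is involved. Second, the polar decomposition must remain valid when $\omega_i = \infty$, where the integrand vanishes. If needed, I would instead approximate by $\bbtau^{(\varepsilon)} = e_i + \varepsilon\sum_{k\neq i} e_k$. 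Using $\Gamma(\bbtheta\bbtau^{(\varepsilon)}) \to \Gamma(\bbtheta e_i)$ (continuity from \eqref{eq:g-prop2}-type Lipschitz bounds) and dominated convergence against the finite measure $H$, with integrand bounded by $1+\varepsilon$, the identity passes to the limit.
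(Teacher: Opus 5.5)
Your proposal is correct and matches the paper's proof: both evaluate the representation \eqref{eq:spectral-representation} at $\bbtau=\mathbf 1$, where the integrand is identically $1$ because $\min_i\omega_i=1$, and at $\bbtau=e_i$, where the integrand is $1/\omega_i$, and then apply the margin property \eqref{eq:g1} to get $\Gamma(\bbtheta e_i)=\theta_i$. The consistency check via \eqref{eq:Hvague} and the $\varepsilon$-approximation for the boundary point $e_i$ are valid but unnecessary. The paper applies the representation directly at $e_i$, which is legitimate because \eqref{eq:SM_const} is stated for every $\bbtau\in E$.
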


\begin{proof}
	By definition of spectral measure
	\[
		\Gamma(\bbtheta \bbtau) = \int_{\Sigma_d}\max_{i = 1,\dots d}{\left(\frac{\tau_i}{\omega_i}\right)}H(d\omega)
	\]
	given that for every \(\omega \in \Sigma_d\) we have \(\omega \geq (1,\dots ,1)\) we obtain \(\max_{1,\dots d}\left(\frac{\tau_i}{\omega_i}\right) \leq \max (\tau_i)\).
	On the other hand, by definition of \(\Sigma_d\) we know that \(\min_i \omega_i = 1\), hence \(\min_{i=1,\dots d}(\tau_i) \leq \max_{i=1,\dots d}\left(\frac{\tau_i}{\omega_i}\right)\). Putting all together,
	\[
		\min_{i=1,\dots d} (\tau_i) \leq \max_{i=1,\dots d}\left(\frac{\tau_i}{\omega_i}\right) \leq \max_{i=1,\dots d} (\tau_i)
	\]
	Finally if we consider \(\tau_1 =\dots = \tau_d = 1\) we obtain
	\[
		\Gamma(\bbtheta) = \int_{\Sigma_d}H(d\omega) = H(\Sigma_d)
	\]
From property~\eqref{eq:g2} and~\eqref{eq:g_prop} we have  
\begin{equation*}
	\theta_i = \theta(e_i) = \theta (e_i) \hat{\Gamma}(e_i) = \Gamma(\bbtheta e_i) = \int_{\Sigma_d} \frac{H(d \omega)}{\omega_i}.
\end{equation*} 
\end{proof}

\begin{remark}
To the best of our knowledge, this result does not have a direct analogue in the existing literature.
In the i.i.d. setting (cf.~\cite{segers2012max,beirlant2006statistics}), the spectral measure satisfies constraints analogous to those in~\eqref{lem:HG}, but all components have the same value, determined by the choice of norm (typically \(1\)). In the stationary setting considered here, the corresponding imbalance is instead determined by the extremal indices of the individual components.
When all components share the same tail, the measure can be normalized and pushed forward to the simplex. In this case, our construction fully extends the classical spectral measure of~\cite{segers2012max,beirlant2006statistics} to the stationary setting.
\end{remark}
\subsection{Extracting the profile distribution from tail process}

In this section we examine the tail process more closely and 
establish its relationship with the spectral measure, or even better with the profile distribution \(Q\). Recall 
that, given the representation of the limiting point process 
$N_\infty$, the anchored tail process completely describes 
the behaviour inside each cluster, and therefore encodes all 
relevant information about the system. It is thus natural to 
expect that the spectral measure can be recovered as a 
projection of the tail process that retains only the 
essential information needed for multivariate extremal 
analysis, namely the dependence structure between components 
at the extremal level. This is the content of 
Corollary~\ref{cor:proj}.

We begin by exhibiting how the spectral measure arises in an 
explicit representation of the transformed tail process. 
Since $\mathbf{Z}$ is obtained as the limit of 
$\mathbb{X}_{n,1}$ conditioned on 
$\mathscr{W}^c_{r_n}(U_n(\bbtau^*))$, and since 
the spectral measure encodes different degrees of extremal 
dependence within that conditioning event, one expects 
$\mathbf{Z}$ to admit a mixture representation indexed by 
the extremal profiles. To make this precise we first verify 
that the relevant projection maps are continuous.

\begin{proposition}
The function $S \colon l_0^d \setminus \{\mathbf{0}\} \to 
\set{x \in \mathbb{R}_+ ^d \colon \max_{i=1,\dots,d} x_i = 1}$ defined by
\[
S(\mathbf{x}) \coloneqq \left(
\frac{\sup_{j \in \mathbb{Z}} |x_{j,1}|}{\norm{\mathbf{x}}_0}, \dots,
\frac{\sup_{j \in \mathbb{Z}} |x_{j,d}|}{\norm{\mathbf{x}}_0}
\right)
\]
is continuous.
\end{proposition}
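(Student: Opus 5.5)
The map $S$ factors as the coordinatewise suprema $\mathbf{x} \mapsto s(\mathbf{x}) \coloneqq (\sup_{j} \abs{x_{j,1}}, \dots, \sup_j \abs{x_{j,d}}) \in \mathbb{R}_+^d$, followed by division by the scalar $\norm{\mathbf{x}}_0$. The plan is to show that both pieces are continuous on $l_0^d \setminus \{\mathbf{0}\}$ with respect to the metric $d(\cdot,\cdot)$ introduced in the clusters subsection, and that the denominator never vanishes there. Since the quotient of continuous functions with nonvanishing denominator is continuous, this gives the claim.

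\textbf{Step 1: the coordinate suprema are $1$-Lipschitz in $\norm{\cdot}_0$.} For each $i$ and $j$ the reverse triangle inequality gives $\abs{x_{j,i}} \le \abs{y_{j,i}} + \abs{x_{j,i}-y_{j,i}} \le \sup_{k}\abs{y_{k,i}} + \norm{\mathbf{x}-\mathbf{y}}_0$. Taking the supremum over $j$ and then exchanging the roles of $\mathbf{x}$ and $\mathbf{y}$ yields
\[
\Bigl|\sup_j \abs{x_{j,i}} - \sup_j \abs{y_{j,i}}\Bigr| \le \norm{\mathbf{x}-\mathbf{y}}_0 .
\]
The same argument applied to the maximum over $i$ shows $\abs{\norm{\mathbf{x}}_0-\norm{\mathbf{y}}_0}\le\norm{\mathbf{x}-\mathbf{y}}_0$. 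All these suprema are finite because elements of $l_0^d$ tend to $0$ as $\abs{j}\to\infty$.

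\textbf{Step 2: relate the metric $d$ to the sup norm.} If $d(\mathbf{x}_k,\mathbf{x})\to 0$, then eventually $d(\mathbf{x}_k,\mathbf{x})<1$. On that range $\norm{\mathbf{x}_k-\mathbf{x}}_0\wedge 1 = \norm{\mathbf{x}_k-\mathbf{x}}_0 \le d(\mathbf{x}_k,\mathbf{x})$, so $\mathbf{x}_k\to\mathbf{x}$ in $\norm{\cdot}_0$. The second term of $d$ only adds control of $1/\norm{\cdot}_0$, which is consistent with this. Hence convergence in $d$ implies sup-norm convergence, and continuity with respect to $\norm{\cdot}_0$ suffices.

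\textbf{Step 3: conclude.} For $\mathbf{x}\neq\mathbf{0}$ we have $\norm{\mathbf{x}}_0>0$. By Step 1 we have $\norm{\mathbf{x}_k}_0\to\norm{\mathbf{x}}_0$ and $s(\mathbf{x}_k)\to s(\mathbf{x})$, so $S(\mathbf{x}_k)=s(\mathbf{x}_k)/\norm{\mathbf{x}_k}_0\to S(\mathbf{x})$. The image lies in the stated set, since $\max_i \sup_j\abs{x_{j,i}}=\norm{\mathbf{x}}_0$ and each coordinate is nonnegative.

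There is no real obstacle here. The only point needing care is Step 2, namely checking that the truncation $\wedge 1$ and the extra reciprocal term in $d$ do not weaken the topology below that of the sup norm. If $S$ is later needed on $\tilde{l}_0^d$ or, via $\mathcal{P}$, on $l_\infty^d$, one should also note that $S$ is shift-invariant, so it descends to the quotient; that follows immediately from the definition.
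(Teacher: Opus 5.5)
Your proposal is correct and follows essentially the same route as the paper: $d$-convergence forces $\norm{\cdot}_0$-convergence, the coordinatewise suprema are $1$-Lipschitz in $\norm{\cdot}_0$, and you then divide by the nonvanishing quantity $\norm{\mathbf{x}}_0=\max_i\sup_j\abs{x_{j,i}}$. The differences are cosmetic: you prove the Lipschitz bound for $\norm{\cdot}_0$ directly, whereas the paper phrases the last step as composition with $y\mapsto y/\norm{y}$, and you spell out the comparison between $d$ and the sup norm, which the paper only asserts.
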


\begin{proof}
For every $\mathbf{x} \in l_0^d \setminus \{\mathbf{0}\}$ 
we have $\max_{i=1,\dots,d} \sup_{j \in \mathbb{Z}} 
|x_{j,i}| > 0$, since otherwise $\mathbf{x} \equiv 0$; 
hence $S$ is well defined. 

We first show that the map 
$\mathbf{x} \mapsto \left(\sup_{j \in \mathbb{Z}} |x_{j,1}|, 
\dots, \sup_{j \in \mathbb{Z}} |x_{j,d}|\right)$ is 
continuous. If $\mathbf{x}_n \to \mathbf{x}_\infty$ in 
$l_0^d$, then $d(\mathbf{x}_n, \mathbf{x}_\infty) \to 0$ 
implies $\|\mathbf{x}_n - \mathbf{x}_\infty\|_0 \to 0$, 
i.e.
\[
\sup_{j \in \mathbb{Z}} \max_{i=1,\dots,d} 
|x_{n,j,i} - x_{\infty,j,i}| \to 0.
\]
The continuity of each component then follows from
\[
\left|\sup_j |x_{\infty,j,i}| - \sup_j |x_{n,j,i}|\right| 
\leq \sup_j \left| |x_{\infty,j,i}| - |x_{n,j,i}| \right|
\leq \sup_j |x_{\infty,j,i} - x_{n,j,i}|
\leq \|\mathbf{x}_n - \mathbf{x}_\infty\|_0.
\]
Since for every \(\mathbf{x}  \in l_0 ^d \setminus {\boldsymbol{0}}\) we have \(\norm{\mathbf{x}}_0 = \max_i \sup_j \abs{x_{j,i}} \), i.e. the norm of the sequence is exactly the sup-norm of the vector  \((\sup_{j\in \mathbb{Z} } \abs{x_{j,1}}, \dots, \sup_{j\in \mathbb{Z} } \abs{x_{j,d}})\) and the map $y \mapsto \mathbf{y} /\norm{\mathbf{y} }$ is continuous, the composition $S$ is continuous.
\end{proof}

Recalling~\eqref{eq:phi}, we can write 
$\omega_n = S_\infty(\mathbb{X}_{n,1})$, 
where $S_\infty \colon l_\infty^d \setminus \{\infty\} \to 
\Sigma_d $ is the composition
\[
l_\infty^d \setminus \{\infty\} 
\xrightarrow{\;\mathcal{P}\;} 
l_0^d \setminus \{\mathbf{0}\} 
\xrightarrow{\;S\;} 
\set{x \in \mathbb{R}_+ ^d \colon \max_{i=1,\dots,d} x_i = 1}
\xrightarrow{\;F\;} 
\Sigma_d,
\]
and $F \colon \mathbb{R}^d \setminus \{\mathbf{0}\} \to 
\overline{\mathbb{R}}^d \setminus \{\boldsymbol{\infty}\}$ is given by 
$F = (f, \dots, f)$ componentwise, with
\[
f(x) \coloneqq \begin{cases} 1/x & x \neq 0, \\ 
\infty & x = 0. \end{cases}
\]
Each factor in the composition is continuous, hence so is 
$S_\infty$.

\begin{corollary}\label{cor:proj}
Under the assumptions of Proposition~\ref{prop:gen-Z},
\begin{equation}\label{eq:SZSK}
\mathcal{L}(S_\infty(\mathbf{Z})) = 
\mathcal{L}(S_\infty(\mathbf{K})) = 
\mathcal{L}\!\left(\inf_j \abs{K_{j,1}}, \dots, \inf_j \abs{K_{j,d}}\right) = Q,
\end{equation}
and the law of $\mathbf{Z}$ disintegrates over $Q$ as
\begin{equation}\label{eq:Z-disint}
\mathcal{L}(\mathbf{Z}) = \int_{\Sigma_d} 
\mathcal{L}(\mathbf{Z} \mid S_\infty(\mathbf{Z}) \in 
d\omega)\, Q(d\omega) \eqqcolon \int_{\Sigma_d} 
\mathcal{L}(\mathbf{Z}^\omega)\, Q(d\omega).
\end{equation}
\end{corollary}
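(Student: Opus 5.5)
The plan is to obtain $Q$ as the limit in law of $\omega_n = S_\infty(\mathbb{X}_{n,1})$ conditioned on $\rho_n<1$, and to compare that with Proposition~\ref{prop:gen-Z} via the continuous mapping theorem. There are three steps.

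\emph{Step 1 (two expressions for the same limit).} By~\eqref{eq:Hvague} together with Corollary~\ref{SM:cor}, for every $B\subset\Sigma_d$ with $H(\partial B)=0$,
\[
\mu(\omega_n\in B\mid \rho_n<1)=\frac{k_n\mu(\rho_n<1,\ \omega_n\in B)}{k_n\mu(\rho_n<1)}\longrightarrow \frac{H(B)}{\Gamma(\bbtheta)}=Q(B).
\]
Here Lemma~\ref{lem:HG} supplies $H(\Sigma_d)=\Gamma(\bbtheta)$. On the other hand, $\{\rho_n<1\}=\{\mathbb{X}_{n,1}\in\mathscr{W}^c_{r_n}(U(\mathbf 1))\}$ and $\omega_n=S_\infty(\mathbb{X}_{n,1})$. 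Moreover $S_\infty$ is shift invariant, since suprema over $j\in\mathbb Z$ do not change under $\sigma$, so it factors through $\tilde\pi$ as a continuous map on $\tilde l^d_\infty\setminus\{\infty\}$. Applying Proposition~\ref{prop:gen-Z} with $\tau=1$ and the continuous mapping theorem then gives $\mathcal L(\omega_n\mid\rho_n<1)\to\mathcal L(S_\infty(\mathbf Z))$. Uniqueness of weak limits yields $\mathcal L(S_\infty(\mathbf Z))=Q$.

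\emph{Step 2 (passing from $\mathbf Z$ to $\mathbf K$ and to the coordinate infima).} Write $\mathbf Z=L_Z\mathbf K$ with $L_Z>0$ a scalar. Then $\mathcal P(\mathbf Z)=L_Z^{-1}\mathcal P(\mathbf K)$, and $S$ is $0$-homogeneous, so $S_\infty(\mathbf Z)=S_\infty(\mathbf K)$ pointwise; the equality of laws follows immediately. For the explicit form, note $\|\mathcal P(\mathbf K)\|_0=1$ by construction. Hence $S(\mathcal P(\mathbf K))_i=\sup_j|K_{j,i}|^{-1}=(\inf_j|K_{j,i}|)^{-1}$, and applying $F$ gives $S_\infty(\mathbf K)=(\inf_j|K_{j,1}|,\dots,\inf_j|K_{j,d}|)$. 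The normalization $\|\mathcal P(\mathbf K)\|_0=1$ is exactly $\min_i\inf_j|K_{j,i}|=1$, so this vector lies in $\Sigma_d$.

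\emph{Step 3 (disintegration).} Equation~\eqref{eq:Z-disint} is then the standard disintegration of a probability law on the Polish space $l^d_\infty$ along the measurable (indeed continuous) map $S_\infty$, whose pushforward is $Q$ by Step 1. Regular conditional distributions exist because the spaces are Polish.

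The main obstacle is the continuity argument in Step 1. Two things need care:
\begin{itemize}
\item[(a)] the weak convergence in Proposition~\ref{prop:gen-Z} lives on the quotient $\tilde l^d_\infty$, so one must check that $S_\infty$ descends continuously to the quotient metric. I would argue via the pulled-back metric and shift invariance of the coordinatewise suprema.
\item[(b)] For the continuous mapping theorem one needs $Q$-continuity sets to pull back appropriately. Since $S_\infty$ is genuinely continuous, $\partial S_\infty^{-1}(B)\subset S_\infty^{-1}(\partial B)$, whose $\mathbf Z$-probability equals $Q(\partial B)=0$. This closes the argument.
\end{itemize}
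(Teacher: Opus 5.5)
Your proof is correct and follows the paper's route. You apply the continuous mapping theorem to Proposition~\ref{prop:gen-Z} through the shift-invariant map $S_\infty$, identify the same conditional limit with $Q$ via the spectral-measure limit, and conclude by uniqueness of weak limits; then $0$-homogeneity under $\mathbf Z = L_Z\mathbf K$ gives the other equalities and the disintegration theorem on Polish spaces gives the rest (your explicit check that $\|\mathcal{P}(\mathbf K)\|_0=1$ simply spells out what the paper leaves implicit).

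One small caveat concerns remark (b). Its justification is circular, because the $\mathbf Z$-probability of $S_\infty^{-1}(\partial B)$ equals $Q(\partial B)$ only once $\mathcal{L}(S_\infty(\mathbf Z))=Q$ is already known. The remark is also unnecessary. For an everywhere continuous $S_\infty$, the continuous mapping theorem yields weak convergence outright, and, as in the paper, each of the two limits is identified on its own class of continuity sets before invoking uniqueness.
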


\begin{proof}
Since $S_\infty(\tilde{\pi}(\mathbf{x})) = S_\infty(\mathbf{x})$ for every $\mathbf{x} \in l_\infty^d$, 
the continuous mapping theorem together with Proposition~\ref{prop:gen-Z} gives that, for every \(B \subset \Sigma_d \) such that \(\mu (S_\infty (\mathbf{Z} ) \in \partial B) = 0\),
\[
\lim_{n \to \infty} \mu\!\left(S_\infty(\mathbb{X}_{n,1}) \in B \mid 
\mathscr{W}^c_{r_n}(U_n(\bbtau^*))\right) = \mu(S_\infty(\mathbf{Z}) \in B).
\]
On the other hand, from the definition of the spectral measure in Section~\ref{sec:spectral_measure} for every $B \subset \Sigma_d$ with $Q(\partial B) = 0$,
\[
\lim_{n \to \infty} \mu\!\left(S_\infty(\mathbb{X}_{n,1}) \in B \mid 
\mathscr{W}^c_{r_n}(U_n(\bbtau^*))\right) 
= \lim_{n \to \infty} 
\frac{\mu\!\left(\omega_n \in B,\, 
\mathscr{W}^c_{r_n}(U_n(\bbtau^*))\right)}
{\mu\!\left(\mathscr{W}^c_{r_n}(U_n(\bbtau^*))\right)} = Q(B).
\]
From uniqueness of the weak limit, we conclude that \ $Q = \mathcal{L}(S_\infty(\mathbf{Z})) $.
The other equality comes from the decomposition $\mathbf{Z} = (L_\mathbf{Z}, \mathbf{K})$ 
as in~\eqref{eq:decomposition-Z}. The disintegration~\eqref{eq:Z-disint} is then the 
standard disintegration theorem for probability measures on Polish spaces.
\end{proof}
\begin{example}
    We continue the example considered above, in which we computed the transformed anchored tail process. According to Corollary~\ref{cor:proj}, the spectral measure can be obtained as the projection of the tail process, so we can already say that the profile distribution will be
    \[
        Q(d\omega) = \frac{2}{3}\,\delta_{(1,2)} + \frac{1}{3}\,\delta_{(\infty,1)}.
    \]
    We check this by using the representation~\eqref{eq:spectral-representation} 
    after computing $\Gamma(\bbtheta\,\bbtau)$.

    First, we recall that $U_n(\bbtau)$ consists, for $n$ sufficiently large, 
    of disjoint sets $B_j(\tau_j)$, $j = 1,2$. Hence,
    \[
        \hat{\Gamma}(\bbtau) 
        = \lim_{n \to \infty} n\,\mu(\mathbb{X}_0 \nleq \mathbb{u}_n(\bbtau)) 
        = \tau_1 + \tau_2.
    \]
    We now compute $\vartheta(\bbtau)$ via formula~\eqref{eq: eimv}. 
    Since $\mathcal{O}^+(\zeta_2)$ does not contain any point at which
    the observables are maximal, we have, again for $n$ sufficiently large,
    \[
        A^{(q)}_n = A^{(1)}_n \quad \text{for all } q \geq 1.
    \]
    Now, for every $\alpha \in [0,1]$ 
    we define\footnote{We are using the homogeneity property of the 
    Lebesgue measure.}
    \[
        \gamma_1(\alpha) 
        \coloneqq \lim_{n \to \infty} \frac{\mu\!\left(B_1(\alpha) \setminus f^{-1}(B_2(1-\alpha))\right)}
                       {\mu(B_1(\alpha ))}, 
        \qquad 
        \gamma_2(\alpha) 
        \coloneqq \lim_{n \to \infty} \frac{\mu(B_2(1-\alpha))}{\mu(B_2(\alpha ))},
    \]
    so that, setting $\alpha = \dfrac{\tau_1}{\tau_1 + \tau_2}$,
    \[
        \vartheta(\bbtau) 
        \coloneqq \alpha\,\gamma_1(\alpha) + (1-\alpha)\,\gamma_2(\alpha) 
        = \max\!\left(\alpha - \frac{1-\alpha}{2},\, 0\right) + (1-\alpha).
    \]
    This can be rewritten as
    \[
        \vartheta(\bbtau) 
        = \max\!\left(\alpha,\, \frac{1-\alpha}{2}\right) 
          + \frac{1}{2}(1-\alpha).
    \]
    Since 
    $\Gamma(\bbtheta\,\bbtau) 
     = \vartheta(\bbtau)\,\hat{\Gamma}(\bbtau) 
     = \max\!\left(\tau_1,\,\dfrac{\tau_2}{2}\right) + \dfrac{1}{2}\tau_2$, 
    it follows immediately that
    \[
        H = \delta_{(1,2)} + \frac{1}{2}\,\delta_{(\infty,1)},
    \]
    from which the representation of $Q$ announced at the beginning of the 
    example follows.
\end{example}

Now we can rephrase the convergence of the cluster measure in view of Corollary~\ref{cor:proj}

\begin{proposition}
	Under the assumption used to define the transformed anchored tail process and condition \(\mathcal{D}^{\prime} \), we have
	\begin{equation}
		k_n \mu \left( \tilde{\pi }(\mathbb{X}_{n,1}) \in \cdot  \right) \to \Gamma(\bbtheta)(Leb\times \mu_{\tilde{\textbf{K}} })\circ h\eqqcolon \eta (\cdot) = \int_{\Sigma_d}  \int_0 ^\infty \mathbb{E} \left(\mathbbm{1}_{(\cdot)} s \mathbf{K} ^\omega \right)ds H(d \omega)
	\end{equation}
	in \(\omega^\#(\mathcal{M}^\# _{l_\infty ^d \setminus{\infty }})\) topology, where \(\mu_{{\tilde{\textbf{K} }}}\) is the distribution of  \(\tilde{\pi} \left(\mathbf{K} \right)\)
\end{proposition}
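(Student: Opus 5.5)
The convergence part is exactly Proposition~\ref{prop:cluster_conv}, so I will quote it and only prove the second equality, an identity between two measures on $\tilde l^d_\infty\setminus\{\boldsymbol\infty\}$. The plan is to disintegrate the law of $\mathbf{K}$ over the profile distribution $Q$ using Corollary~\ref{cor:proj}, and then trade the prefactor $\Gamma(\bbtheta)$ for the total mass of $H$ using Lemma~\ref{lem:HG}.

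\emph{Step 1 (unfold $\eta$).} By the definition of $h$ in~\eqref{eq:decomposition-Z}, for a Borel set $C$ we have
\[
\eta(C)=\Gamma(\bbtheta)\int_0^\infty \mu\bigl(s\,\tilde{\mathbf K}\in C\bigr)\,ds .
\]
This is the same computation that closes the proof of Proposition~\ref{prop:cluster_conv}, where $\mu(\tau\mathbf Z\in\mathbb A)$ was rewritten as an integral in $s$.

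\emph{Step 2 (disintegrate $\mathbf K$ over profiles).} Corollary~\ref{cor:proj} gives $\mathcal L(S_\infty(\mathbf K))=Q$. Since $\Sigma_d$ and $l^d_\infty$ are Polish, the disintegration theorem gives a regular conditional law $\mathbf K^\omega\overset{\mathcal L}{=}\mathbf K\mid S_\infty(\mathbf K)=\omega$ with
\[
\mathcal L(\mathbf K)=\int_{\Sigma_d}\mathcal L(\mathbf K^\omega)\,Q(d\omega).
\]
This is compatible with~\eqref{eq:Z-disint}. Because $\mathbf Z=L_Z\mathbf K$ with $L_Z$ independent of $\mathbf K$, and $S_\infty$ is scale invariant ($S_\infty(c\mathbf x)=S_\infty(\mathbf x)$ for $c>0$, as $S$ normalizes by $\norm{\cdot}_0$), we get $\mathbf Z^\omega=L_Z\mathbf K^\omega$. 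Hence the disintegrations of $\mathbf Z$ and $\mathbf K$ match. The map $\tilde\pi$ also commutes with $S_\infty$, so the same holds after projecting to the quotient.

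\emph{Step 3 (Fubini and normalisation).} Substituting Step 2 into Step 1 and applying Tonelli's theorem, since everything is nonnegative, gives
\[
\eta(C)=\Gamma(\bbtheta)\int_{\Sigma_d}\int_0^\infty \mathbb E\bigl(\mathbbm 1_C(s\mathbf K^\omega)\bigr)\,ds\,Q(d\omega).
\]
Lemma~\ref{lem:HG} gives $H(\Sigma_d)=\Gamma(\bbtheta)$, so $\Gamma(\bbtheta)Q=H$, which is the claimed expression. The final step is to check that this identity of measures, valid on all Borel sets, is compatible with the $\omega^\#$ limit. Both sides are the same boundedly finite measure, so the convergence statement transfers verbatim.

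\emph{Main obstacle.} I expect the delicate point to be Step 2, namely making $\omega\mapsto\mathcal L(\mathbf K^\omega)$ a genuine measurable kernel and showing it does not depend on whether one conditions $\mathbf Z$ or $\mathbf K$. My approach is to invoke the independence of $L_Z$ and $\mathbf K$, established via the argument of~\cite[Proposition~3.14]{freitas2025enrichedfunctionallimittheorems}, together with the scale invariance of $S_\infty$. A minor secondary issue is that $\mathbf K^\omega$ is defined only $Q$-a.e., but this does not matter because it is integrated against $H\ll Q$.
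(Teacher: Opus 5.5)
Your proposal is correct and follows the paper's route. The paper cites Proposition~\ref{prop:cluster_conv} for the convergence and calls the identity a direct consequence of Corollary~\ref{cor:proj}; your Steps 1--3 (unfolding $h$, disintegrating $\mathbf K$ over $Q$, Tonelli, and $\Gamma(\bbtheta)Q=H$ from Lemma~\ref{lem:HG}) spell out that one line, and your explicit definition of $\mathbf K^\omega$ by conditioning on $S_\infty(\mathbf K)$, with the check $\mathbf Z^\omega \overset{\mathcal L}{=} L_Z\mathbf K^\omega$, usefully fills in an object the paper uses without defining.
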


\begin{proof}
	From Proposition~\ref{prop:cluster_conv} we just need to show the last equality, but that is a direct consequece of Corollary~\ref{cor:proj}. 
\end{proof}

The following lemma is a direct consequence of the preceding theorem and will enable us to recover the limiting distribution of the componentwise maximum, as stated in~\cite{aimino2026multivariate}. 

Then from theorem~\ref{teo:convergence} we have 

\[
\lim_{n \to \infty} \mu\left( \mathbb{M}_n \leq \mathbb{u}_n (\bbtau)\right) = \lim_{n \to \infty}  \mu \left( N_n([0,1]\times U(\bbtau)) = 0 \right) =  e^{-\eta (\tilde{U}(\bbtau))}
\] 
hence we just need to compute the measure of \(\hat{U}(\bbtau)\), which is given by the cluster measure. 

\begin{lemma}
	\label{lemma:avoidance}
	Under the assumption used to define the transformed anchored tail process,  for every \(\bbtau = (\tau _1,\dots ,\tau_d) \in (0,\infty )^d\) and \(U (\bbtau) \) above, we have
	\[
		k_n \mu \left( \tilde{\pi }(\mathbb{X} _{n,1}) \in \tilde{U}(\bbtau) \right) \to \Gamma(\bbtheta\bbtau).
	\]
\end{lemma}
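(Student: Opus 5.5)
The approach is to identify the event $\{\tilde{\pi}(\mathbb{X}_{n,1}) \in \tilde{U}(\bbtau)\}$ with a block-exceedance event, and then apply the blocking estimates of Proposition~\ref{prop 3.11} and Corollary~\ref{SM:cor}.

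By definition~\eqref{eq:A-set-0} and~\eqref{eq:A-bold}, $\tilde{\pi}(\mathbb{X}_{n,1}) \in \tilde{U}(\bbtau)$ holds iff some shift of $\mathbb{X}_{n,1}$ lies in $U(\bbtau)$. That is, iff there is $j \in \{0,\dots,r_n-1\}$ with $\mathbb{u}_n^{-1}(\mathbf{X}_j)\odot\nu(\mathbf{X}_j) \in L(\bbtau)$. The components outside the block equal $\boldsymbol{\infty}$, so they never belong to $L(\bbtau)$. This gives
\[
\mu\bigl(\tilde{\pi}(\mathbb{X}_{n,1}) \in \tilde{U}(\bbtau)\bigr) = \mu\bigl(\mathscr{W}^c_{r_n}(U_n(\bbtau))\bigr) = \mu(m_n \ngeq \bbtau).
\]
Corollary~\ref{SM:cor} (equivalently~\eqref{eq:SM-kn}) then yields $k_n\mu(\cdot) \to \vartheta(\bbtau)\hat{\Gamma}(\bbtau) = \Gamma(\bbtheta\bbtau)$. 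Here $U(\bbtau)$ is of the form $A\in\mathscr{F}$ with $N=0$, up to writing $L(\bbtau)$ as a finite union of products of sets in $\mathscr{F}_{\mathcal V}$. So Proposition~\ref{prop 3.11} applies directly, with $A_{n,1}=U_n(\bbtau)$ and $A^{(q)}_{n,1}=U_n^{(q)}(\bbtau)$, and the formula for $\vartheta$ in~\eqref{eq: eimv} gives the limit.

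As a consistency check, and as an alternative route, I would also derive the result from the weak$^\#$ convergence in Proposition~\ref{prop:cluster_conv}. The set $\tilde{U}(\bbtau)$ is bounded, since $\inf_j \norm{x_j}<\max_i\tau_i$ on it. So it suffices to show $\eta(\partial\tilde{U}(\bbtau))=0$ and compute $\eta(\tilde{U}(\bbtau))$.

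The boundary is contained in the event that some $\abs{s K_{j,i}}=\tau_i$. By the product structure $\mathrm{Leb}\times\mu_{\tilde{\mathbf{K}}}$ and Fubini, for each fixed $(j,i)$ with $K_{j,i}\neq\infty$ this is a single value of $s$. There are countably many $(j,i)$, so the boundary is $\eta$-null.

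Computing $\eta(\tilde{U}(\bbtau))$ through the spectral form of the preceding proposition gives
\[
\int_{\Sigma_d}\int_0^\infty \mathbbm{1}\{s\,\omega \ngeq \bbtau\}\,ds\,H(d\omega)=\int_{\Sigma_d}\max_i \frac{\tau_i}{\omega_i}\,H(d\omega),
\]
using that $\inf_j\abs{K^\omega_{j,\cdot}}=\omega$ by Corollary~\ref{cor:proj}. This equals $\Gamma(\bbtheta\bbtau)$ by~\eqref{eq:spectral-representation}.

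The main obstacle is the identification step. One must verify that the quotient-set description $\tilde U(\bbtau)$ matches the block event exactly. This includes handling the convention that $\nu(\mathbf X_j)$ and the inequality $\ngeq$ use strict inequalities consistently with $U_n(\bbtau)$. One must also check that $L(\bbtau)$ lies in $\mathscr F$, or at least that Proposition~\ref{prop 3.11}'s argument only needs $A_{n,1}\subseteq\{\mathbf X_0\nleq \mathbb u_n(\mathbb h)\}$, which is clear here with $\mathbb h=\bbtau$. Once that is settled, the first route closes the proof immediately, and the spectral computation serves as a confirmation.
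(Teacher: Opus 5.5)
Your proposal is correct, and your first route differs from the paper's. The paper proves the lemma only through what you call the alternative route. It invokes Proposition~\ref{prop:cluster_conv} and reduces the claim to computing $\eta(\tilde U(\bbtau))$. It then shows that $s\mathbf{K}^\omega\in\mathbb{U}(\bbtau)$ iff $s<\max_i \tau_i/\inf_j\abs{\pi_i(\mathbf{K}^\omega_j)}$, and concludes with Corollary~\ref{cor:proj} and \eqref{eq:spectral-representation}.

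Your primary argument notes instead that $\mathbb{X}_{n,1}$ equals $\boldsymbol{\infty}$ outside the indices $0,\dots,r_n-1$. Hence the event $\{\tilde\pi(\mathbb{X}_{n,1})\in\tilde U(\bbtau)\}$ is exactly $\mathscr{W}^c_{r_n}(U_n(\bbtau))=\{m_n\ngeq\bbtau\}$, and the lemma is a restatement of Corollary~\ref{SM:cor}, i.e.\ of \eqref{eq:SM-kn}. This uses only $\mathcal{D}^{\prime}$ and the existence of $\vartheta(\bbtau)$ and $\hat\Gamma(\bbtau)$. Both routes need $\mathcal{D}^{\prime}$, though the statement leaves it implicit. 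Your route needs neither the tail process nor the spectral measure. It also shows that the paper's route is roundabout: \eqref{eq:spectral-representation} was itself obtained from $\Lambda(L_{\bbtau})=\Gamma(\bbtheta\bbtau)$, i.e.\ from \eqref{eq:SM-kn}, which is the very limit the lemma asserts.

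You need not worry about whether $U(\bbtau)\in\mathscr{F}$, because Corollary~\ref{SM:cor} is already stated for $U_n(\bbtau)$. Your remark that $L(\bbtau)$ is a finite union of products of sets of $\mathscr{F}_{\mathcal V}$ is not accurate anyway, since the conditions $\abs{x_i}<\tau_i$ are ball conditions.

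What the paper's route buys is the identification of the limit with $\eta(\tilde U(\bbtau))$. This is how the lemma is used right afterwards, in $\lim_n\mu(\mathbb{M}_n\le\mathbb{u}_n(\bbtau))=e^{-\eta(\tilde U(\bbtau))}$, and it also checks that the cluster measure built from $\mathbf{K}$ is consistent with $H$. That route tacitly requires $\tilde U(\bbtau)$ to be a bounded $\eta$-continuity set. The paper does not verify this; your Fubini argument over $s$ does. In fact, your direct limit together with that boundary argument and Proposition~\ref{prop:cluster_conv} already gives $\eta(\tilde U(\bbtau))=\Gamma(\bbtheta\bbtau)$ without any spectral computation.
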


\begin{proof}
	From Proposition~\ref{prop:cluster_conv}, we just need to show that
	\[
			\int_{\Sigma_d} \int_0 ^{\max_i \tau_i} \mu \left( s\mathbf{K} ^\omega \in \mathbb{U} (\bbtau)\right)ds H(d\omega) = \Gamma(\bbtheta \bbtau)
	\]
	Now, for a fixed \(\omega \in \Sigma_d\) we have 
	\begin{align*}
		s \mathbf{K}^\omega \in \mathbb{U}(\bbtau) \iff \exists j \in \mathbb{Z} \colon s  \mathbf{K}^\omega _j  \in U(\bbtau) \iff s \in \bigcup_{i=1}^{d} \bigcup_{j \in \mathbb{Z}} \left\{ s \lvert \pi_i(\mathbf{K} _j^\omega) \rvert < \tau_i \right\} = \bigcup_{j \in \mathbb{Z}} \bigcup_{i=1}^{d} \left\{ s \lvert \pi_i(\mathbf{K} _j^\omega) \rvert < \tau_i \right\} 
	\end{align*}
	where we can freely exchange the unions, since \(i\) ranges in a finite set.  
	Hence,  
	\begin{align*}
		& \int_{\Sigma_d}  \int_0 ^{\max_i \tau_i} \mu \left( s\mathbf{K} ^\omega \in \mathbb{U} (\bbtau)\right)ds H(d \omega) = \int_{\Sigma_d} \int_0 ^{\max_i \tau_i}\mu \left( \bigcup_{j \in \mathbb{Z}} \bigcup_{i=1}^{d} \left\{ s \lvert \pi_i(\mathbf{K} _j^\omega) \rvert < \tau_i \right\} \right)ds H(d \omega) = \\
		&\int_{\Sigma_d}  \int_0 ^{\max_i \tau_i}\mu \left( s < \max\left(\frac{\tau_i}{\inf_{j\in \mathbb{Z} } \abs{\pi_i(\mathbf{K}^\omega _j)}}\right)\right) ds H(d\omega)=
	 	\int_ {\Sigma_d}  \max\left(\frac{\tau_i}{\inf_{j\in \mathbb{Z} } \abs{\pi_i(\mathbf{K}^\omega _j)}}\right) H(d\omega)= \\
		& = \int_{\Sigma_d} \max\left(\frac{\tau_i}{\omega_{i}}\right) H(d\omega) = \Gamma(\bbtheta \bbtau)
	\end{align*}
	where the second to last equality holds from Corollary~\ref{cor:proj}

\end{proof}

Let us take a step back and try to understand the consequences of Corollary~\ref{cor:proj}. Consider the following diagram 
\[
\begin{tikzcd}
{\Gamma(\bbtheta \bbtau)} & {} & H &&& {} & {(\mathbf{Z}_j)_{j\in \mathbb{Z}}} \\
&&&& {Q} \\
\\
&&&&&& {(\mathbf{Y})_{j\in \mathbb{Z} }}
\arrow["{\text{Eq.}~\eqref{eq:spectral-representation}}", Rightarrow, 2tail reversed, from=1-1, to=1-3]
\arrow[Rightarrow, from=1-3, to=2-5]
\arrow["{\text{Corollary}~\ref{cor:proj}}", equals, 2tail reversed, from=2-5, to=1-7]
\arrow[Rightarrow, from=4-7, to=1-7]
\arrow[Rightarrow, from=4-7, to=2-5]
\arrow["{\text{Proposition~\ref{prop: tetaY} and~\ref{prop:gammaY}}} "
  shift left,
  curve={height=60pt},
  equals, 2tail reversed,
  from=1-1,
  to=4-7,
]
\end{tikzcd}
\]
The diagram above summarizes the relationships between the objects introduced so far.
In general, it is not possible to reconstruct the full tail process \(\mathbf{Y} \) or \(\mathbf{Z} \) from the spectral measure \(H\)  alone. Corollary~\ref{cor:proj} confirms that the profile measure, though a useful practical tool, is a coarser object than the full process \(\mathbf{Z} \). By encoding only the most relevant dependence information between variables, it offers significant analytical and computational simplifications. The following example illustrates a setting in which the spectral measure can be computed explicitly, while the full tail process appears difficult, and possibly impossible 
to determine in closed form. This motivates the use of the spectral measure as a more tractable object 
that still captures the essential extremal dependence structure of the system.

\begin{example}

Let $\mathcal{X} = [0,1]^2$ and $T(x,y) = (2x \bmod 1, 2y \bmod 1)$, the standard dyadic map on the unit square. The invariant measure is two-dimensional Lebesgue measure $\mu$, and the system has summable decay of correlations against \(L^1\), so that $\mathcal{D} $ and $\mathcal{D} ^{\prime} $ are 
satisfied. We consider two observables maximised respectively on
\[
\mathcal{Z}_1 = \Delta 
\qquad \text{and} \qquad
\mathcal{Z}_2 = \Delta \cap \left[\tfrac{1}{3}, \tfrac{2}{3}\right]^2
\qquad \text{and},
\]
where $\Delta$ denotes the diagonal of $\mathcal X$. The corresponding exceedance regions are
rectangles\footnote{More precisely, rectangles in the local coordinates given by $\Delta$ and its
orthogonal complement, up to a negligible term of order $o(n^{-2})$.} with base $\mathcal Z_i$ and width $h_i$ chosen so that 
$n\mu(U_{n,i}(\tau_i)) \to  \tau_i$, as in~\eqref{eq:freq-univ}, giving $h_{1,n} = \tau_1/n$ and $h_{2,n} = 3\tau_2/n$.

\medskip
Since $\Delta$ is invariant under $T$, the relevant one-dimensional dynamics is governed by $f \colon [0,1] \to [0,1]$, $f(x) = 2x \bmod 1$. Setting $\Delta_0 \coloneqq [\frac{1}{3}, \frac{2}{3}]$, one checks that $f(\Delta_0) = \Delta_0^c$. For each $j \geq 1$ we denote by $R_j \subset [0, \frac{1}{3}]$ and 
$R_j^* \subset [\frac{2}{3}, 1]$ the two connected components of $f^{-j}(\Delta_0)$ such that
\[
f(R_j) = R_{j-1}, \quad f(R_j^*) = R_{j-1}^*, \quad
f^j(R_j) = f^n(R_j^*) = \Delta_0 \qquad (j \geq 1).
\]
These sets generate a sequence of refining partitions $(\mathcal{P}_m)_{m \geq 0}$ of $\Delta_0$ based on the successive returns to the set. Denote
$\mathcal{P}_0 = \{\Delta_0\}$ and, for $m \geq 1$,
\[
\mathcal{P}_m = \{\mathcal{I}_w \mid w \in \mathbb{N}^m\},
\qquad
\mathcal{I}_{w_1,\dots,w_m} \coloneqq \Delta_0 \cap 
\bigcap_{i=1}^{n} f^{-\left(i + \sum_{j=1}^{i-1}w_j\right)}
(R_{w_i} \cup R_{w_i}^*).
\]
The index $w = (w_1, \dots, w_m)$ records the successive excursion lengths outside $\Delta_0$: a point 
$x_0 \in \mathcal{I}_{w_1,\dots,w_m}$ spends $w_i$ steps in $\Delta_0^c$ at its $i$-th excursion, and first returns to $\Delta_0$ at time $m + \sum_{i=1}^m w_i$. For instance, 
$x_0 \in \mathcal{I}_{1,1}$ follows the itinerary
\[
x_0 \in \Delta_0 \;\to\; f(x_0) \in R_1 \cup R_1^* 
\;\to\; f^2(x_0) \in \Delta_0 \;\to\; 
f^3(x_0) \in R_1 \cup R_1^* \;\to\; f^4(x_0) \in \Delta_0.
\]
A direct computation gives $|\mathcal{I}_w| = \prod_{i=1}^n 2^{-w_i} |\Delta_0|$, which is consistent 
with $\sum_{w \in \mathbb{N}^n} |\mathcal{I}_w| = |\Delta_0|$.

\begin{figure}[H]
    \centering
    \includegraphics[width=0.6\textwidth]{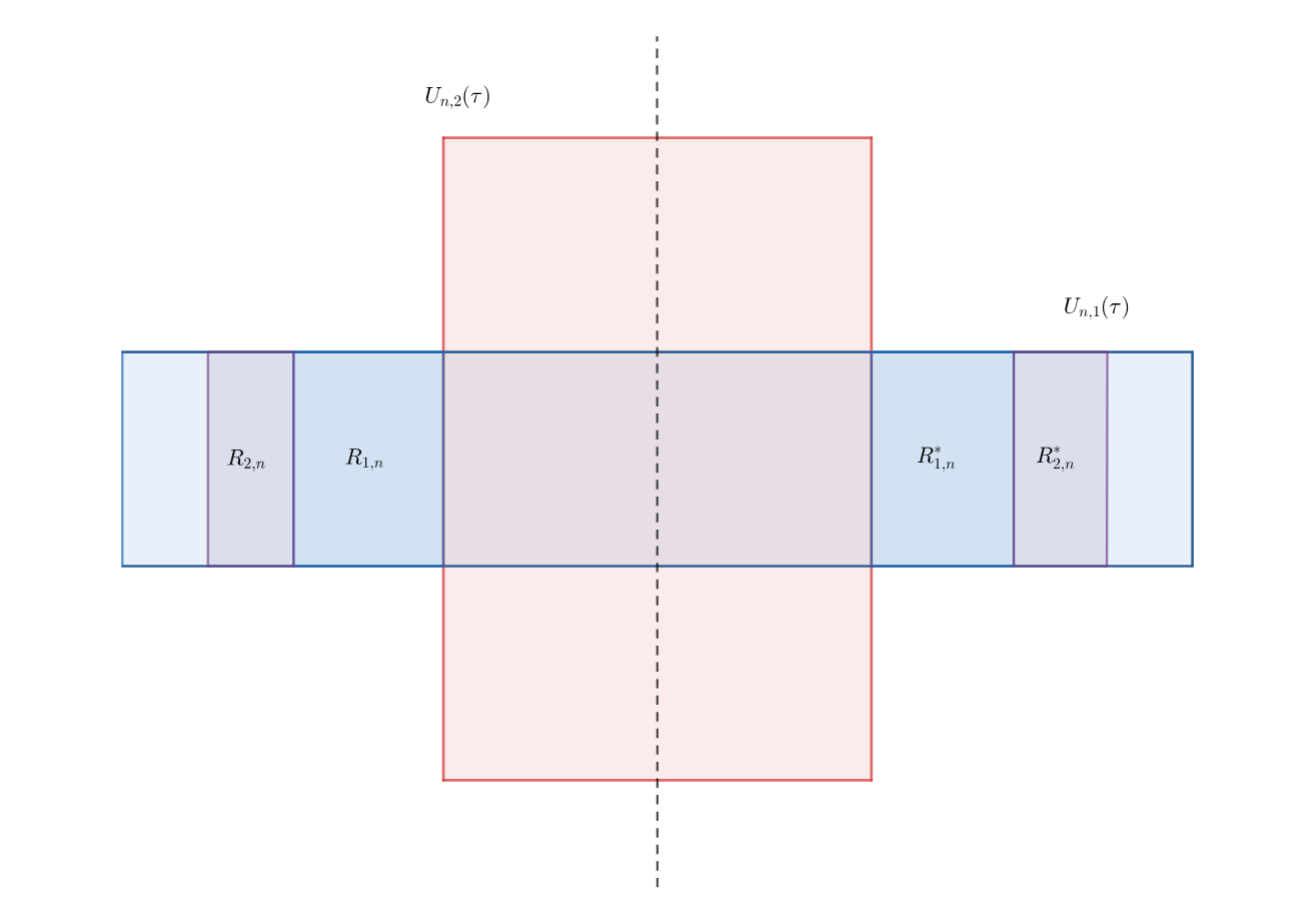}
    \caption{A scheme of the extremal sets \(U_{n,1}(\tau_1)\), \(U_{n,2}(\tau_2)\) and the sets \(R_{j,n}, R_{j,n} ^*\), when \(\tau_2 = \tau_1\)}
\end{figure}

\medskip
For every \(m \geq 1\), the partition $\mathcal{P}_m$ naturally induces a partition on $U_{n,2}(\tau_2)$ denoted as $\mathcal{V}_m$, and similarly the sets \((R_j ,R^* _j)_{j \geq 1}\) induce a partition on \(U_{n,1}(\tau_1) \cap \left[1/3, 2/3\right]^c \times [0,1]\). Explicitly, under this local coordinates
\[
\mathcal{V}_m = \left\{ \mathcal{I}_w \times \left(-\tfrac{h_{2,n}}{2}, \tfrac{h_{2,n}}{2}\right) \;:\; w \in \mathbb{N}^m \right\},
\]
and the sets $(R_j, R_j^*)_{j\geq1}$ induce, in the same way, the partition
\[
\left\{R_{j,n} \coloneqq R_j \times \left(-\tfrac{h_{1,n}}{2}, \tfrac{h_{1,n}}{2}\right),\; R _{j,n} ^* \coloneqq  R_j^* \times \left(-\tfrac{h_{1,n}}{2}, \tfrac{h_{1,n}}{2}\right) \;:\; j \geq 1 \right\}.
\] For $T^{r_n}(x) \in \mathcal I_w \times (-h_{2,n}/2, h_{2,n}/2)$ with $w=(w_1,\dots,w_m)$, 
the tail process can be described explicitly up to time $N \coloneqq m + \sum_{i=1}^n w_i$. 
The tail process has the form
\[
(\mathbf{Y}_j)_{j =0} ^N = W \begin{pmatrix} 1 \\ \frac{1}{3} \end{pmatrix} \odot 
\left(\overset{*}{B_1},\, B_2,\, \dots,\, B_m\right),
\]
where $W \sim \mathcal{U}[0,1]$. The $j$-th block $B_j$ 
corresponds to the $j$-th excursion of the orbit along $\Delta$, starting close to $\mathcal{Z}_2$, spending $w_j - 1$ 
steps on $\Delta \cap \left( \left[\frac{1}{3},\frac{2}{3}\right]^2 \right)^c$, and returning to $\Delta \cap \left[\frac{1}{3},\frac{2}{3}\right]^2$. Setting $s_j \coloneqq \sum_{i=1}^{j-1} w_i + (j-1)$ 
for the time at which the $j$-th excursion begins, it reads
\[
B_j = 2^{s_j}
\begin{pmatrix} 
1 & 2 & \cdots & 2^{w_j} \\ 
1 & \infty & \cdots & \infty 
\end{pmatrix}.
\]

For example, for $T^{r_n}(x) \in \mathcal{I}_{1,1}$ one has 
$s_1 = 0$, $w_1 = w_2 = 1$, $s_2 = 2$, giving
\[
(\mathbf{Y} _j)_{j=0}^N = W \begin{pmatrix} 1 \\ \frac{1}{3} \end{pmatrix} \odot
\begin{pmatrix}
\infty & \overset{*}{1} & 2 & 4 & 8  \\
\infty & 1 & \infty & 4 & \infty 
\end{pmatrix}
\]
where $*$ marks the time of first entrance into $U_n(\bbtau)$. The tail process is known 
explicitly only up to time $m + \sum_{i=1}^n w_i$; since $|\mathcal{I}_w|$ decays exponentially in 
$\sum_i w_i$, a closed-form expression for the full tail process appears impossible to compute.

\begin{figure}[H]
    \centering
    \includegraphics[width=1\textwidth]{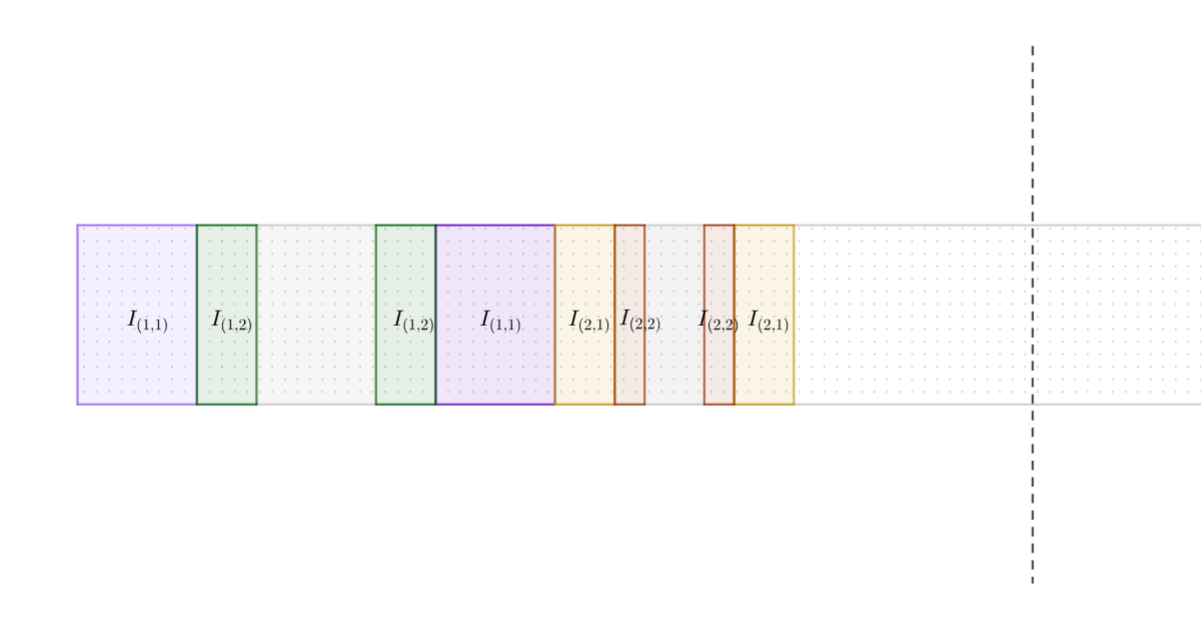}
    \caption{A sketch of the nested partition \(\mathcal{V}_2\) for \(U_n(\bbtau)\cap \left[\frac{1}{3}, \frac{2}{3}\right]^2\). The partition is symmetric with respect to \(1/2\)}
    \label{fig:}
\end{figure}

\medskip Despite the complexity of the tail process, the spectral 
measure is computable. Nonetheless, we show how we can compute the spectral measure and the stable tail dependence function for this system. 
We start from this first simple observation : since our system is expanding the dependence structure will only depend on a finite number of iterates of the system, in particular the minimum number of steps that are needed to visit both extremal sets. 
 Since $T$ is expanding, the extremal dependence structure is determined by the minimum number of iterates needed to visit both $\mathcal{Z}_1$ and 
$\mathcal{Z}_2$. According to where the first exceedance occurs, the componentwise minima of the cluster frequencies are as follows:
\noindent So it is not hard to see that according to where \(x_0\) (the first exceedance) lies, we obtain the following componentwise mimima for the frequencies 

\renewcommand{\arraystretch}{1.5}
\begin{table}[h!]
\centering
\begin{tabular}{c|c}

Location of the first exceedance  & Componentwise mimimum frequency in  \(\Sigma_d \)  \\ 
\hline
\(\Delta_0\) & \((3,1)\)  \\ 
\(R_1 \cup R^* _1\)  & \((\frac{3}{2},1)\) \\
\((R_n \cup R^* _n)_{n \geq 2}\) & \((1, \frac{2^n}{3})\) \\ 

\end{tabular}
\caption{Correspondence between \(x_0\) and component-wise mimima for the frequencies.}
\label{tab:example}
\end{table}
\noindent Since the spectral measure is supported on these componentwise minima, it takes the atomic form
\begin{equation}\label{eq:smes}
H = K_0\,\delta_{(3,1)} + K_1\,\delta_{(3/2,\,1)} 
+ \sum_{n \geq 2} K_n\,\delta_{(1,\,2^n/3)},
\end{equation}
where the weights are determined inductively below.

In order to compute \(\Gamma(\bbtheta \bbtau)\) we decompose the computation according to whether the
first exceedance lands in \(\Delta_0 \times [0,1]\) or in \(\mathcal{R}_{j,n} \coloneqq  R_{j,n} \cup R_{j,n} ^*\) for some \(j \geq 1\).  
\begin{enumerate}[label = (\roman*)]
\item In the first case \(x_0 \in U_{n,1}(\tau_1) \cap (\Delta_0 \times [0,1])^c\), in this case there is an \(n\in \mathbb{N} \) such that  \(x_0 \in U_{n,1}(\tau_1 ) \cap \mathcal{R}_n \) define the asymptotic entry probability
\[
	\gamma_n(\bbtau) \coloneqq  \frac{\mu (U_n (\bbtau) \cap \mathcal{R}_n)}{\mu (U_n(\bbtau))} 
\]
and the conditional probability of not producing a further large observation as  
\[
	\theta_n(\bbtau) \coloneqq  \frac{\mu (U_n(\bbtau) \cap \mathcal{R}_n \setminus \bigcup_{i=1}^{n-1} T^{-j} (\mathcal{R}_{n-j}) \cup T^{-n}(U_n(\bbtau)))}{\mu (U_n(\bbtau) \cap \mathcal{R}_n )}
\] 
\item Similarly, for \(x_0 \in U_n(\bbtau ) \cap (\Delta_0 \times [0,1])\)we define 
\[
	\hat{\gamma}_n(\bbtau) \coloneqq  \frac{\mu (U_n(\bbtau) \cap \Delta_0 \cap T^{-1} (\mathcal{R}_n) )}{\mu (U_n(\bbtau))}
\]
and \(\hat{\theta}_n(\bbtau)\)  
\[
	\hat{\theta}_n(\bbtau) \coloneqq \frac{\mu (U_n(\bbtau) \cap \Delta_0 \cap T^{-1} (\mathcal{R}_n)) \setminus \bigcup_{i=1}^{n} T^{-j} \mathcal{R}_{n-j+1} \cup T^{-(n+1)} (U_n(\bbtau) \cap \Delta _0)}{\mu (U_n(\bbtau) \cap \Delta_0)}.
\] 
\end{enumerate}

Now putting all together, \(\Gamma (\bbtheta \bbtau) = \sum_{i = 1}^\infty \gamma_n(\bbtau) \theta_n(\bbtau) + \hat{\gamma}_n(\bbtau)\hat{\theta }_n(\bbtau) \) is given by the following expansion 
\begin{multline*}
    \Gamma(\bbtheta \bbtau) = \frac{1}{3}\sum_{k=1}^\infty \left(\frac{1}{2}\right)^k \max \left(0, \left(\max (\tau_1 , 3\tau _2) - \max \left(\frac{\tau _1}{2}, \frac{3 \tau_2}{ 2^{k+1}}\right)\right)\right) + \\
    +  \frac{2}{3}\sum_{k=1}^\infty \left(\frac{1}{2}\right)^k \max \left(0, \tau_1 - \max \left(\frac{\tau _1}{2}, \frac{3 \tau _2}{2^k}\right) \right) 
\end{multline*}

\noindent To compute the spectral measure we consider a partition of $\Sigma_d$ using the sets
\[
    S_k = \Set{\frac{3}{2^{k}} \tau_2 < \tau_1 \leq \frac{3}{2^{k-1}} \tau_2}, \quad k \geq 1, \quad S_0 = \set{3\tau _2 < \tau_1}
\]
so that for $\bbtau \in S_k$ we have a closed formula for $\Gamma(\bbtheta\bbtau)$:
\[
    \Gamma(\bbtheta\bbtau)\big|_{S_k}
    = (\tau_1 + \tau_2)
    \left[
        \frac{5}{6} - \frac{1}{3 \cdot 4^k}
        + \frac{\tau_1}{\tau_1 + \tau_2}
        \left(
            \frac{1}{3 \cdot 2^k} - \frac{5}{6} + \frac{1}{3 \cdot 4^k}
        \right)
    \right].
\]

The atoms of the spectral measure can be determined inductively. On each region \(S_k\), the coefficients multiplying \(\tau_2\) depend solely on \(K_0, \dots K_k\). Consequently, the \(n\)-th atom is computable from its predecessors, and the first atom \(K_0 = \frac{1}{2}\), \(K_1 = \frac{1}{4}\) and \(K_n = \frac{1}{3} \frac{1}{2^n}\) for \(n \geq 2\).       
\end{example} 

\section{Systems covered by the theory}
\label{sec:systems}
The results of the previous sections apply to any measure-preserving dynamical system
$T\colon\mathcal{X}\to\mathcal{X}$ whose associated process $(\mathbf{X}_n)_{n\in\mathbb{N}}$,
defined as in~\eqref{eq:sequence-n}, admits a well-defined transformed anchored tail process
and satisfies conditions $\mathcal{D}$ and $\mathcal{D}^{\prime}$: by
Theorem~\ref{teo:convergence}, these are all that is needed for the point
processes to converge. This section identifies concrete classes of systems meeting these
requirements. Both have been verified for a wide
range of dynamical systems.
Condition $\mathcal{D}$ follows from sufficiently fast decay of correlations, with summable rates being enough, and can therefore be verified globally for a large class of systems. Condition $\mathcal{D}'$ is typically more delicate. Decay of correlations against $L^1$ provides a convenient global criterion for it. Without such a strong form of mixing, $\mathcal{D}'$ can still be established in principle, but this usually requires analysing the recurrence of the extreme set.

Consequently, the main examples to which the theory applies are expanding maps, such as:
\begin{itemize}
    \item uniformly expanding maps of the circle or the interval;
    \item Markov maps;
    \item piecewise expanding maps with countably many branches, such as Rychlik maps;
    \item higher-dimensional uniformly expanding maps.
\end{itemize}
For all of these, summable decay of correlations against $L^1(\mu)$ holds, and both $\mathcal{D}$ and $\mathcal{D}'$ can be derived from it.

The framework also covers some systems beyond the uniformly expanding setting. For instance, it was recently shown in~\cite{kirsebom2026visitingtimestatistics} that quadratic maps $T(x) = ax(1-x)$ with a Benedicks-Carleson parameter enjoy exponential mixing for intervals against $L^1$, and thus fit the present setting.Moreover, systems with weaker mixing, such as the LSV map, can be treated whenever their first return map belongs to one of the classes above: one works with the induced map, building a tail process that incorporates the inducing time, and then lifts the conclusions back to the original system.

\subsection{Extending the Spectral Measure for Induced Maps}
\label{sec:spectral-induced}

The aim of this section is to adapt the spectral measure construction
to non-uniformly expanding maps whose induced map satisfies
conditions~$\mathcal{D}$,~$\mathcal{D}'$.  The key novelty with
respect to~\cite{freitas2025enrichedfunctionallimittheorems} is the
presence of $d\ge 2$ observables and the consequent need to work in a
two-level sequence space that records both the \emph{inter-cluster}
structure (at the level of the induced map~$\overline{T}$) and the
\emph{intra-cluster} structure (what happens inside each induced
period).

Let $T\colon\mathcal{X}\to\mathcal{X}$ be a dynamical system on a
Lebesgue space $(\mathcal{X},\mathcal{B},\mu)$.  Fix a base set
$\Delta_0$ with $\mu(\Delta_0)>0$, a countable partition
$\mathcal{P}$ (mod~$\mu$) of~$\Delta_0$, and a return function
$R\colon\Delta_0\to\mathbb{N}$ that is constant on each element
of~$\mathcal{P}$.  We denote by $\overline{T}$ the induced map on
$\Delta_0$, and by $\mu_0$ the restriction of $\mu$ to $\Delta_0$.

Fix $d\ge 2$ observables $\psi_i\colon\mathcal{X}\to\mathbb{R}$ and
write $\bbpsi=(\psi_1,\dots,\psi_d)$.  Define the for each component $i$
\begin{equation}
  \label{eq:induced-max}
  \overline{\psi}_i(x)
  \;\coloneqq\;
  \max_{j=0,\dots,R(x)-1}\lvert\psi_i(T^j(x))\rvert,
\end{equation}
recording the largest value seen in component~$i$ during the entire
induced period starting at~$x$.

The construction of the space of decorating sequence requires two nested levels of sequence spaces,
but adapted to the $L^\infty$ norm on $\mathbb{R}^d$. 

To account for the two-level blocking,
we introduce the space
\begin{equation}
  \label{eq:ltilde-star}
  \widetilde{l}^*_\infty
  \;\coloneqq\;
  \Bigl\{
    \mathbf{x}=(\tilde{\mathbf{x}}_j)_{j\in\mathbb{Z}}
    \in(\tilde{l}_\infty^d)^{\mathbb{Z} }
    \colon
    \lim_{|j|\to\infty}\norm{\tilde{x}_j}_\infty = 0
  \Bigr\},
\end{equation}
where $\norm{\tilde{x}_j}_\infty \coloneqq \norm{\widetilde{\mathcal{P}}(\tilde{\mathbf{x} })}_0$
is the norm on $\tilde{l}_\infty^d$ induced by $\mathcal{P}$.  Its
quotient $\widetilde{l}^*_\infty/{\sim}$ under the shift on
index is the space in which the transformed anchored tail process for
the induced system takes values. 

Assume there exist normalising sequences $(u_{n,i}(\tau_i))_{n\in\mathbb{N}}$
such that
\begin{equation}
  \label{eq:normalisation}
  n\,\mu\bigl(\overline{\psi}_i > u_{n,i}(\tau_i)\bigr)
  \;\longrightarrow\; \tau_i,
  \qquad\forall\,i=1,\dots,d.
\end{equation}
Write $\mathbb{u}_n=(u_{n,1},\dots,u_{n,d})$ and define the
normalised observation
\begin{equation}
  \label{eq:Fn}
  \mathbb{F}_n(x)
  \;\coloneqq\;
  \mathbb{u}_n^{-1}(\bbpsi(x))\odot\operatorname{sign}(\bbpsi(x))
  \;\in\;\mathbb{R}^d,
\end{equation}

Consider the sequence $(\mathbf{X}_j)_{j\ge 0}$ given by
$\mathbf{X}_j=\bbpsi(\overline{T}^j(x))$.  For each
induced step $j$, the contribution of the entire induced excursion
starting at $\overline{T}^j(x)$ is encoded as the element
\begin{equation}
  \label{eq:Xtilde}
  \widetilde{\mathbf{X}}_j
  \;\coloneqq\;
  \tilde{\pi}\!\left(
    \mathbb{F}_n\!\left(\overline{T}^j(x)\right),\,
    \mathbb{F}_n\!\left(T\!\left(\overline{T}^j(x)\right)\right),\,
    \dots,\,
    \mathbb{F}_n\!\left(T^{R(\overline{T}^j(x))-1}\!\left(\overline{T}^j(x)\right)\right)
  \right)
  \;\in\;\tilde{l}_\infty^d,
\end{equation}
where $\tilde{\pi}$ is the natural projection from $l_\infty^d$ to
$\tilde{l}_\infty^d$.  

In particular, for the definition of the transformed  anchored tail process, we assume the existence of a process \((\tilde{\mathbf{Y} }_j)_{j \in \mathbb{Z} } \in l_\infty ^*\) satisfying 

\[
	      \mathcal{L}\left(\frac{1}{\tau} \left(\tilde{\mathbf{X}} _{r_n+s}, \dots,  \tilde{\mathbf{X}} _{r_n+t}\right)\ \middle\vert \ \norm{\mathcal{P} (\tilde{\mathbf{X}}_{r_n})}_0 > \tau^{-1} \right) \to \mathcal{L}((\tilde{\mathbf{Y}} _j)_{j = s,\dots t})
\]
for all \(\tau>0, s<t, s,t \in \mathbb{Z}\). 

The law of the transformed anchored tail process for the induced system is then defined as 
\[
	\mathcal{L} (\tilde{\mathbf{Z}}_j) = \mathcal{L} \left(\tilde{\mathbf{Y}}_j\ \mid \ \inf_{j \leq -1} \norm{\mathcal{P} (\tilde{\mathbf{Y}}_j)}_\infty > 1 \right)
\]
and its spectral decomposition is given by 
\[
	L_ \mathbf{Z} = (\sup_j \norm{\mathcal{P} (\mathbf{Z}_j)} )^{-1}, \qquad  \tilde{\mathbf{K}} = \frac{\tilde{\mathbf{Z} }}{L_ \mathbf{Z}} 
\]

With this definition we can prove that, if the induced map satisfies conditions $\mathcal{D}$ and $\mathcal{D}'$, then the point process of exceedances for the induced system converges to a Poisson point process with intensity measure given by the tail measure of the transformed anchored tail process for the induced system (~\cite[Proposition 4.8]{freitas2025enrichedfunctionallimittheorems}), and moreover once we define the spectral measure for the induced system, then we recover~\ref{cor:proj} so that it is still possible to recover it as the projection of the tail measure of the transformed anchored tail process for the induced system. Since we need to perform a two level blocking, then the projection will be 
\[
  Q
  \;=\;
  \mathcal{L} \left(\frac{\min_{j, k \in \mathbb{Z} }\pi_1(\widetilde{\mathbf{K} }_{j,k}), \dots, \min_{j, k \in \mathbb{Z} }\pi_d(\widetilde{\mathbf{K} }_{j,k})}
       {\min_{i = 1,\dots d} \min_{j,k}\pi_{i}(\widetilde{\mathbf{K} }_{j,k})}\right),
\]

Let $S_j\coloneqq\sum_{i=1}^{j}R(\overline{T}^i(x))$ denote the
cumulative return time up to induced step~$j$ (with $S_0=0$).  For
each $j=1,\dots,r_n$ define the vector of highest normalized
observations across each component within the $j$-th induced period, which equivalently corresponds to the componentwise minimum of the $j$-th block of frequencies 
\begin{equation}
  \label{eq:mn-j}
  (m_n)_j
  \;\coloneqq\;
  \left(
    \bigwedge_{s=S_{j-1}+1}^{S_j} u_{n,1}^{-1}(|X_{s,1}|),
    \;\dots,\;
    \bigwedge_{s=S_{j-1}+1}^{S_j} u_{n,d}^{-1}(|X_{s,d}|)
  \right)
  \;\in\;\mathbb{R}^d.
\end{equation} 

The radial variable for the entire block of $r_n$ induced observations
is defined by two successive minimization, first over components
(within each induction), then over induced periods (within the block):
\begin{equation}
  \label{eq:rho-n}
  \rho_{n,j}
  \;\coloneqq\;
  \min_{k=1,\dots,d}(m_n)_{j,k},
  \qquad
  \rho_n
  \;\coloneqq\;
  \min_{j=1,\dots,r_n}\rho_{n,j}.
\end{equation}
The event $\{\rho_n<1\}$ thus means that at least one component
exceeds the threshold in at least one of the $r_n$ induced periods,
which is the correct induced analogue of the uninduced exceedance event.
The angular variable is then
\begin{equation}
  \label{eq:omega-n}
  \omega_{n,k}
  \;\coloneqq\;
  \frac{\displaystyle\min_{j=1,\dots,r_n}(m_n)_{j,k}}{\rho_n},
  \quad k=1,\dots,d,
  \qquad
  \omega_n=(\omega_{n,1},\dots,\omega_{n,d})\in\Sigma_d.
\end{equation}

Under conditions $\mathcal{D}$, $\mathcal{D}'$ for the induced
map~$\overline{T}$, we can prove that 
\begin{equation}
  \lim_{n\to\infty}k_n\,\mu\!\left(\mathbf{X}_j\in
    \mathscr{W}_{r_n}^c(U_n(\mathbf{\bbtau}))\right)
  \;\to\;
  \overline{\vartheta}(\bbtau)\,\hat{\Gamma}(\bbtau) = \Gamma (\overline{\bbtheta} \bbtau),
\end{equation}
where $\overline{\vartheta}(\bbtau)$ is the extremal Index of the
induced system. 
Then we can define the spectral measure for the induced sysis the Extremal tem as the limit of the rescaled measure of the exceedance event with a given angular component as before
\begin{equation}
  \label{eq:spectral-measure}
  H(B)
  \;\coloneqq\;
  \lim_{n\to\infty}
  k_n\,\mu\!\bigl\{\rho_n < 1,\;\omega_n\in B\bigr\},
  \qquad B\subseteq\Sigma_d\text{ Borel.}
\end{equation}
The total mass satisfies
\begin{equation}
  H(\Sigma_d)
  \;=\;
  \lim_{n\to\infty}k_n\,\mu(\rho_n<1)
  \;=\;
  \lim_{n\to\infty}k_n\,\mu\!\left(\mathbf{X}_j\in
    \mathscr{W}_{r_n}^c(U_n(\mathbf{1}))\right)
  \;\longrightarrow\;
  \Gamma (\overline{\bbtheta}),
\end{equation}

\begin{example}
Consider the LSV map $T_\gamma$ 
\begin{equation}
    T_\gamma(x) \coloneqq
    \begin{cases}
        x(1 + 2^\gamma x^\gamma) & \text{if } x \in [0, 1/2), \\
        2x - 1 & \text{if } x \in [1/2, 1].
    \end{cases}
\end{equation}
with $\gamma\in(0,1)$, equipped with
its absolutely continuous invariant measure $\mu_\gamma$.  Take the
canonical inducing domain $\Delta_0=(1/2,1]$ and denote by $R$ the
first return time to $\Delta_0$.  Let $\{\zeta_1,\zeta_2\}$ be a
period-2 orbit of $T_\gamma$ with $\zeta_1\in\Delta_0$ and
$\zeta_2=T_\gamma(\zeta_1)\notin\Delta_0$. The induced map $\overline{T}_\gamma$ is a Rychlik map with
exponential decay of correlations, so conditions $\mathcal{D}$,
$\mathcal{D}^{\prime} $ hold.

Consider the two observables
\begin{equation}
  \phi_1(x)=|x-\zeta_1|^{-1/\alpha},
  \qquad
  \phi_2(x)=|x-\zeta_2|^{-1/\alpha},
  \qquad 0<\alpha<1.
\end{equation}

Consider \(\bbtau^* = (\tau ,\tau )\), then since 
\[
	\frac{\mu (U_{n,i}(\tau))}{\mu (U_n(\tau ))} = \frac{1}{2}
\] 
then the process \((\tilde{\mathbf{K}}_j)_{j \in \mathbb{Z} }\) is given by  
\begin{equation*}
    (\tilde{\mathbf{K}}_j)_{j \in \mathbb{Z}} = (\dots \infty,\ \infty,\ \tilde{K}_0(E),\ \tilde{K}_1(E),\ \tilde{K}_2(E), \dots )
\end{equation*}
where
\begin{align*}
    \tilde{\mathbf{K}}_0(1) &= \begin{pmatrix}
        \ldots & \infty & \infty & 1         & \infty    & \infty & \ldots \\
        \ldots & \infty & \infty & \infty    & \chi_1    & \infty & \ldots
    \end{pmatrix} \\[10pt]
    \tilde{\mathbf{K}}_j(1) &= \begin{pmatrix}
        \ldots & \infty & \infty & \infty              & \chi_2^j\chi_1^{j+1} & \infty & \ldots \\
        \ldots & \infty & \infty & \chi_2^j\chi_1^j    & \infty               & \infty & \ldots
    \end{pmatrix} \\[10pt]
    \tilde{\mathbf{K}}_0(2) &= \begin{pmatrix}
        \ldots & \infty & \infty & \infty & \infty & \ldots \\
        \ldots & \infty & 1      & \infty & \infty & \ldots
    \end{pmatrix} \\[10pt]
    \tilde{\mathbf{K}}_j(2) &= \begin{pmatrix}
        \ldots & \infty & \chi_2^j\chi_1^{j-1} & \infty              & \infty & \ldots \\
        \ldots & \infty & \infty               & \chi_2^j\chi_1^j    & \infty & \ldots
    \end{pmatrix}
\end{align*}
and \(E \sim \mathrm{Ber}(1/2) \) 
with $\chi_1 = DT_\gamma(\zeta_1)$ and $\chi_{2} = DT_\gamma(\zeta_2)$. The profile distribution is then given by 
\[
	Q = \frac{1}{2} \delta_{(1,\chi_1)} + \frac{1}{2}\delta_{(\chi_2,1)}
\]
\end{example}
\section{Consequences of the point process convergence}

The limiting REPP \(N_\infty \)  encodes a wealth of information, which can be extracted by projecting or restricting it to specific regions. We first show that \(N_\infty \)  recovers the classical univariate limiting point process obtained in an earlier work~\cite{freitas2025enrichedfunctionallimittheorems}. We then turn to a more genuinely multivariate viewpoint, deriving a limit for the multivariate exceedances point process introduced in~\cite{nandagopalan1994multivariate}, which keeps record not only of the number of exceedances but also which components contribute to each extreme observation. 

\subsection{Univariate point processes}
Once the convergence of the multivariate point process has been established, the classical univariate limit processes can be recovered by restricting the cluster measure to suitable subsets of the state space. For each component \(i = 1,\dots d\), consider the canonical projection
\[
	\Pi_i \colon l^d _{\infty } \to l_\infty
\]
on the \textit{i-th} component, which is continuous, hence by the continuous mapping theorem, the image under \(\Pi_i \) of \(N_\infty  \) satisfies  
\[
	(\mathbb{1} \times \Pi_i)(N_n) \rightarrow (\mathbb{1} \times \Pi_i)N_\infty 
\]
in \(\mathcal{N} ^\# _{\mathbb{R}^+ _0 \times l_\infty \setminus \{\infty \}}\) 
However, it is not immediately clear from this projection alone whether the resulting process coincides with the one obtained directly in the univariate setting. We must still verify that the associated cluster measure converges to the univariate limit given in~\cite{freitas2025enrichedfunctionallimittheorems}. To this end we introduce the family of sets which are used to check convergence of the cluster measure in the univariate case. These set are of the form \(A \in \mathscr{F} ^u \) where 
\[
\mathscr{F} ^u \coloneqq  \set{\set{\mathbf{x} \in l_\infty \colon x_j \in H_j, j=1,\dots N} \colon H_j \in \mathscr{F}_\mathcal{V},\ j=1,\dots N, N\in \mathbb{N}} 
\]
and \(\mathscr{F}_\mathcal{V} \) is the field generated by rectangles in \(\mathbb{R}^m \). 
We know that the (univariate) limiting process is 
\[
	\sum_{l=1}^{\infty} \delta_{\left( T_l,U_l \mathbf{K}^i _{l,j}\right)};
\]
where \(\left( T_l ,U_l \right)\) are points of a PRM of parameter \(\theta_i\), \(\mathbf{K}^i _{l,j}\) are i.i.d copies of the angular component of the (univariate) \textit{tail anchored transformed process} and the univariate cluster measure 
is given by 
\[
	\eta_i = \theta_i (\mathrm{Leb} \times \mu_{\mathbf{Q}})\circ h^i 
\] 
where we denoted with \(h^i\) the corresponding decomposition map, defined similarly to~\eqref{eq:decomposition-Z} but with \(\mathbf{K}^i\) instead of \(\mathbf{K}\). 
Finally, we want to show that for every \(A \in \mathscr{F} ^u\) we have
\[
	\lim_{n \to \infty} k_n \mu (\tilde{\pi} (\mathbb{X}_{n,1}) \in \Pi_i ^{-1} (\tilde{A})) = \theta_i \left(\mathrm{Leb} \times \mu_{\mathbf{\mathbf{K} }^i}\right) \circ h^i (\tilde{A}) \eqqcolon \eta_i (\tilde{A}), 
\]
for all bounded \(\tilde{A}\) such that \(\eta_i (\partial \tilde{A}) = 0\).
\begin{proposition}
	Under the assumptions used to define the transformed anchored tail process and condition \(\mathcal{D}^{\prime}\), we have for every \(i = 1, \dots d\)  
	\[
		((\Pi_i)_*\eta)(\tilde{A}) = \Gamma(\bbtheta)((\Pi_i)_*(\mathrm{Leb}\times \mu_{\textbf{K}})\circ h)(\tilde{A}) = \theta_i (\mathrm{Leb}  \times \mu _{\mathbf{K}^i })\circ h^i (\tilde{A}) = \eta_i(\tilde{A}) 
	\]
	where \(\mathbf{K}^i \) is the angular component of the \textit{i-th} univariate transformed anchored tail process.   
\end{proposition}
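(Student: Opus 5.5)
The plan is to identify both sides as limits of the same prelimit quantity. The univariate theory of~\cite{freitas2025enrichedfunctionallimittheorems} applies to the $i$-th component, and the multivariate one to the full vector; we then let uniqueness of weak$^\#$ limits do the work.

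\textbf{Multivariate side.} Fix $A\in\mathscr{F}^u$ with $\tilde{A}$ bounded and $\eta_i(\partial\tilde{A})=0$. The preimage $\Pi_i^{-1}(A)$ is again a cylinder set belonging to $\mathscr{F}$, obtained by taking $H_{j,k}=\mathbb{R}^m$ for $k\neq i$. Since $\Pi_i$ commutes with the shift, $\Pi_i^{-1}(\tilde{A})$ is the element of $\tilde{\mathscr{J}}$ associated with $\Pi_i^{-1}(A)$. Boundedness also transfers: if $\tilde A$ forces $\inf_j|x_{j,i}|<\tau$, then every point of $\Pi_i^{-1}(\tilde{A})$ lies in $\mathscr{W}^c_{r_n}(U(\bbtau^*))$. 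Proposition~\ref{prop:cluster_conv} then gives
\[
k_n\mu\bigl(\tilde\pi(\mathbb{X}_{n,1})\in\Pi_i^{-1}(\tilde A)\bigr)\to\eta(\Pi_i^{-1}\tilde A)=\Gamma(\bbtheta)\bigl((\mathrm{Leb}\times\mu_{\tilde{\mathbf K}})\circ h\bigr)(\Pi_i^{-1}\tilde A),
\]
provided $\eta(\partial\Pi_i^{-1}\tilde A)=0$. This holds because $\partial\Pi_i^{-1}\tilde A\subseteq\Pi_i^{-1}\partial\tilde A$ by continuity of $\Pi_i$, so its $\eta$-measure is at most $\eta_i(\partial\tilde A)=0$ once the identity is known. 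To avoid circularity here, I would first establish the identity on a convergence-determining subclass of sets whose boundaries are null for both measures, and then extend it.

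\textbf{Univariate side.} The left-hand quantity equals $k_n\mu(\tilde\pi(\Pi_i\mathbb{X}_{n,1})\in\tilde A)$, and $\Pi_i\mathbb{X}_{n,1}$ is exactly the univariate normalized block of the $i$-th component. Two ingredients are needed to invoke the univariate cluster convergence of~\cite{freitas2025enrichedfunctionallimittheorems}, which would give the limit $\theta_i(\mathrm{Leb}\times\mu_{\mathbf K^i})\circ h^i(\tilde A)$. First, the univariate conditions $\mathcal{D}'$ must hold for component $i$. They follow from the multivariate ones: take $\bbtau=\tau e_i$ (with the other coordinates set to $0$) in $U_n(\bbtau)$, for which $\mathcal{D}'$ reduces to the univariate condition. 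Second, a univariate tail process $Y^i$ must exist. I would realize it as the law of $\Pi_i\mathbf{Y}$ conditioned on $|Y_{0,i}|<1$, rescaled by $\tau$. This is justified by noting that $\{u^{-1}_{n,i}(|X_{r_n,i}|)<\tau\}\subseteq\{\mathbb u_n^{-1}(\mathbf X_{r_n})\ngeq\bbtau^*\}$, so the univariate conditional law is a conditional law of the multivariate one on an event of asymptotically positive probability. Conditions (ii)--(iv) for $Y^i$ follow from those for $\mathbf Y$, using Lemma~\ref{lemma:uniformY0} and the independence in (ii).

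Since both sides are limits of the same sequence, they coincide. The normalization is consistent: taking $\tilde A=\tilde U(\tau e_i)$ together with Lemma~\ref{lemma:avoidance} gives total mass $\Gamma(\bbtheta e_i)=\theta_i$ by~\eqref{eq:g1}, matching the univariate intensity $\theta_i$. This consistency check serves as the sanity check for the constant in front.

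\textbf{Main obstacle.} I expect the hard step to be the identification of the univariate tail process as a conditioned projection of $\mathbf Y$. The multivariate conditioning uses the common threshold $\bbtau^*$, whereas the univariate one uses only component $i$. I would first try to prove the limit
\[
\mathcal L\bigl(\tau^{-1}\Pi_i\mathbb X_n^{r_n+s,r_n+t}\mid u^{-1}_{n,i}(|X_{r_n,i}|)<\tau\bigr)\to\mathcal L\bigl(\Pi_i(Y_j)_{s\le j\le t}\mid |Y_{0,i}|<1\bigr).
\]
The approach is to write the left side as the ratio of two probabilities conditioned on $\{\mathbb u_n^{-1}(\mathbf X_{r_n})\ngeq\bbtau^*\}$, and then apply (i) to both numerator and denominator. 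Continuity at the boundary $\{|Y_{0,i}|=1\}$ comes from Lemma~\ref{lemma:uniformY0} and (ii). The denominator is positive because $\tau_i$-exceedances have asymptotic proportion $1/\hat\Gamma(\mathbf 1)>0$ among the $\bbtau^*$-exceedances.
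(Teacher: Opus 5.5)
Your proposal is correct and takes essentially the paper's route. The paper also identifies $((\Pi_i)_*\eta)(\tilde A)$ with $\lim_n k_n\mu\bigl(\Pi_i(\tilde\pi(\mathbb{X}_{n,1}))\in\tilde A\bigr)$ and evaluates this limit with the univariate theory of \cite{freitas2025enrichedfunctionallimittheorems}: it writes the limit as the conditional law given $\mathscr{W}^c_{r_n}(U_{n,i}(\tau))$, which tends to $\eta_i(\tilde A)/(\theta_i\tau)$ by their Corollary 3.16, times the block rate, which tends to $\theta_i\tau$.

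Your checks of the univariate hypotheses (univariate $\mathcal{D}'$, the univariate tail process as $\Pi_i\mathbf{Y}$ conditioned on $\{|Y_{0,i}|<1\}$, null boundaries) are left implicit in the paper. The one imprecision is that (iv) for $Y^i$ does not follow from (iv) for $\mathbf{Y}$, since the first exceedance at time $0$ may occur only in a component $k\neq i$; it is instead equivalent to $\theta_i>0$, by the univariate analogue of Proposition~\ref{prop: tetaY}.
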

\begin{proof}
	Consider \(\tilde{A} \in \mathscr{F}^u \) bounded and such that \(\eta_i(\partial\tilde{A}) = 0\), since it is bounded there exists a \(\tau > 0\) such that for every \(\mathbf{x}  \in \tilde{A}\) we have \(\inf_{j \in \mathbb{Z} }\lvert x_j \rvert < \tau\). Hence, if \(\Pi_i (\tilde{\pi}(\mathbb{X}_{n,1})) \in \tilde{A}\) then \(\Pi_i(\mathbb{X}_{n,1}) \in \mathscr{W}^c _{r_n}(U_{n,i}(\tau) ) \), so writing \(\mathbb{A} = \tilde{\pi}^{-1} (\tilde{A})\) we have 
	\begin{align*}
		&\left((\Pi_i)_* \eta \right)(\tilde{A}) = \lim_{n \to \infty} k_n\mu \left(\Pi_i (\tilde{\pi }(\mathbb{X}_{n,1})) \in \tilde{A}\right) = \\
		&\lim_{n \to \infty}k_n\mu \left(\frac{\Pi_i(\mathbb{X}_{n,1})}{\tau } \in \tau^{-1}   \mathbb{A}, \Pi_i(\mathbb{X}_{n,1}) \in \mathscr{W} ^c _{r_n}(U_{n,i}(\tau))\right) = \\
		& \lim_{n  \to \infty} k_n \mu \left(\frac{\Pi_i(\mathbb{X}_{n,1})}{\tau } \in \tau^{-1}  \mathbb{A} \mid \Pi_i(\mathbb{X}_{n,1}) \in \mathscr{W} ^c _{r_n}(U_{n,i}(\tau))\right) \frac{\mu \left(\mathscr{W} ^c _{r_n}(U_{n,i}(\tau)) \right)}{\mu (\abs{X_{0,i}}  > u_{n,i}(\tau ) )} \mu (\abs{X_{0,i}} > u_{n,i}(\tau))  
	\end{align*} 
	the term \(k_n \mu \left(\frac{\Pi_i(\mathbb{X}_{n,1})}{\tau } \in \tau ^{-1} \mathbb{A} \mid \Pi_i(\mathbb{X}_{n,1}) \in \mathscr{W} ^c _{r_n}(U_n(\tau))\right)\) converges to \(\frac{\eta_i(\tilde{A})}{\theta_i \tau }\) from~\cite[corollary 3.16]{freitas2025enrichedfunctionallimittheorems}, and since the second term converges to \(\theta_i \tau \), the conclusion follows.         
\end{proof}
\begin{remark}
	Since Corollary~\ref{cor:proj} gives us an explicit connection between  \(\mathbf{Z} \) and the spectral measure, we can say more about \(\eta_i\). In particular if we define two elements \((q_j)_{j \in \mathbb{Z} }\) and \((k_j)_{j\in \mathbb{Z} }\) in \(l_\infty\) to be  \textit{angular-equivalent} if there is \(n \in \mathbb{Z} \) such that 
    \[
        \frac{1}{\inf(\abs{q_j})}\sigma^n(\mathbf{q} ) = \frac{1}{\inf (\abs{k_j})}(\mathbf{k} ),
    \]
	then it is easy to see that 
	\begin{align*}
        & ((\Pi_i)_*\eta)(\tilde{A}) = \Gamma (\bbtheta) \int_{\Sigma_d} \int_0 ^\tau \mu \left( s \textbf{K}^\omega \in \Pi_i ^{-1} (\mathbb{A}) \right)ds Q(d \omega) =\\  
        & \int_{\Sigma_d} \int_0 ^{\omega_{i}\tau }\mu \left( s\frac{\Pi_i(\mathbf{K} ^\omega)}{\omega_i} \in \mathbb{A}\right) ds \frac{H(d \omega)}{\omega_{i}}
    \end{align*}
	The first thing to notice is that 
    \[
    \int_0 ^{\omega_{i}\tau }\mu \left( s\frac{\Pi_i(\mathbf{K} ^\omega)}{\omega_i} \in \mathbb{A}\right) ds= \int_0 ^{\tau }\mu \left( s\frac{\Pi_i(\mathbf{K} ^\omega)}{\omega_i} \in \mathbb{A}\right) ds
    \]
    indeed \(\omega_{i} \geq 1\) for all \(\omega \in \Sigma_d\) and given that  \(\inf_j \abs{\Pi_i (\mathbf{K}^\omega)} = \omega _i \), when \(s > \tau \), \( s \Pi_i(\textbf{K}^m)\)  cannot be in \(\tilde{A}\).            
    Hence, we can write 
    \[
        ((\Pi_i)_*\eta)(\tilde{A}) = \theta_i \int_{\Sigma_d} \int_0 ^\tau \mu\left( s \frac{\Pi_i(\mathbf{K} ^\omega)}{\omega_i} \in \mathbb{A}  \right)ds \frac{H(d \omega)}{\theta_i \omega_i} 
    \]
	It is easy to see that \(g(\omega) \coloneqq \int_0 ^\tau \mu\left( s \frac{\Pi_i(\mathbf{K} ^\omega)}{\omega_i} \in \mathbb{A}  \right)ds\) is constant on each equivalence class, since \(\mathbb{A} \) is shift invariant, so from~\ref{lem:HG} the expression above can be seen as a mixture of measures with weights given by \(\frac{H(d \omega)}{\theta_i \omega_i} \). 
    If we know that we have a finite (or countable) number \(c\) of different equivalence classes, we consider the partition \(\Sigma_d = I_1 \cup \dots \cup  I_c\) into disjoint union of elements of each class, and we obtain 
    \[
        \eta_i (\tilde{A}) = \theta_i \sum_{l= 1}^c \left( \int_{I_l} \frac{H(d \omega)}{\theta _i\omega_{i}}\right)\int_0 ^\tau \mu \left( s \hat{\mathbf{K}^l} \in \mathbb{A}   \right)ds   
    \]
	where \(\hat{\mathbf{K}^l} \) is a (normalized) representative of the equivalent class of \(\frac{\Pi_i(\mathbf{K} ^\omega)}{\omega_i} \) for \(\omega \in I_l\).
\end{remark}

\subsection{Multivariate exceedance point process}
Fix $\bbtau\in[0,\infty)^d$. Following~\cite{nandagopalan1994multivariate}, we define the
\emph{multivariate exceedance point process} as the $\mathbb{Z}_+^d$-valued random measure on
$[0,1]$ that counts, for each cluster $i$, how many times each component of $\mathbb{X}_{n,i}$
exceeds the threshold $\bbtau$. Precisely, for $J\in\mathcal{B}([0,1])$,
\[
    N_n^{(\bbtau)}(J) = \sum_{i=1}^n \delta_{(i/k_n)}(J)\,\boldsymbol{\varepsilon}_i,
    \qquad
    \boldsymbol{\varepsilon}_i = \Big(
        \sum_{j\in \mathbb{Z}}\mathbbm{1}_{\{|\tilde\pi(\mathbb{X}_{n,i})_{j,1}|<\tau_1\}},
        \dots,
        \sum_{j\in \mathbb{Z}}\mathbbm{1}_{\{|\tilde\pi(\mathbb{X}_{n,i})_{j,d}|<\tau_d\}}
    \Big),
\]
To identify the limit, we encode each cluster $x\in\tilde{l}^d_\infty$ by its vector of
exceedance counts. Define the \emph{multiplicity map}
$m\colon \tilde{l}_\infty^d \to \mathbb{Z}_+^d$ componentwise by
\[
    m_k(x) \coloneqq \sum_{j\in\mathbb{Z}}\mathbbm{1}_{\{|\pi_k(x_j)|<\tau_k\}},
    \qquad k=1,\dots,d,
\]
so that $\boldsymbol{\varepsilon}_i = m\big(\tilde\pi(\mathbb{X}_{n,i})\big)$, and lift it to point
processes by
\[
    \Phi\Big(\sum_i\delta_{(t_i,x_i)}\Big) = \sum_{i\,:\,m(x_i)\neq\mathbf{0}} \delta_{(t_i,\,m(x_i))},
\]
regarded as a map
$\mathcal{N}^\#_{\mathbb{R}^0_+\times\tilde{l}^d_\infty\setminus\{\infty\}}
\to \mathcal{N}_{\mathbb{R}^0_+\times(\mathbb{Z}_+^d\setminus\{\mathbf{0}\})}$.
Discarding the atoms with zero multiplicity is what guarantees local finiteness of the image. With
$N_n = \sum_i \delta_{(i/k_n,\,\tilde\pi(\mathbb{X}_{n,i}))}$ one has
\[
    N_n^{(\bbtau)}(J)
    = \int_{J\times(\mathbb{Z}_+^d\setminus\{\mathbf{0}\})} \mathbf{y}\,\Phi(N_n)(dt,d\mathbf{y}).
\]
The map $m$ is discontinuous at $x$ only when some coordinate $|\pi_k(x_j)|$ equals $\tau_k$
exactly. Under $N_\infty = \sum_i\delta_{(T_i,U_i\tilde{\mathbf{K}}_i)}$ this event has probability
zero, i.e.\ $\mu(N_\infty\in D_\Phi)=0$, so the continuous mapping theorem gives
\[
    \Phi(N_n)\;\longrightarrow\;\Phi(N_\infty),
    \qquad\text{hence}\qquad
    N_n^{(\bbtau)}\;\longrightarrow\; N_\infty^{(\bbtau)},
\]
where
\[
    N_\infty^{(\bbtau)}(J)
    = \int_{J\times(\mathbb{Z}_+^d\setminus\{\mathbf{0}\})} \mathbf{y}\,\Phi(N_\infty)(dt,d\mathbf{y})
    = \sum_{T_i\in J}\mathbf{m}_i,
    \qquad \mathbf{m}_i = m(U_i\tilde{\mathbf{K}}_i).
\]
Since $(T_i, U_i\tilde{\mathbf{K}}_i)_i$ is Poisson, $\Phi(N_\infty)=\sum_i\delta_{(T_i,\mathbf{m}_i)}$
is a PRM on $\mathbb{R}^0_+\times(\mathbb{Z}_+^d\setminus\{\mathbf{0}\})$ with intensity
$\mathrm{Leb}\times\lambda$, where
$\lambda(\cdot)=\Gamma(\bbtheta)\,\mu\big(m(U\tilde{\mathbf{K}})\in\cdot\bigm| m(U\tilde{\mathbf{K}})\neq\mathbf{0}\big)\,
\mu(m(U\tilde{\mathbf{K}})\neq\mathbf{0})$; equivalently, its total mass is
\[
    \Lambda
    = \Gamma(\bbtheta)\,\mu\big(m(U\tilde{\mathbf{K}})\neq\mathbf{0}\big)
    = \Gamma(\bbtheta)\int_{\Sigma_d}\!\int_0^\infty
        \mathbbm{1}_{\{sK^\omega\,\not\ge\,\bbtau\}}\,ds\,Q(d\omega)
    = \int_{\Sigma_d}\max_k\Big(\tfrac{\tau_k}{\omega_k}\Big)\,H(d\omega)
    = \Gamma(\bbtheta\bbtau),
\]
where we used $\inf_{j}|\pi_k(K^\omega_j)|=\omega_k$ (as in Corollary~\ref{cor:proj}),
$H=\Gamma(\bbtheta)\,Q$, and the spectral representation~\eqref{eq:spectral-representation}.
Setting
\[
    \hat{m}(\cdot)
    \coloneqq \mu\big(m(U\tilde{\mathbf{K}})\in\cdot \bigm| m(U\tilde{\mathbf{K}})\neq\mathbf{0}\big),
\]
its Laplace transform takes the Compound Poisson form: for $\mathbf{s}\in\mathbb{R}_+^d$,
\[
    \mathbb{E}\Big(e^{-\mathbf{s}'N_\infty^{(\bbtau)}(J)}\Big)
    = \exp\!\Big(
        -|J|\,\Gamma(\bbtheta\bbtau)
        \Big(1-\!\!\sum_{\mathbf{y}\in\mathbb{Z}_+^d\setminus\{\mathbf 0\}}\!\!
        e^{-\mathbf{s}'\mathbf{y}}\,\hat{m}(\mathbf{y})\Big)
    \Big).
\]
This recovers the Compound Poisson characterisation of~\cite{nandagopalan1994multivariate} with intensity $\Gamma(\bbtheta\bbtau)$
and multiplicity distribution given by $\hat m$.

\nocite{*}
\printbibliography

@article{aimino2026multivariate,
  doi = {10.1088/1361-6544/ae36db}, 
  url = {https://doi.org/10.1088/1361-6544/ae36db},
  year = {2026},
  publisher = {IOP Publishing},
  volume = {39},
  number = {1},
  pages = {015023},
  author = {Aimino, Romain and Cristina Moreira Freitas, Ana and Milhazes Freitas, Jorge and Todd, Mike},
  title = {Multivariate extreme values for dynamical systems},
  journal = {Nonlinearity},
}

@book{beirlant2006statistics,
  title     = {Statistics of extremes: theory and applications},
  author    = {Beirlant, Jan and Goegebeur, Yuri and
               Segers, Johan and Teugels, Jozef L.},
  year      = {2004},
  publisher = {John Wiley \& Sons}
}

@article{basrak2021compound,
  title     = {Compound Poisson approximation for regularly varying fields with application to sequence alignment},
  author    = {Basrak, Bojan and Planini{\'c}, Hrvoje},
  journal   = {Bernoulli},
  volume    = {27},
  number    = {2},
  pages     = {1371--1408},
  year      = {2021},
  doi       = {10.3150/20-BEJ1278}
}

@article{basrak2018invariance,
  title     = {An invariance principle for sums and record times of regularly varying stationary sequences},
  author    = {Basrak, Bojan and Planini{\'c}, Hrvoje and
               Soulier, Philippe},
  journal   = {Probability Theory and Related Fields},
  volume    = {172},
  number    = {3--4},
  pages     = {869--914},
  year      = {2018},
  publisher = {Springer},
  doi       = {10.1007/s00440-017-0822-9}
}

@article{de1977limit,
  title     = {Limit theory for multivariate sample extremes},
  author    = {De Haan, Laurens and Resnick, Sidney I.},
  journal   = {Zeitschrift f{\"u}r Wahrscheinlichkeitstheorie und verwandte Gebiete},
  volume    = {40},
  pages     = {317--337},
  year      = {1977},
  publisher = {Springer}
}

@article{freitas2020complete,
  author  = {Freitas, Ana Cristina Moreira and
             Freitas, Jorge Milhazes and
             Magalh{\~a}es, M{\'a}rio},
  title   = {Complete convergence and records for dynamically
             generated stochastic processes},
  journal = {Transactions of the American Mathematical Society},
  volume  = {373},
  number  = {1},
  year    = {2020},
  pages   = {435--478},
  mrnumber = {4042881}
}

@article{freitas2025enrichedfunctionallimittheorems,
  author  = { Freitas, Ana Cristina Moreira and
             Freitas, Jorge Milhazes and
             Todd, Mike},
  title   = {Enriched functional limit theorems for dynamical systems},
  journal = {Annali della Scuola Normale Superiore di Pisa - Classe di Scienze},
  year    = {2025},
  volume  = {59},
  doi     = {10.2422/2036-2145.202401_016}
}

@book{kulik2020heavy,
  title     = {Heavy-tailed time series},
  author    = {Kulik, Rafal and Soulier, Philippe},
  year      = {2020},
  publisher = {Springer}
}

@book{lucarini2016extremes,
  title     = {Extremes and recurrence in dynamical systems},
  author    = {Lucarini, Valerio and
               Faranda, Davide and
               Freitas, Ana Cristina Moreira and
               Freitas, Jorge Milhazes and
               Holland, Mark and
               Kuna, Tobias and
               Nicol, Matthew and
               Todd, Mike and
               Vaienti, Sandro},
  series    = {Pure and Applied Mathematics: A Wiley Series of Texts, Monographs and Tracts},
  publisher = {Wiley},
  address   = {Hoboken, NJ},
  year      = {2016}
}

@article{nandagopalan1994multivariate,
  title   = {On the multivariate extremal index},
  author  = {Nandagopalan, S.},
  journal = {Journal of Research of the National Institute of Standards and Technology},
  volume  = {99},
  year    = {1994},
  pages   = {543--550}
}

@article{segers2012max,
  author    = {Segers, Johan},
  title     = {Max-stable models for extremal dependence},
  journal   = {REVSTAT -- Statistical Journal},
  volume    = {10},
  number    = {1},
  pages     = {61--82},
  year      = {2012}
}

@article{pedersen2018tail,
  title     = {On the tail behavior of a class of multivariate conditionally heteroskedastic processes},
  author    = {Pedersen, Rasmus S. and Wintenberger, Olivier},
  journal   = {Extremes},
  volume    = {21},
  number    = {2},
  pages     = {261--284},
  year      = {2018},
  publisher = {Springer},
  doi       = {10.1007/s10687-017-0307-3}
}

@book{resnick1987extreme,
  title     = {Extreme values, regular variation and point processes},
  author    = {Resnick, Sidney I. and others},
  volume    = {4},
  year      = {1987},
  publisher = {Springer}
}

@misc{kirsebom2026visitingtimestatistics,
  title         = {Visiting time statistics},
  author        = {Kirsebom, Maxim and Kunde, Philipp and
                   Persson, Tomas},
  year          = {2026},
  eprint        = {2608.01453},
  archivePrefix = {arXiv},
  primaryClass  = {math.DS},
  url           = {https://arxiv.org/abs/2608.01453}
}

@article{pene2020spatio,
  title     = {Spatio-temporal Poisson processes for visits to small sets},
  author    = {P{\`e}ne, Fran{\c{c}}oise and Saussol, Beno{\^i}t},
  journal   = {Israel Journal of Mathematics},
  volume    = {240},
  pages     = {625--665},
  year      = {2020},
  publisher = {Springer}
}

@article{zweimuller2022hitting,
  title   = {Hitting times and positions in rare events},
  author  = {Zweim{\"u}ller, Roland},
  journal = {Annales Henri Lebesgue},
  volume  = {5},
  pages   = {1361--1415},
  year    = {2022}
}

@article{faranda2018correlation,
  title     = {Correlation dimension and phase space contraction via extreme value theory},
  author    = {Faranda, Davide and Vaienti, Sandro},
  journal   = {Chaos: An Interdisciplinary Journal of Nonlinear Science},
  volume    = {28},
  number    = {4},
  year      = {2018},
  publisher = {AIP Publishing}
}

@article{bansard2026fractional,
  title     = {The fractional Poisson process and other limit point processes for rare events in infinite ergodic theory},
  author    = {Bansard-Tresse, Dylan},
  journal   = {Advances in Mathematics},
  volume    = {492},
  pages     = {110916},
  year      = {2026},
  publisher = {Elsevier},
  doi       = {10.1016/j.aim.2026.110916}
}
\end{document}